\newtheorem{defi}{Definition}[section]
\newtheorem{theo}[defi]{Theorem}
\newtheorem{lemm}[defi]{Lemma}
\newtheorem{prop}[defi]{Proposition}
\newtheorem{co}[defi]{Corollary}
\newtheorem{rem}[defi]{Remark}
\newcommand{\cals}{\mathcal{S}}
\newcommand{\R}{\mathbb{R}}
\newcommand{\Z}{\mathbb{Z}}
\newcommand{\T}{\mathbb{T}}
\newcommand{\N}{\mathbb{N}}
\newcommand{\Q}{\mathbb{Q}}
\newcommand{\per}{\mathrm{per}}
\newcommand{\calm}{\mathcal{M}}
\newcommand{\xkr}{X_{k,\rho}}
\newcommand{\tilj}{\tilde{J}}
\title{Variational Structures for Infinite Transition Orbits \\of Monotone Twist Maps}
\author{Yuika Kajihara}
\date{\today}
\begin{document}
\maketitle

\begin{abstract}
In this paper, we consider chaotic dynamics and variational structures of area-preserving maps.
There is a lot of study on dynamics of their maps and 
the works of Poincare and Birkhoff are well-known.
To consider variational structures of area-preserving maps,  we define a special class of area-preserving maps called {\it monotone twist maps}.
Variational structures determined from twist maps can be used for constructing characteristic trajectories of twist maps.
Our goal is to define the variational structure such as giving {\it infinite transition orbits} through minimizing methods.
\end{abstract}

\section{Introduction}
\label{section:intro}
In this paper, we consider  chaotic dynamics and variational structures of area-preserving
maps.
The dynamics of such maps have been widely studied, with key findings by  Poincar\'e and Birkhoff.
To explore these variational structures, 
we define a special class of  area-preserving maps called {\it monotone twist maps}:

\begin{defi}[monotone twist maps]
Set a map $f \colon \T \times \R \to \T \times \R$ and assume that $f \in C^1$ and
a lift $\tilde{f}$ of $f$ $\colon \R \times \R \to  \R \times \R$, $(x,y) \mapsto (X,Y)$ satisfy the followings:
\begin{itemize}
  \item[($f_1$)] $\tilde{f}$ is  area-preserving, i.e., $dx \wedge dy = dX \wedge dY$, and
  \item[($f_2$)] $\partial X / \partial y>0$ (twist condition).
\end{itemize}
Then $f$ is said to be a monotone twist map.
\end{defi}
By Poincar\'e's lemma, we get a generating function $h$ for a monotone twist map $f$
and  it satisfies
$
dh=YdX -ydx.
$
That is,
\[
y= \partial_1 h(x,X), \
Y= -\partial_2 h(x,X).
\]
For the above $h$, by abuse of notation, we define $h \colon \R^{n+1} \to \R$ by:
\begin{align}
\label{action_n}
h(x_0,x_1, \cdots, x_n)=\sum_{i=1}^{n} h(x_i,x_{i+1}).
\end{align}
We can regard $h$ as a variational structure associated with $f$, because any critical point of $\eqref{action_n}$, say $(x_0,\cdots,x_n)$, gives us an orbit of $\tilde{f}$ by $y_i=-\partial h_1(x_i,x_{i+1})=\partial h_2(x_{i-1},x_{i})$.
This is known as the Aubry-Mather theory, which is so called because
Aubry studied critical points of the action $h$ in \cite{AubrDaer1983} and Mather developed the idea (e.g. \cite{Mather1982, Mather1987}).
We briefly summarize Bangert's investigation of good conditions of $h$ for study in minimal sets \cite{Bangert1988}.

We consider the space of bi-infinite sequences of real numbers and define convergence of a sequence $x^n \in \R^\Z$  to $x \in \R^\Z$ by:
\begin{align}
\label{t_metric}
\lim_{n \to \infty} |x^n_i - x_i| = 0 \ ({}^{\forall} i \in \Z).
\end{align}
Assume that a Lipschitz continuous map $h \colon \R^2  \to \R$ satisfies $(h_1)$-$(h_4)$, where
they are given by the followings:
\begin{itemize}
  \item[($h_1$)] For all $(\xi,\eta) \in \R^2$, $h(\xi,\eta)=h(\xi+1,\eta+1)$;
  \item[($h_2$)] $\displaystyle{\lim_{\eta \to \infty} h(\xi,\xi+\eta) \rightrightarrows \infty}$;
  \item[($h_3$)] If $\underline{\xi}< \bar{\xi}$ and $\underline{\eta}< \bar{\eta}$, then
 $ h(\underline{\xi},\underline{\eta}) + h(\bar{\xi},\bar{\eta}) <   h(\underline{\xi},\bar{\eta}) + h(\bar{\xi},\underline{\eta}) $; and
  \item[($h_4$)] If $(x,x_0,x_1)$ and $(\xi,x_0,\xi_1)$ are minimal and $(x,x_0,x_1) \neq (\xi,x_0,\xi_1)$, then
  $(x-\xi)(x_1-\xi_1)<0$.
\end{itemize}

\begin{defi}[minimal configuration/stationary configuration]
A finite configuration $x=(x_i)_{n \le i \le m}$ is said to be minimal if,
for any finite configuration $\{y_i\}_{i=n_0}^{n_1}$ with $y_{n_0}=x_{n_0}$ and $y_{n_1}=x_{n_1}$,
\[
h(x_{n_0},x_{n_0+1}, \cdots, x_{n_1-1}, x_{n_1}) \le h(y_{n_0},y_{n_0+1}, \cdots, y_{n_1-1}, y_{n_1}),
\]
where  $n \le n_0 < n_1 \le m$.
An infinite configuration $x=(x_i)_{i \in \Z}$ is called minimal if, for any $n<m$, we have $x=(x_i)_{i=n}^{m}$ is minimal.
Moreover, a configuration $x$ is called locally minimal or a stationary configuration if for any $i \in \Z$,
it holds that
$
\partial_2 h (x_{i-1},x_i) + \partial_1h(x_i,x_{i+1})=0.
$
\end{defi}

For $x=(x_i) \in \R^\Z$, we define:
\[
\alpha^+(x) :=\lim_{i \to \infty} \frac{x_i}{i},\
\alpha^-(x) :=\lim_{i \to -\infty} \frac{x_i}{i}.
\]
\begin{defi}[rotation number]
If both $\alpha^+(x)$ and $\alpha^-$(x) exist and $\alpha^+(x)=\alpha^-(x)(=:\alpha(x))$, 
then we call $\alpha(x)$ a rotation number of $x$.
\end{defi}

Let $\calm_\alpha$ be a minimal set consisting of minimal configurations with rotation number $\alpha$.
It is known that for any $\alpha \in \R$, the set $\calm_{\alpha}$ is non-empty and compact (see  \cite{Bangert1988} for the proof).

\begin{defi}[periodic orbits]
\label{def:periodic}
A configuration $ x=(x_i)$ is said to be  $(q,p)$-periodic
if $x=(x_i) \in \R^\Z$ satisfies
\[
x_{i+q}=x_{i} + p,
\]
for any $i \in \Z$.
\end{defi}
It is easily seen that if $x$ is $(q,p)$-periodic, then its rotation number is $p/q$.
This chapter discusses the case where $\alpha \in \Q$.
For $\alpha=p/q \in \Q$, we set:
\[
\calm_{\alpha}^\per := \{ x \in \calm_{\alpha} \mid  \text{$x$ is $(q,p)$-periodic}\} \cap \calm_{\alpha}.
\]

\begin{defi}[neighboring]
A pair of $(q,p)$-periodic minimal configurations $x^0,x^1$ with $x^0 \neq x^1$
is called neighboring if there is no other $x \in \calm_{\alpha}^\per$ with $x^0 < x<x^1$
\end{defi}

Given a neighboring pair $(x^0,x^1)$, we define:
\begin{align*}
\calm_{\alpha}^+(x^0,x^1)&=\{x \in \calm_{\alpha} \mid |x_i - x^0| \to 0 \ (i \to -\infty) \ and \ |x_i - x^1| \to 0 \ (i \to \infty) \} \ and \\
\calm_{\alpha}^-(x^0,x^1)&=\{x \in \calm_{\alpha} \mid |x_i - x^0| \to 0 \ (i \to \infty) \ and \ |x_i - x^1| \to 0 \ (i \to -\infty) \}.
\end{align*}
Bangert \cite{Bangert1988} showed the following proposition.
\begin{prop}
Given $\alpha \in \Q$, $\calm_{\alpha}^\per$ is nonempty.
Moreover, 
if $\calm_\alpha$ has a neighboring pair $(x^0,x^1)$, 
then $\calm_{\alpha}^+$ and $ \calm_{\alpha}^-$ are nonempty.
\end{prop}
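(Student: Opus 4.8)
The plan is to establish both assertions by minimization — a periodic action for the first and a renormalized relative action for the second. For the first assertion, note that a $(q,p)$-periodic configuration is determined by the finite vector $(x_0,\dots,x_{q-1})$ via $x_{i+q}=x_i+p$, so I would minimize the periodic action
\[
W(x_0,\dots,x_{q-1})=\sum_{i=0}^{q-1}h(x_i,x_{i+1}),\qquad x_q=x_0+p .
\]
By $(h_1)$ the functional $W$ is invariant under the diagonal shift $(x_0,\dots,x_{q-1})\mapsto(x_0+1,\dots,x_{q-1}+1)$, so after quotienting by this $\Z$-action the problem reduces to the remaining relative coordinates; condition $(h_2)$, together with the lower bounds coming from Lipschitz continuity and $(h_1)$, provides the coercivity that prevents a minimizing sequence from developing unbounded gaps. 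Hence a minimizer $x^\ast$ exists. The remaining task is to promote $x^\ast$ from a merely periodic minimizer to an element of $\calm_\alpha$, i.e.\ a configuration minimal against all finite variations; for this I would invoke the Aubry crossing lemma, whose engine is the strict inequality $(h_3)$: if any competitor crossed $x^\ast$, one could strictly lower the action by uncrossing, contradicting minimality. This yields $x^\ast\in\calm_\alpha^\per$, so the set is nonempty.

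For the second assertion, fix a neighboring pair $(x^0,x^1)$ with $x^0<x^1$. Since both are minimizers of $W$ we have $W(x^0)=W(x^1)$, and this equality is exactly what makes the relative action
\[
\mathcal{F}(x)=\sum_{i\in\Z}\bigl(h(x_i,x_{i+1})-h(x^0_i,x^0_{i+1})\bigr)
\]
finite on the admissible class of configurations with $x^0\le x\le x^1$ that are asymptotic to $x^0$ at $-\infty$ and to $x^1$ at $+\infty$: the left tail is summable, while over each full period the right tail contributes $W(x^1)-W(x^0)=0$, so the partial sums converge. I would minimize $\mathcal F$ over this class. The key structural feature is that $\mathcal F$ is invariant under the lattice translation $i\mapsto i+q$ with value shift $+p$ (again from $(h_1)$ and periodicity of $x^0,x^1$); this invariance lets me normalize a minimizing sequence so that its transition from the $x^0$-level to the $x^1$-level is anchored in a fixed window of indices, rather than sliding off to $\pm\infty$.

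With the normalization fixed, compactness follows from a diagonal extraction in the topology \eqref{t_metric}, coercivity controlling each coordinate, and the limit $x^\ast$ satisfies $x^0\le x^\ast\le x^1$ and minimizes $\mathcal F$ by lower semicontinuity. That $x^\ast$ is itself a minimal configuration follows from the additivity of $h$ and the same crossing lemma as before. It remains to verify the asymptotics, and here the neighboring hypothesis does the decisive work: the ordering property $(h_4)$ forces two distinct minimal configurations of the same rotation number to cross at most once, so $x^\ast$ must approach a periodic minimal configuration at each end; since no element of $\calm_\alpha^\per$ lies strictly between $x^0$ and $x^1$, the only possible limits are $x^0$ and $x^1$, and the anchoring normalization (inherited by $x^\ast$) excludes the degenerate cases in which $x^\ast$ limits onto the same orbit at both ends. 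Thus $x^\ast\in\calm_\alpha^+(x^0,x^1)$, and the construction of an element of $\calm_\alpha^-(x^0,x^1)$ is identical after interchanging the roles of $+\infty$ and $-\infty$.

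I expect the main obstacle to be precisely this last step: ensuring the minimizer is a genuine transition rather than collapsing onto a single periodic orbit. The tension is that the translation invariance of $\mathcal F$ is indispensable for compactness yet simultaneously allows the transition to escape to infinity; reconciling this through a normalization that both survives the limit and is compatible with the $\Z$-lattice structure, and then converting the single-crossing property $(h_4)$ into the statement that $x^\ast$ is asymptotic to $x^0$ and to $x^1$ specifically, is the technical heart of the argument.
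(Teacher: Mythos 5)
The paper does not actually prove this proposition: it is quoted from Bangert \cite{Bangert1988}, and the only related material in the text is the summary in Section \ref{sec:pre} of Yu's re-derivation of the heteroclinic part by minimizing the renormalized action $I$ over $X^0$ and $X^1$. Your sketch follows exactly that standard route --- minimize the periodic action $W$ to populate $\calm_{\alpha}^\per$, then minimize a relative action over ordered configurations asymptotic to the neighboring pair, using the crossing consequences of $(h_3)$--$(h_4)$ to upgrade minimizers and to pin down the asymptotics --- so in substance it coincides with the argument the paper is implicitly relying on rather than departing from it.

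Two steps in your write-up would fail as literally stated and need the standard repairs. First, with your renormalization the series $\mathcal{F}(x)=\sum_i\bigl(h(x_i,x_{i+1})-h(x^0_i,x^0_{i+1})\bigr)$ is \emph{not} termwise small at $+\infty$ when $q>1$: the individual terms tend to $h(x^1_i,x^1_{i+1})-h(x^0_i,x^0_{i+1})$, which vanishes only after summing over a full period, so ``the partial sums converge'' is not justified; one must either define $\mathcal{F}$ via a $\liminf$ of partial sums and prove well-definedness separately, or first reduce to $q=1$ through the conjunction $H=h^{*q}$ of Section \ref{sec:pre} and renormalize each step by $c=\min_\xi H(\xi,\xi)$, which is what the paper's $a_i(x)=h(x_i,x_{i+1})-c$ does. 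Second, ``the limit minimizes $\mathcal{F}$ by lower semicontinuity'' is not automatic for an infinite sum in the topology \eqref{t_metric}; you need uniform control of the tails $\sum_{|i|>j}$ along the minimizing sequence, which is exactly what the $\phi(\delta)$-coercivity and the $-C|x_n-x_0|$ lower bound of Lemma \ref{lemm:finite_bdd} provide. With those two patches, and with the anchoring normalization you describe (which is indeed where the neighboring hypothesis must be used to exclude collapse onto a single periodic orbit), your plan is the correct one.
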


Although we have discussed minimal configuration in the preceding paragraph, 
there are also interesting works that treat non-minimal orbits between periodic orbits,
particularly,  \cite{Rabinowitz2008} and \cite{Yu2022}.
In  \cite{Rabinowitz2008}, Rabinowitz used minimizing methods to prove the existence of three types of solutions-periodic, heteroclinic and homoclinic-in potential systems with reversibility for time, i.e. $V(t,x)=V(-t,x)$.
Under an assumption called  {\it gaps}, which is similar to {neighboring}, for periodic and heteroclinic solutions, 
each non-minimal heteroclinic and homoclinic orbit can be given as {\it{n transition orbits}} ($n \ge 2$)
between two periodic orbits.

\begin{defi}[$n$ transition orbits]
\label{defi:transition}
An orbit or solution is called an $n$ transition orbit
if
it passes between two periodic orbits, say $u^0$ and $u^1$, and alternately through a neighborhood of them,
where `$n$' means the number of times it changes from a neighborhood of $u^0$ to a neighborhood of $u^1$.
\end{defi}
In Definition \ref{defi:transition}, these are heteroclinic orbits when $n$ is odd and these are homoclinic orbits when $n$ is even.

\if0
\begin{figure}[H]
  \begin{minipage}[b]{0.48\linewidth}
    \centering
    \includegraphics[width=6cm]{1transition.pdf}
    \subcaption{$1$ transition heteroclinic configuration}
  \end{minipage}
  \begin{minipage}[b]{0.48\linewidth}
    \centering
    \includegraphics[width=6cm]{2transition.pdf}
    \subcaption{$2$ transition homoclinic configuration}
  \end{minipage}
  \caption{One and two transition orbits}
\end{figure}
\fi

\begin{rem}
The existence of $1$-transition orbits ( i.e., minimal heteroclinic orbits)  does not require gaps for heteroclinic orbits as seen in Section \ref{sec:general}.
Also, we can see this by considering a simple pendulum system.
\end{rem}

Rabinowitz's approach can be applied to variational methods for area-preserving maps. 
Yu \cite{Yu2022} added $h$ to the following assumption $(h_5)-(h_6)$ to $h$:
\begin{itemize}
  \item[($h_5$)] There exists a positive continuous function $p$ on $\R^2$ such that:
  \[
  h(\xi,\eta') +  h(\eta,\xi') -   h(\xi,\xi') -  h(\eta,\eta')  > \int_{\xi}^{\eta}  \int_{\xi'}^{\eta'} p
  \]
  if $\xi < \eta$ and $\xi' < \eta'$.
  \item[($h_6$)] There is a $\theta > 0$ satisfying the following conditions:
  \begin{itemize}
  \item $\xi \mapsto \theta \xi^2 /2 - h(\xi,\xi')$ is convex for any $\xi'$, and
  \item $\xi \mapsto \theta \xi'^2 /2 - h(\xi,\xi')$ is convex for any $\xi$.
  \end{itemize}
\end{itemize}

\begin{rem}
\label{rem:delta_bangert}
One of a sufficient conditions for $(h_2)-(h_5)$ is 
\[
(\tilde{h}) \ \text{$h \in C^2$ and $\partial_1 \partial_2 h \le -\delta <0$ for some $\delta>0$}.
\]
by taking a constant function $\rho=\delta$ as a positive function.
If $f \in C^1$ and satisfies $\partial X/\partial y \ge \delta$ for some $\delta>0$, a generating function $h$ for $f$ satisfies $(\tilde{h})$.
However, $(\tilde{h})$ is not a necessary condition for satisfying $(h_2)-(h_5)$.
\end{rem}
 
 Clearly, $(h_5)$ implies $(h_3)$.
Mather \cite{Mather1987} proved
that  if $h$ satisfies $(h_1)$-$(h_6)$, then $\partial_2 h (x_{i-1},x_i)$ and
 $\partial_1h(x_i,x_{i+1})$ exist in the meaning of the left-sided limit.
 In addition, he proved that if $x$ is a locally minimal configuration, then it satisfies:
\[
\partial_2 h (x_{i-1},x_i) + \partial_1h(x_i,x_{i+1})=0.
\]

Yu applied Rabinowitz's methods to twist maps to show finite transition orbits of monotone twist maps for all $\alpha \in \Q$.
We will give a summary of his idea in the case of $\alpha=0$ (i.e. $(q,p)=(1,0)$ in Definition \ref{def:periodic}).
 Let $(u^0,u^1)$ be a neighboring pair with $\alpha=0$.
 By abuse of notation, we then denote $u^j$  by the constant configuration $\{x_i=u^j\}_{i\in\Z}$.
We set:
\[
{c:=\min_{ x \in \R} h(x,x)}(=h(u^0,u^0)=h(u^1,u^1)).
\]
And:
\begin{align}
\label{pre-action}
I(x) := \sum_{i \in \Z } a_i(x),
\end{align}
where $a_i(x)= h(x_i,x_{i+1}) - c$.

\if0
\begin{rem}
In the case of the reversible potential systems in \cite{rabi}, one shows $a_i(x)$ is non-negative, i.e., $I$ is a positive term series.
Though we possibly take $h(x,y)<0$,
we show an example such that $h(x,y)$ is non-negative.
\end{rem}
\fi

Yu studied local minimizers of $I$ to show the existence of finite transition orbits.
Given a rational number $\alpha \in \Q$ and a neighboring pair  $(x^0,x^1)$ with $x^0, x^1 \in \calm_{\alpha}^\per$,
we let:
\begin{align*}
I^+_{\alpha}(x^0,x^1)&=\{x_0 \in \R \mid x \in \calm_{\alpha}^+(u^0,u^1)\}, \ and\\
I^-_{\alpha}(x^0,x^1)&=\{x_0 \in \R \mid x \in  \calm_{\alpha}^-(u^0,u^1) \}.
\end{align*}
Under the above setting, he showed the following theorem.
\begin{theo}[Theorem 1.7, \cite{Yu2022}]
\label{theo:yu_main}
Given a rational number $\alpha \in \Q$ and a neighboring pair $(x^0,x^1)$  with $x^0, x^1 \in \calm_{\alpha}^\per$.
If
\begin{align}
\label{hetero_gap}
I^+_{\alpha}(x^0,x^1) \neq (x_0^0,x_0^1) \ \text{and} \
I^-_{\alpha}(x^0,x^1) \neq (x_0^0,x_0^1) ,
\end{align}
then, for any $k \in \Z_{>0}$, there exist infinite number of $2k$-transition heteroclinic orbits and $2k+1$-transition homoclinic orbits  which passes through the neighborhood of  $x^0$ and $x^1$ alternately.
\end{theo}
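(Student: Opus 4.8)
The plan is to realize every $n$-transition orbit as a minimizer of the renormalized action $I$ over a suitably constrained class, following the minimizing scheme of Rabinowitz as adapted to twist maps by Yu. First I would reduce to $\alpha=0$: passing to the $(q,p)$-periodic setting and relabeling, one may assume $u^0,u^1$ are constant configurations, $c=\min_{x}h(x,x)$, and work with $I(x)=\sum_{i\in\Z}\bigl(h(x_i,x_{i+1})-c\bigr)$; the general rational case follows by the same argument applied to the $q$-step generating function. The first technical input is a compactness and lower-semicontinuity package: using $(h_2)$ for coercivity, $(h_5)$--$(h_6)$ for the convexity and strict comparison estimates, and $(h_1)$ for periodicity, one shows that $I$ is lower semicontinuous for the topology \eqref{t_metric}, that $I$ is bounded below and finite on the heteroclinic classes introduced below, and that minimizing sequences in those classes are precompact.

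Next I would treat the $1$-transition problem as a base case: set $c^{+}:=\inf_{\Gamma^{+}} I$ and $c^{-}:=\inf_{\Gamma^{-}} I$, where $\Gamma^{+}$ (resp.\ $\Gamma^{-}$) is the class of configurations asymptotic to $u^0$ at $-\infty$ and $u^1$ at $+\infty$ (resp.\ in the reverse order); by the compactness package these infima are attained, every minimizer lies in $\calm_{\alpha}^{+}$ (resp.\ $\calm_{\alpha}^{-}$), and so its $0$-th coordinate lies in $I^{+}_{\alpha}$ (resp.\ $I^{-}_{\alpha}$). The decisive step, which I expect to be the main obstacle, is the \emph{barrier estimate}. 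Invoking the gap hypothesis \eqref{hetero_gap}, choose $\beta^{+}\in(x_0^0,x_0^1)\setminus I^{+}_{\alpha}$ and $\beta^{-}\in(x_0^0,x_0^1)\setminus I^{-}_{\alpha}$, and set
\[
c^{+}_{\beta}:=\inf\{I(x): x\in\Gamma^{+},\ x_0=\beta^{+}\},\qquad
c^{-}_{\beta}:=\inf\{I(x): x\in\Gamma^{-},\ x_0=\beta^{-}\}.
\]
I would prove the strict inequalities $c^{+}_{\beta}>c^{+}$ and $c^{-}_{\beta}>c^{-}$: if, say, $c^{+}_{\beta}=c^{+}$, then any constrained minimizer would be an unconstrained minimizer of $I$ over $\Gamma^{+}$, hence an element of $\calm_{\alpha}^{+}$ with $0$-th coordinate $\beta^{+}$, contradicting $\beta^{+}\notin I^{+}_{\alpha}$. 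The positive number $\gamma:=\min(c^{+}_{\beta}-c^{+},\,c^{-}_{\beta}-c^{-})$ is the energetic cost of forcing a single transition through the forbidden value, and it is precisely what prevents distinct bumps from merging; making it rigorous requires the strict comparison from $(h_4)$--$(h_5)$ to guarantee both that the constrained infimum is attained and that equality forces genuine minimality.

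With the barrier in hand, the multibump construction proceeds by constrained minimization. For each $n\ge2$ and each choice of widely separated transition windows, I would define the admissible class $\Gamma^{(n)}$ of configurations forced to cross the barrier values $\beta^{+},\beta^{-}$ alternately $n$ times (pinning $x$ at $\beta^{\pm}$ on prescribed indices, or constraining it to the respective sub-neighborhoods between transitions) and minimize $I$ over $\Gamma^{(n)}$; a minimizer $x^{(n)}$ exists by the compactness package. A clean-up lemma, driven by the barrier $\gamma$, then shows that between consecutive transitions the minimizer settles deep inside a neighborhood of $u^0$ or $u^1$, so the pinning constraints are inactive and $x^{(n)}$ is in fact a \emph{free} local minimizer. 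By Mather's theorem quoted above, together with $(h_1)$--$(h_6)$, $x^{(n)}$ is then a stationary configuration, i.e.\ an orbit of $\tilde{f}$, asymptotic to $u^0$ or $u^1$ at $\pm\infty$ (heteroclinic when $n$ is odd, homoclinic when $n$ is even) and making exactly $n$ transitions. Finally, varying the dwell times spent near $u^0$ and $u^1$ between successive windows produces geometrically distinct orbits for each fixed $n$, yielding infinitely many $n$-transition orbits and completing the proof. The difficulties concentrate in the barrier estimate and in the clean-up lemma that deactivates the constraints; the remaining steps are the standard coercivity, ordering, and comparison arguments supplied by $(h_1)$--$(h_6)$.
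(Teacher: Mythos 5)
This theorem is quoted from \cite{Yu2022} and not reproved in the present paper, but your outline --- constrained minimization of the renormalized action $I$ over classes pinned near $u^0$ and $u^1$ in prescribed windows, a barrier estimate derived from the gap hypothesis (precisely the content of Lemma \ref{lemm:gap_estimate} here), and a clean-up step showing the constraints are inactive so that the minimizer is a free local minimizer and hence a stationary configuration --- is exactly the scheme the paper imports from Yu and later adapts to infinitely many transitions in Theorem \ref{theo:inf_transition}. Your proposal is correct in structure and takes essentially the same approach as the cited proof.
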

Furthermore, Rabinowitz \cite{Rabinowitz2008} proved the existence of an infinite transition orbit as a limit of sequences of finite transition orbits.
However, the variational structure of infinite transition orbits is an open question.
To consider the question for twist maps, the following proposition is essential.
\begin{prop}[Proposition 2.2, \cite{Yu2022}]
\label{prop:finite}
If $I(x) < \infty$, then $|x_i -u^1| \to 0$ or $|x_i -u^0| \to 0$ as $|i| \to \infty$.
\end{prop}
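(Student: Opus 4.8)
The plan is to extract the two standard consequences of convergence of the bi-infinite series $I(x)=\sum_{i\in\Z}a_i(x)$ and combine them with a compactness/rigidity argument for configurations of zero renormalized action. Throughout I use, as in the variational set-up, that the configurations in question are confined to the strip $u^0\le x_i\le u^1$: this is where the local minimizers of $I$ live, by the ordering lemma that follows from $(h_3)$, and it is exactly this confinement that pins the limit to $u^0,u^1$ rather than to one of their integer translates $u^0+n,u^1+n$ permitted by $(h_1)$. On the strip the increments $x_{i+1}-x_i$ lie automatically in $[-(u^1-u^0),\,u^1-u^0]$, so no coercivity is needed for boundedness. First I would record the elementary necessary condition: since the series converges to a finite value, its general term tends to $0$, i.e.
\[
h(x_i,x_{i+1})\longrightarrow c\qquad(|i|\to\infty).
\]

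The heart of the proof is a rigidity statement for zero-action configurations. Fix any sequence $i_k$ with $|i_k|\to\infty$ and look at the shifts $x^{(k)}=(x_{i_k+j})_{j\in\Z}\in[u^0,u^1]^{\Z}$. By compactness of $[u^0,u^1]^{\Z}$ in the topology \eqref{t_metric}, a subsequence converges pointwise to some $y\in[u^0,u^1]^{\Z}$; since $a_{i_k+j}\to0$ for each fixed $j$, continuity of $h$ yields $h(y_j,y_{j+1})=c$ for all $j$. I then claim: \emph{any $y$ with $y_j\in[u^0,u^1]$ and $h(y_j,y_{j+1})=c$ for all $j$ is constant, $y\equiv u^0$ or $y\equiv u^1$.} I expect this to be the main obstacle. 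To prove it I would argue that (i) the constant configurations $u^0,u^1$ minimize the renormalized action over strip configurations, so $I\ge0$ there and $y$, having $I(y)=0$, is itself minimal; and (ii) by the strict submodularity $(h_5)$ the minimal value $0$ is attained \emph{only} by configurations with no interior excursion --- a genuine connection dipping into the open interval $(u^0,u^1)$ would, via the exchange inequality in $(h_5)$, carry a strictly positive Peierls-type barrier, while the neighboring hypothesis guarantees there is no other periodic minimal configuration between $u^0$ and $u^1$ to absorb it. Hence $y$ must be one of the two constants.

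Granting the claim, the conclusion is a short separation argument. Every pointwise shift-limit of $x$ along $|i_k|\to\infty$ is a constant in $\{u^0,u^1\}$; consequently $x_{i+1}-x_i\to0$ (otherwise some shift-limit would satisfy $y_1\ne y_0$) and $\operatorname{dist}(x_i,\{u^0,u^1\})\to0$ (otherwise some shift-limit would have $y_0\in(u^0,u^1)$). Choose $\rho>0$ with $3\rho<u^1-u^0$ and $N$ so large that $|x_{i+1}-x_i|<\rho$ and $x_i\in[u^0,u^0+\rho]\cup[u^1-\rho,u^1]$ whenever $|i|>N$. The two clusters are separated by a gap exceeding $\rho$ while the increments are smaller than $\rho$, so on each tail $x_i$ cannot jump from one cluster to the other and must stay in a single cluster; combined with $\operatorname{dist}(x_i,\{u^0,u^1\})\to0$ this forces $x_i\to u^0$ or $x_i\to u^1$ as $i\to+\infty$, and likewise as $i\to-\infty$, which is the assertion.

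I emphasize that the delicate point is step (ii) of the claim: the passage from the pointwise identity $h(y_j,y_{j+1})=c$ to constancy of $y$. The naive hope that $h(\xi,\eta)=c$ with $\xi,\eta\in[u^0,u^1]$ already forces $\xi=\eta\in\{u^0,u^1\}$ need not hold pointwise under $(h_1)$--$(h_3)$ alone (strict submodularity only controls the symmetric sum $h(\xi,\eta)+h(\eta,\xi)$), which is precisely why I route the argument through the whole bi-infinite zero-action configuration $y$ and its minimality, rather than through a single pair. This is where the strengthened hypothesis $(h_5)$ and the neighboring property are indispensable.
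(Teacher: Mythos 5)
Your overall architecture (shift-limits of a finite-action configuration have pointwise zero renormalized action; such limits are rigid; a separation argument then pins each tail to $u^0$ or $u^1$) is an attractive alternative to a direct quantitative argument, but two of its three pillars are not supported by the hypotheses or by the lemmas available here. First, the opening step ``the general term tends to $0$'' is not justified: $I(x)=\sum_{i\in\Z}a_i(x)$ is a bi-infinite series whose terms can be negative (Lemma \ref{lemm:finite_bdd} only bounds window sums below by $-C|x_n-x_0|$), and $I(x)<\infty$ means convergence of the symmetric partial sums, which yields only $a_n(x)+a_{-n}(x)\to0$, not $a_i(x)\to0$ as $|i|\to\infty$. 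Deducing $a_i(x)\to 0$ essentially requires already knowing that the tails of $x$ settle near $u^0$ or $u^1$ (where $a_i\approx h(u^j,u^j)-c=0$), i.e.\ the conclusion of the proposition; as written the step is close to circular.

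Second, and more seriously, part (ii) of your rigidity claim is precisely the assertion that the one-way connection costs $c_0$ and $c_1$ are \emph{individually} positive, and this is not available: the exchange inequality $(h_3)$/$(h_5)$ together with Lemma \ref{lemm:positive} gives only $c_\ast=c_0+c_1>0$ (Corollary \ref{co:positive_het}), and your supporting claim (i) that $I\ge 0$ on all strip configurations is likewise unproved (finite windows can have negative action, and a configuration with $I=0$ need not be minimal in the Aubry sense, which is a statement about finite segments with fixed endpoints). For all you have shown, a shift-limit could be a monotone heteroclinic with $h(y_j,y_{j+1})\equiv c$, and then both consequences you extract from rigidity ($|x_{i+1}-x_i|\to0$ and $\mathrm{dist}(x_i,\{u^0,u^1\})\to0$) fail to follow. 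The mechanism that actually closes the argument is quantitative rather than rigidity-based: Lemma \ref{lemm:finite_bdd} shows that any closed excursion reaching distance $\delta$ from both $u^0$ and $u^1$ costs at least $\phi(\delta)>0$, and confinement to the strip forces every up-crossing to be paired with a down-crossing, each pair carrying a fixed positive cost by $(h_3)$ (this is what $c_\ast>0$ packages); hence infinitely many visits to the middle region, or infinitely many transitions between the two ends, force $I(x)=+\infty$. Note also that Proposition \ref{prop:finite} assumes neither minimality of $x$ nor the gap condition \eqref{gap}, so nothing like Lemma \ref{lemm:gap_estimate} can be invoked to rescue the rigidity step.
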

Since this implies that $I(x) = \infty$  if $x$ is an infinite transition orbit, 
we need to fix normalization of $I$.
Therefore, we focus on giving the variational structure of infinite transition orbits of twist maps
and discuss the conditions of $h$.
Roughly speaking, our main theorem and the steps of the proof imply the following theorem.
\begin{theo}
The function $J$ defined in Section \ref{subsec:setting}  gives us the variational structure of infinite transition orbits.
\end{theo}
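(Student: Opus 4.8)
The plan is to read the informal statement as three precise assertions about a renormalized action $J$: that (i) $J$ is bounded below on a suitable class $\Lambda$ of admissible infinite transition configurations, (ii) the infimum of $J$ over $\Lambda$ is attained, and (iii) every minimizer is a stationary configuration in the sense defined above, hence a genuine orbit of $\tilde f$, which moreover performs infinitely many transitions between $u^0$ and $u^1$. The motivation is Proposition~\ref{prop:finite}: it forces $I(x)=\infty$ on any infinite transition orbit, so the whole point of $J$ is to subtract this divergence in a way that leaves a finite, variationally meaningful quantity. Following Rabinowitz's window decomposition, I would encode an infinite transition configuration by a bi-infinite pattern of ``windows'' $\cdots<s_{-1}<s_0<s_1<\cdots$ between whose endpoints $x$ stays, alternately, in fixed neighborhoods of $u^0$ and of $u^1$, making exactly one excursion per window; $\Lambda$ is the set of configurations realizing a prescribed such pattern. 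Writing $c^{+}:=\inf_{x\in\calm_\alpha^{+}(u^0,u^1)}I(x)$ and $c^{-}:=\inf_{x\in\calm_\alpha^{-}(u^0,u^1)}I(x)$ for the minimal one-transition actions, and letting $\varepsilon_n\in\{+,-\}$ record the type of the $n$-th excursion, I would set
\[
J(x)\;=\;\sum_{n\in\Z}\Bigl(I_{[s_n,s_{n+1})}(x)-c^{\,\varepsilon_n}\Bigr),
\]
where $I_{[s_n,s_{n+1})}$ is the action accumulated over the bonds of the $n$-th window. Subtracting $c^{\varepsilon_n}$ per window cancels exactly the linear growth that makes $I$ infinite.

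For (i) the key is that, on $\Lambda$, each excursion is a genuine transition, so the minimal action over the $n$-th window with its prescribed endpoints and transition type is bounded below by a quantity $m_n$ that converges to $c^{\varepsilon_n}$ as the window widens. Hence $I_{[s_n,s_{n+1})}(x)-c^{\varepsilon_n}\ge m_n-c^{\varepsilon_n}$, and the strict superadditivity $(h_5)$ together with the quadratic coercivity $(h_6)$ yields exponentially small boundary corrections $|m_n-c^{\varepsilon_n}|$, which are summable; this gives $J\ge -\sum_n|m_n-c^{\varepsilon_n}|>-\infty$. The same superadditivity forces $c^{\pm}>0$, so each genuine transition carries a definite positive cost.

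The heart of the argument, and the step I expect to be the main obstacle, is (ii). Take a minimizing sequence $x^{(k)}\in\Lambda$. Coercivity from $(h_2)$ and $(h_6)$ bounds the entries uniformly on each fixed window, so a diagonal extraction produces a limit $x^{\ast}\in\R^{\Z}$ in the topology~\eqref{t_metric}, and lower semicontinuity of each windowed action gives $J(x^{\ast})\le\liminf_k J(x^{(k)})$. The danger is that a window degenerates in the limit, its excursion collapsing onto $u^0$ or $u^1$, so that $x^{\ast}$ has strictly fewer transitions and leaves $\Lambda$; since a collapsed window contributes $-c^{\varepsilon_n}<0$, this is exactly the mechanism by which the infimum could fail to be attained inside $\Lambda$. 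This is the concentration-compactness phenomenon in Rabinowitz's setting, and the gap hypothesis~\eqref{hetero_gap} of Theorem~\ref{theo:yu_main} is precisely what rules it out: because $I^{\pm}_\alpha(x^0,x^1)\neq(x_0^0,x_0^1)$, the heights attained at a transition omit an interval, which localizes each transition and produces a genuine energy barrier, so a transition cannot be performed with arbitrarily small excess action. A cut-and-reglue comparison at the window boundaries, using $(h_3)$ and $(h_4)$ to compare the spliced configuration against $x^{(k)}$, then shows that collapsing a window would strictly raise $J$, contradicting minimality; hence $x^{\ast}$ retains every transition and $x^{\ast}\in\Lambda$ is a minimizer. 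Establishing this gap-driven ``no vanishing of excursions'' estimate is the main technical point.

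Finally, for (iii) I would run the usual interior-variation argument: varying a single entry $x^{\ast}_i$ alters only the two bonds meeting $i$, hence only finitely many windowed terms of $J$, so minimality of $x^{\ast}$ forces local minimality of every finite sub-configuration, and Mather's result quoted above (locally minimal configurations satisfy $\partial_2 h(x_{i-1},x_i)+\partial_1 h(x_i,x_{i+1})=0$) shows $x^{\ast}$ is a genuine orbit of $\tilde f$; that it is an infinite transition orbit is exactly the no-loss estimate of step (ii). To close the circle with Rabinowitz's limit description, I would observe that minimizing the truncation of $J$ to its first $2k$ windows reproduces the $2k$-transition orbits of Theorem~\ref{theo:yu_main}, and that letting $k\to\infty$ recovers $x^{\ast}$ in the topology~\eqref{t_metric}; thus $J$ is precisely the renormalized functional whose minimizers are the infinite transition orbits that Rabinowitz obtained only as limits, which is the assertion of the theorem.
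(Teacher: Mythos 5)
Your overall strategy is the same as the paper's: decompose the configuration into windows, renormalize the action window by window so that the divergence forced by Proposition \ref{prop:finite} is cancelled, minimize the renormalized functional on a sequentially compact constraint set, and use the gap hypothesis to show the minimizer does not sit on the constraint boundary (the paper's Lemma \ref{lemm:gap_estimate} is exactly the ``energy barrier'' you describe, and its proof of Theorem \ref{theo:inf_transition} is the cut-and-splice comparison you sketch). You also correctly identify the boundary/collapse step as the crux. Two remarks on where your route diverges harmlessly: the paper gets compactness for free from $x_i\in[u^0,u^1]$ via Tychonoff (Lemma \ref{lemm:cpt}), so no coercivity from $(h_2)$, $(h_6)$ is needed; and the paper alternates ``dwell'' blocks (normalized by $|I_i|c$) with ``transition'' blocks, rather than putting one excursion in every window.

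There is, however, a genuine gap in your step (i). You subtract the infinite-domain heteroclinic actions $c^{\pm}$ from each finite window and assert that the discrepancies $|m_n-c^{\varepsilon_n}|$ are ``exponentially small'' by $(h_5)$--$(h_6)$. No such exponential estimate is available (or used in the paper); with Lipschitz $h$ the discrepancy of a window whose endpoints are pinned at distance $\rho_n$ from $u^0,u^1$ is only $O(\rho_n)$, and indeed $m_n$ can lie \emph{below} $c^{\varepsilon_n}$ by about $2C\rho_n$. With ``fixed neighborhoods'' (constant $\rho_n$) these errors are not summable over $\Z$ and your lower bound $J>-\infty$ collapses. The paper resolves this in two ways you omit: it builds the summability into the admissible data by requiring $\sum_{i\in\Z}\rho_i<\infty$ in the definition of $P$ in \eqref{def:kp}, and it subtracts the \emph{exact constrained finite-window minima} $c_i^{\pm}$ rather than $c^{\pm}$, so that every transition term satisfies $A_i(x)\ge 0$ by construction and the only negative contributions come from the dwell blocks, controlled by $-C\sum_i\rho_i$ (Lemmas \ref{lemm:upper} and \ref{lemm:lower}). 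Relatedly, you do not address why the bi-infinite sum defining $J$ has a well-defined value independent of the order of partial sums; this is not automatic for a conditionally convergent double-tail series and requires the argument of Lemma \ref{lemm:welldefi}, which again leans on $\sum_i\rho_i<\infty$. Once you impose summable radii and replace $c^{\varepsilon_n}$ by the constrained window minima, your argument aligns with the paper's.
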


This paper is organized as follows.
Section \ref{sec:pre} deals with some results in \cite{Yu2022} and remarks.
In Section \ref{sec:pf}, our main results are stated and proved in the case of $\alpha=0$.
Section \ref{sec:example_h} provides the special case of $h$.
In the last section, we extend the result of the infinite transition orbits to a more general representation.


\section{Preliminary}
\label{sec:pre}
In this section, we would like to introduce properties of $\eqref{pre-action}$ and minimal configurations using several useful results in \cite{Yu2022}.
Moreover, we study estimates of heteroclinic configurations.

\subsection{Properties of minimal configurations}
Let $(u^0,u^1)$ be a neighboring pair of $\calm_0^\per$ and:
\begin{align}
\label{eq:x}
\begin{split}
X &= X(u^0,u^1)= \{ x = (x_i)_{i \in \Z} \mid u^0 \le x_i \le u^1 \ ({}^{\forall} i \in \Z) \},\\
X(n)&=X(n ; u^0,u^1)=\{ x =(x_i)_{i=0}^{n}  \mid u^0 \le x_i \le u^1 \ ({}^{\forall} i \in \{0, \cdots, n\}) \}, and\\
\hat{X}(n) &= \hat{X}(n ; u^0,u^1)=\{ x =(x_i)_{i=0}^{n}   \mid x_0=x_n, \ u^0 \le x_i \le u^1 \ ({}^{\forall} i \in \{0, \cdots, n\}) \}.
\end{split}
\end{align}

\begin{defi}[\cite{Yu2022}]
For $x \in X$, we set:
\[d(x):=\max_{0 \le i \le n} \min_{j \in \{0,1\}} |x_i -u^j|.\]
For any $\delta > 0$, let
$C$ be a Lipschitz constant of $h$ and:
\[
\phi(\delta):= \inf_{n \in \Z^+}
\inf \left\{
\sum_{i=0}^{n-1} a_i(x) \mid x \in \hat{X}(n) \ {\text{and}} \ d(x) \ge \delta
\right\}.
\]
\end{defi}

\begin{rem}
(1)We can replace Lipschitz continuity with local Lipschitz continuity on $[u^0,u^1] \times [u^0,u^1] $.
(2)The function $\phi(\delta)$ is positive for any $\delta>0$ and $\phi(0)=0$.
\end{rem}

\begin{lemm}[Lemma 2.7 and 2.8, \cite{Yu2022}]
\label{lemm:finite_bdd}
For any $n \in \N$ and $x \in \hat{X}(n)$ satisfying $\displaystyle{\min_{0 \le i \le n}  |x_i-u^j| \ge \delta}$ for $j=0,1$, 
\label{lemm:yu_lower}
\[\sum_{i=0}^{n-1} a_i(x) \ge n \phi(\delta)\]
and for any $n \in \N$ and $x \in {X}(n)$,
\[
\sum_{i=0}^{n-1} a_i(x)  \ge -C|x_n-x_0|.\]
\end{lemm}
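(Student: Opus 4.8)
The plan is to treat the two inequalities separately, since they rely on different mechanisms. I would dispatch the second bound first, as it is the softer of the two. Given $x \in X(n)$, close it into a periodic configuration $\bar x = (x_0, x_1, \dots, x_{n-1}, x_0) \in \hat{X}(n)$ by overwriting the last entry with the first. Only the final edge changes, so that $\sum_{i=0}^{n-1} a_i(x) = \sum_{i=0}^{n-1} a_i(\bar x) + h(x_{n-1},x_n) - h(x_{n-1},x_0)$, and the Lipschitz continuity of $h$ with constant $C$ bounds the bracketed difference below by $-C|x_n - x_0|$. It then remains to know that every closed configuration has nonnegative action, i.e. $\sum_{i=0}^{n-1} a_i(\bar x) \ge 0$; this is exactly the statement $\phi(0)=0$ read as a lower bound (equivalently, that $c=\min_x h(x,x)$ is the minimal action per period at rotation number $0$), and combining the two facts gives the claim at once.

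For the first bound the key is to exploit the full strength of the hypothesis, namely that \emph{every} $x_i$, not merely one of them, lies at distance at least $\delta$ from both $u^0$ and $u^1$. First I would note that for each fixed $i$ the constant loop $(x_i,x_i)$ belongs to $\hat{X}(1)$ and satisfies $d=\min_j |x_i-u^j|\ge\delta$, so it is admissible in the infimum defining $\phi(\delta)$; hence $h(x_i,x_i)-c\ge\phi(\delta)$. Summing over $i=0,\dots,n-1$ gives $\sum_{i=0}^{n-1}\bigl(h(x_i,x_i)-c\bigr)\ge n\,\phi(\delta)$, which is the target right-hand side expressed through the diagonal terms.

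The remaining, and principal, step is to show that passing from these diagonal terms to the genuine loop does not decrease the action:
\[
\sum_{i=0}^{n-1} h(x_i,x_{i+1}) \ \ge\ \sum_{i=0}^{n-1} h(x_i,x_i),
\]
valid for a closed configuration because $x_n=x_0$. This is where $(h_3)$ enters. Since the sequence of second arguments $(x_1,\dots,x_{n-1},x_0)$ is, as a multiset, identical to the sequence of first arguments $(x_0,\dots,x_{n-1})$, the cyclic pairing $i\mapsto i+1$ and the identity pairing $i\mapsto i$ are two assignments between the \emph{same} two multisets. The submodularity in $(h_3)$, that aligned pairs cost strictly less than crossed pairs, is precisely the Monge hypothesis of the classical rearrangement inequality, whose minimizer is the monotone (sorted-to-sorted) assignment; as both multisets coincide, that minimizer is the identity pairing. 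Hence the cyclic assignment, being one admissible assignment, has action at least that of the identity, which is the displayed inequality. Chaining the three estimates yields $\sum_{i=0}^{n-1} a_i(x)\ge n\,\phi(\delta)$.

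I expect the rearrangement step to be the main obstacle to write cleanly. The delicate points are that closedness ($x_n=x_0$) is genuinely needed for the two multisets to match, and that repeated values among the $x_i$ must be handled so that \emph{sorted-to-sorted equals identity} is literally correct. I would make the step self-contained by an exchange argument: starting from the loop, repeatedly apply $(h_3)$ to any two indices whose arguments are crossed, each such transposition not increasing the total cost, and iterate until the identity assignment is reached; this terminates because every uncrossing strictly decreases the number of inversions relative to the sorted order.
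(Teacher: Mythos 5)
Your argument is correct, and note that the paper itself gives no proof of this lemma: it only points to Yu's Lemmas 2.7--2.8 and records that $(h_3)$ is the essential hypothesis, which your proof confirms. The engine of your argument is the Monge/rearrangement consequence of $(h_3)$: for a closed configuration ($x_n=x_0$) the first- and second-argument multisets coincide, so the cyclic pairing costs at least the diagonal pairing, $\sum_{i=0}^{n-1}h(x_i,x_{i+1})\ge\sum_{i=0}^{n-1}h(x_i,x_i)$, and your exchange argument (uncrossing inversions one at a time, with ties costing nothing) is the standard and correct way to make this rigorous. From there both inequalities do follow: the first because each constant loop $(x_i,x_i)$ is admissible in the infimum defining $\phi(\delta)$, so $h(x_i,x_i)-c\ge\phi(\delta)$ termwise; the second because $h(\xi,\xi)\ge c$ gives $\sum_{i=0}^{n-1}a_i(\bar x)\ge 0$ for the closed-up configuration while Lipschitz continuity controls the single altered edge. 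The one presentational flaw is a circular-looking dependency: in the second part you justify $\sum_{i=0}^{n-1}a_i(\bar x)\ge 0$ by appealing to $\phi(0)=0$, which the paper states only as an unproved remark; but this nonnegativity is exactly what your own rearrangement step delivers, via $\sum h(\bar x_i,\bar x_{i+1})\ge\sum h(\bar x_i,\bar x_i)\ge nc$, so you should invoke that step directly rather than the remark. Doing so makes the whole proof self-contained from $(h_1)$, $(h_3)$, Lipschitz continuity, and the definitions of $c$ and $\phi$.
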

\begin{proof}
See \cite{Yu2022}. This proof requires $(h_3)$.
\end{proof}

\begin{lemm}[Lemma 2.9, \cite{Yu2022}]
\label{lemm:positive}
If $x \in X$ satisfies  $|x_i - u^0|$ as $|i| \to \infty$ and $x_i \neq u^0$ for some $i \in Z$,
then $I(x)>0$.
\end{lemm}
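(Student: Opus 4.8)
The plan is to show that $u^0$ is a \emph{strict} minimizer of $I$ among all configurations in $X$ asymptotic to $u^0$ at both ends, the minimum value being $I(u^0)=\sum_i\bigl(h(u^0,u^0)-c\bigr)=0$. I would separate this into a soft lower bound $I(x)\ge 0$ valid for the whole class, followed by a rigidity step that upgrades the bound to a strict one once $x\neq u^0$.

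First I would establish $I(x)\ge 0$. The second estimate in Lemma \ref{lemm:finite_bdd} bounds a finite segment from below purely in terms of the displacement of its endpoints, and since the action depends only on consecutive pairs it is invariant under index translation; hence for every window
\[
\sum_{i=-m}^{n-1} a_i(x)\ge -C\,\lvert x_n-x_{-m}\rvert .
\]
Because $\lvert x_i-u^0\rvert\to 0$ as $\lvert i\rvert\to\infty$, the right-hand side tends to $0$ as $m,n\to\infty$, so every sufficiently large partial sum exceeds $-\varepsilon$ and therefore $I(x)\ge 0$ (allowing $I(x)=+\infty$, in which case $I(x)>0$ is immediate). I would phrase this step entirely through lower bounds on large finite windows, so as not to presuppose convergence of the two-sided series. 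The same estimate applies verbatim to any competitor in $X$ asymptotic to $u^0$, which I reuse below.

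It then remains to treat the case $0\le I(x)<\infty$ and force strict inequality. Suppose for contradiction that $I(x)=0$. Then $x$ is a global minimizer of $I$ over strip configurations asymptotic to $u^0$, since every such competitor has nonnegative action. If $y$ coincides with $x$ outside a finite block and keeps the two boundary values, then $y$ is again asymptotic to $u^0$, and $I(y)-I(x)$ collapses to the finite difference $\sum (a_i(y)-a_i(x))\ge 0$; moreover one may assume $y$ lies in $[u^0,u^1]$, because clamping a competitor to the strip does not increase the action by the min/max comparison that follows from $(h_3)$. Hence no finite modification lowers the action, i.e. $x$ is a minimal configuration.

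The final and, I expect, decisive step is to rule out a nonconstant minimal configuration in $[u^0,u^1]$ with rotation number $0$ that is asymptotic to $u^0$ at both ends. Here I would invoke the rigidity of minimal configurations between a neighboring pair: such a configuration is either one of the periodics $u^0,u^1$ or a heteroclinic member of $\calm_0^{+}$ or $\calm_0^{-}$, hence asymptotic to two \emph{distinct} members of the pair, never homoclinic to $u^0$. Concretely, $x\ge u^0$ with $x\to u^0$ at both ends and $x_j>u^0$ somewhere would have to leave $u^0$ and return, realizing a common value at two indices with opposite monotonicity; this violates the non-crossing/ordering structure of minimal configurations encoded in $(h_4)$ (a tangency $x_k=u^0$ is excluded since it would force $(x_{k-1}-u^0)(x_{k+1}-u^0)<0$ while both factors are $\ge 0$). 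Thus $x=u^0$, contradicting $x\neq u^0$, and therefore $I(x)>0$. The main obstacle is exactly this rigidity: nonnegativity is a soft consequence of Lemma \ref{lemm:finite_bdd}, whereas strictness genuinely relies on the absence of a nontrivial minimal homoclinic to $u^0$ inside a neighboring strip, which must be drawn from the Bangert--Aubry theory of minimal configurations rather than from the finite estimates alone.
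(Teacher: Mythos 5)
Your argument is correct in outline, but it takes a genuinely different route from the one the paper's machinery is built for (the paper itself gives no proof of this lemma, quoting it from Yu's Lemma~2.9; the intended proof is the quantitative one supported by the surrounding definitions). The expected argument runs: let $\delta>0$ measure how far $x$ gets from $\{u^0,u^1\}$, take a window $[-m,n]$ with $|x_{-m}-u^0|,|x_n-u^0|<\varepsilon$, close it into a loop in $\hat{X}(m+n+2)$ by appending $u^0$ at both ends, and use $\phi(\delta)>0$ plus Lipschitz control of the two inserted bonds to get $\sum_{i=-m}^{n-1}a_i(x)\ge\phi(\delta)-2C\varepsilon$, hence $I(x)\ge\phi(\delta)>0$ (with a separate estimate via $c_\ast$ when $x$ actually reaches $u^1$). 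That version yields an explicit positive lower bound, which is what gets reused later (e.g.\ in the proof of Lemma~\ref{lemm:one_delta}). You instead prove the soft bound $I(x)\ge 0$ from the endpoint estimate of Lemma~\ref{lemm:finite_bdd}, then argue that $I(x)=0$ forces $x$ to be a minimal configuration (the clamping step via $(h_3)$ and minimality of $u^0,u^1$ is standard and fine), and finally invoke Bangert's classification of minimal configurations with rational rotation number to conclude $x=u^0$. This is valid, but two caveats: the classification theorem is imported from outside the paper and is the real load-bearing ingredient; and your ``concrete'' fallback --- that $x$ must realize a common value at two indices with opposite monotonicity --- does not survive discretization when $x_i>u^0$ strictly for all $i$, since a discrete sequence can rise and fall without repeating a value, so only the tangency case is genuinely settled by $(h_4)$. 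Your approach buys conceptual clarity and no case analysis at $u^1$; the quantitative approach buys an explicit modulus of positivity and stays within the paper's toolkit.
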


We also need to check that each component of stationary configurations is not equal to $u^0 $ or $u^1$.
This follows from the next lemmas.
\begin{lemm}[Lemma 2.11, \cite{Yu2022}]
\label{lemm:estimate_periodic1}
For any $\delta \in (0, u^1-u^0]$, if $(x_i)_{i=0}^{2}$ satisfies
\begin{enumerate}
\item $x_i \in [u^0,u^1]$ for all $i=0,1,2$;
\item $x_1 \in [u^1-\delta,u^1]$, and $x_0 \neq u^1$ or $x_2 \neq u^1$; and
\item $h(x_0,x_1,x_2) \le h(x_0, \xi,x_2)$ for all $\xi \in [u^1-\delta,u^1] $,
\end{enumerate}
then $x_1 \neq u^1$.
This still holds if we replace every $u^1$ by $u^0$ and every $[u^1-\delta,u^1]$ by $[u^0, u^0+\delta]$.
\end{lemm}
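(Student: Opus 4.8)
The plan is to argue by contradiction through a first-order comparison at the boundary configuration $u^1$, using one-sided derivatives. Suppose $x_1 = u^1$. Condition (2) forces at least one of $x_0,x_2$ to lie strictly below $u^1$, and since $x_0$ and $x_2$ play symmetric roles I would assume without loss of generality that $x_0 < u^1$. Writing $G(\xi) := h(x_0,\xi) + h(\xi,x_2) = h(x_0,\xi,x_2)$, condition (3) says precisely that $\xi = u^1$ minimizes $G$ over $[u^1-\delta,u^1]$. Because $u^1$ is the \emph{right} endpoint of that interval, the one-sided (left) derivative must satisfy $D^-G(u^1) \le 0$; existence of this one-sided derivative is supplied by the semiconcavity packaged into $(h_6)$ and invoked through Mather's result quoted above. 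Thus the whole proof reduces to showing $D^-G(u^1) > 0$.

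The core computation is to expand $D^-G(u^1) = \partial_2 h(x_0,u^1) + \partial_1 h(u^1,x_2)$ and subtract the stationarity relation for the constant minimal configuration $u^1$. Since $u^1 \in \calm_0^\per$ is minimal, hence locally minimal, it satisfies $\partial_2 h(u^1,u^1) + \partial_1 h(u^1,u^1) = 0$, so
\[
D^-G(u^1) = \bigl[\partial_2 h(x_0,u^1) - \partial_2 h(u^1,u^1)\bigr] + \bigl[\partial_1 h(u^1,x_2) - \partial_1 h(u^1,u^1)\bigr].
\]
The strict submodularity $(h_5)$ (which in particular makes $\xi \mapsto \partial_2 h(\xi,\eta)$ strictly decreasing and $\eta \mapsto \partial_1 h(\xi,\eta)$ decreasing) renders the first bracket strictly positive because $x_0 < u^1$, and the second bracket nonnegative because $x_2 \le u^1$. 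Hence $D^-G(u^1) > 0$, contradicting $D^-G(u^1)\le 0$, so $x_1 \ne u^1$. The variant with $u^0$ and $[u^0,u^0+\delta]$ follows verbatim after reflecting: now $u^0$ is the left endpoint, the minimizer condition yields $D^+G(u^0) \ge 0$, and the same monotonicity forces $D^+G(u^0) < 0$, since the signs of both brackets flip consistently.

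The step I expect to demand the most care is the passage from condition (3) to the \emph{strict} inequality $D^-G(u^1) > 0$, for two reasons. First, one must work with one-sided derivatives consistently: I would check that $(h_6)$ guarantees existence of the relevant one-sided partials of $h$ and that $(h_5)$ yields the strict monotonicity of $\partial_2 h(\cdot,\eta)$ needed to make the $x_0$-bracket strictly (not merely weakly) positive, matching conventions with the left-sided derivatives of Mather. Second, it is worth recording why a purely zeroth-order argument using $(h_3)$ alone does not close: applying $(h_3)$ to both summands of $G(u^1)-G(\xi)$ and then invoking the minimality inequality $2h(u^1,u^1) \le h(u^1,\xi) + h(\xi,u^1)$ leaks toward that bound in the unfavorable direction, leaving the sign of $G(u^1)-G(\xi)$ undetermined. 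This is exactly why the first-order stationarity of $u^1$ is indispensable, and it also makes transparent why the hypothesis ``$x_0 \ne u^1$ or $x_2 \ne u^1$'' is precisely what guarantees at least one strict bracket.
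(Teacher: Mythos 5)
The paper itself offers no proof of this lemma: it is imported verbatim from Yu, with only the remark that the proof ``requires $(h_4)$ and $(h_5)$'', so there is no in-paper argument to compare against step by step. Your first-variation argument is nonetheless essentially correct and is a natural route: constrained minimality at the right endpoint of $[u^1-\delta,u^1]$ forces $D^-G(u^1)\le 0$, while subtracting the stationarity identity $\partial_2 h(u^1,u^1)+\partial_1 h(u^1,u^1)=0$ of the minimal constant configuration and invoking $(h_5)$ forces $D^-G(u^1)>0$. Two remarks. First, you use $(h_5)$ together with $(h_6)$ (the latter to secure one-sided derivatives), whereas the cited proof reportedly rests on $(h_4)$ and $(h_5)$; indeed $(h_4)$ yields a shorter zeroth-order alternative: if $x_1=u^1$, then $(x_0,u^1,x_2)$ and $(u^1,u^1,u^1)$ are distinct minimal triples sharing the middle entry, so $(h_4)$ gives $(x_0-u^1)(x_2-u^1)<0$, which is impossible since both factors are $\le 0$ --- the price being that one must first upgrade the constrained minimality of condition (3) to minimality over all $\xi$, a nontrivial step your approach avoids. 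Second, since $h$ is only Lipschitz, the assertion that $(h_5)$ makes $\xi\mapsto\partial_2 h(\xi,\eta)$ strictly decreasing should be implemented at the level of difference quotients rather than asserted for derivatives that may not exist classically: applying $(h_5)$ with $\xi=x_0<\eta=u^1$ and $\xi'<\eta'=u^1$, dividing by $u^1-\xi'$ and letting $\xi'\to u^1$ gives $\partial_2^- h(x_0,u^1)-\partial_2^- h(u^1,u^1)\ge \int_{x_0}^{u^1} p(\tau,u^1)\,d\tau>0$, so the strictness survives the limit precisely because of the positive continuous density $p$ in $(h_5)$; the analogous computation handles the second bracket and the $u^0$ variant. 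With that substitution, and with the observation that for a minimal constant configuration the left- and right-sided stationarity identities coincide (semiconcavity from $(h_6)$ plus unconstrained interior minimality force the two one-sided derivatives to agree), your proof closes.
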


\begin{lemm}[Lemma 2.12, \cite{Yu2022}]
\label{lemm:estimate_periodic2}
If a finite configuration $x=(x_i)_{i=n_0}^{n_1}$ satisfies
\begin{enumerate}
\item $x_i \in [u^0,u^1]$ for all $i=n_0, \cdots, n_1$ and
\item for any $(y_{i})_{i=n_0}^{n_1}$ satisfying $y_{n_0}=x_{n_0}$, $y_{n_1}=x_{n_1}$, and $y_i \in [u^0,u^1]$,
\[
h(x_{n_1},x_{n_1+1}, \cdots, x_{n_2-1}, x_{n_2}) \le h(y_{n_1},y_{n_1+1}, \cdots, y_{n_2-1}, y_{n_2}),
\]
\end{enumerate}
then $x$ is a minimal configuration.
Moreover, if $x$ also satisfies $x_{n_0} \notin \{u^0,u^1\}$ or  $x_{n_1} \notin \{u^0,u^1\}$,
then $x_i \notin \{u^0,u^1\} $ for all $i=n_{0}+1, \cdots, n_1-1$.
\end{lemm}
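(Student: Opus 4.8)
The plan is to prove the two assertions separately: first that the constrained minimizer $x$ is in fact an (unconstrained) minimal configuration, and then that no interior component can touch $u^0$ or $u^1$ unless both endpoints already lie in $\{u^0,u^1\}$. Throughout I write $h(z_{n_0},\dots,z_{n_1})=\sum_{i=n_0}^{n_1-1}h(z_i,z_{i+1})$ for the action on the segment, and I use that $u^0,u^1\in\calm_0^\per$ are minimal, hence minimal on every finite segment with matching endpoints.

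For the first assertion I would reduce an arbitrary competitor to one lying in $[u^0,u^1]$ by a clamping (lattice) argument. The core is the lattice inequality $h(z)+h(w)\ge h(z\wedge w)+h(z\vee w)$ for configurations $z,w$ on $[n_0,n_1]$, where $(z\wedge w)_i=\min(z_i,w_i)$ and $(z\vee w)_i=\max(z_i,w_i)$. This follows termwise from $(h_3)$: on any consecutive pair where $z$ and $w$ do not cross the two sides agree after relabeling, while on a crossing pair $(h_3)$ supplies the (strict) inequality. Now let $y$ be any competitor with $y_{n_0}=x_{n_0}$ and $y_{n_1}=x_{n_1}$ and no sign constraint. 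I first clamp from above: applying the lattice inequality to $y$ and the constant configuration $u^1$, and noting that $y\vee u^1$ has both endpoints equal to $u^1$ (since $x_{n_0},x_{n_1}\le u^1$), minimality of $u^1$ gives $h(u^1)\le h(y\vee u^1)$, whence $h(y)\ge h(y\wedge u^1)$. Clamping from below against $u^0$ in the same way yields $h(y\wedge u^1)\ge h(\tilde y)$ with $\tilde y:=(y\wedge u^1)\vee u^0\in[u^0,u^1]$ still carrying the endpoints $x_{n_0},x_{n_1}$. Hypothesis (2) then gives $h(x)\le h(\tilde y)\le h(y)$, so $x$ is minimal.

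For the moreover part I would argue by contradiction using Lemma \ref{lemm:estimate_periodic1}. Suppose some interior index $j$ has $x_j=u^1$ (the case $x_j=u^0$ is symmetric by the last sentence of Lemma \ref{lemm:estimate_periodic1}). Since varying only $x_j$ inside $[u^0,u^1]$ keeps the configuration admissible with unchanged endpoints, hypothesis (2) shows the triple $(x_{j-1},x_j,x_{j+1})$ is middle-minimal over $\xi\in[u^1-\delta,u^1]$; together with $x_j=u^1$ this is exactly the hypothesis of Lemma \ref{lemm:estimate_periodic1}, whose conclusion $x_j\ne u^1$ would be violated unless $x_{j-1}=u^1$ and $x_{j+1}=u^1$. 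Looking at the maximal block of consecutive indices equal to $u^1$ containing $j$, its left end $a$ (if $a>n_0$) is an interior index with $x_a=u^1$ but $x_{a-1}\ne u^1$, contradicting Lemma \ref{lemm:estimate_periodic1}; hence $a=n_0$, and symmetrically the right end is $n_1$. Thus $x_{n_0}=x_{n_1}=u^1$, contradicting the assumption that at least one endpoint lies outside $\{u^0,u^1\}$.

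The main obstacle is the first assertion: one must set up the lattice inequality from $(h_3)$ cleanly and, crucially, track the endpoints through the two clampings so that the intermediate configurations $y\vee u^1$ and $(y\wedge u^1)\wedge u^0$ acquire the constant endpoints needed to invoke minimality of $u^1$ and $u^0$. The propagation step in the moreover part is then a routine induction on the length of the $u^1$-block once Lemma \ref{lemm:estimate_periodic1} is in hand.
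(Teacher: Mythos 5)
Your argument is sound, but note that the paper itself offers no proof of this lemma: it simply defers to \cite{Yu2022} with the remark that the proof requires $(h_4)$ and $(h_5)$. What you have written is essentially the standard argument one finds in that reference (and going back to Aubry and Bangert): the lattice inequality $h(z)+h(w)\ge h(z\wedge w)+h(z\vee w)$ derived termwise from $(h_3)$, the two clampings against the minimal constant configurations $u^1$ and $u^0$ whose endpoints you correctly track, and then Lemma \ref{lemm:estimate_periodic1} to push any interior contact with $u^0$ or $u^1$ out to the ends of a maximal constant block. Your use of $(h_3)$ is consistent with the paper's remark, since $(h_5)$ implies $(h_3)$ and the dependence on $(h_4)$, $(h_5)$ enters only through Lemma \ref{lemm:estimate_periodic1}, which you are entitled to use as a black box here. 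Two small points you should make explicit. First, the paper's definition of a minimal finite configuration requires minimality on \emph{every} subsegment $[m_0,m_1]\subset[n_0,n_1]$, not just on the full segment; your clamping argument covers this, but only after the usual gluing step (clamp the subsegment competitor, extend it by $x$ outside $[m_0,m_1]$ to obtain an admissible competitor for hypothesis (2), and cancel the common terms). Second, in the crossing case of the lattice inequality you should record the case split ($z_i<w_i$, $z_{i+1}>w_{i+1}$ versus the reverse) so that the labels $\underline{\xi}<\bar{\xi}$, $\underline{\eta}<\bar{\eta}$ in $(h_3)$ are matched correctly; with that bookkeeping the termwise inequality is exactly $(h_3)$. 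Neither point is a genuine gap.
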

\begin{proof}[Proof of the two lemmas above]
See \cite{Yu2022}. These proofs require $(h_4)$ and $(h_5)$.
\end{proof}

Moreover, we can replace $\alpha=0$ with other rational numbers as seen below.
\begin{defi}[Definition 5.1, \cite{Yu2022}]
For $\alpha=p/q \in \Q \backslash \{0\}$, we set:
\begin{align*}
X_{\alpha}(x^-,x^+)&:=\{x=(x_i)_{i \in Z} \mid x_i^- \le x_i \le x_i^+ (i \in Z)\}.
\end{align*}
where $x^-$ and $x^+$ are $(q,p)$-periodic neighboring minimal configurations with $x^- < x^+$.
\end{defi}

\begin{defi}[Definition 5.2, \cite{Yu2022}]
Let $h_i \colon \R^2 \to \R$ be a continuous function for $i=1,2$.
For $h_1$ and $h_2$, we define $h_1 \ast h_2 \colon \R^2 \to \R$ by
\[
h_1 \ast h_2(x_1,x_2) = \min_{\xi \in \R} (h_1(x_1, \xi) + h_2(\xi, x_2)).
\]
We call this the \it{conjunction} of $h_1$ and $h_2$.
\end{defi}

Using the conjunction, we denote $H \colon \R^2 \to \R$ for $\alpha=p/q$  by:
\[
H(\xi,\xi') = h^{*q}(\xi,\xi'+p),
\]
where
$
h^{*q}(x,y)=h_1 \ast h_2 \ast \cdots \ast h_q \  (h_i=h).
$

\begin{defi}[Definition 5.5, \cite{Yu2022}]
For any $y =(y_i) \in X(x_0^-,x_0^+)$,
we define $x=(x_i) \in X_{\alpha}(x^-,x^+)$ as follows:
\begin{enumerate}
\item $x_{iq}=y_i + ip$ and
\item $(x_j)_{j=iq}^{(i+1)q}$ satisfies
\[
h(x_{iq}, \cdots, x_{(i+1)q}) =H(x_{iq},x_{(i+1)q}) = H(y_i,y_{i+1}),
\]
i.e.,  $(x_j)_{j=iq}^{(i+1)q}$ is a minimal configuration of $h$.
\end{enumerate}
\end{defi}
Although we focus on the case of rotation number $\alpha=0$, we may apply our proof to all rational rotation numbers from the following.
\begin{prop}[Proposition 5.6, \cite{Yu2022}]
\label{prop:alpha_configuration}
Let $y \in X(x_0^-,x_0^+)$ and $x \in X_{\alpha}(x^-,x^+)$ be defined as above.
If $y$ is a stationary configuration of $H$, 
then $x$ must be a stationary configuration of $h$.
\end{prop}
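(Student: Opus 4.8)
The plan is to verify the stationarity equation $\partial_2 h(x_{j-1},x_j)+\partial_1 h(x_j,x_{j+1})=0$ for every $j\in\Z$, splitting the indices into two families: the \emph{node} indices $j=iq$ (where $x_{iq}=y_i+ip$) and the \emph{interior} indices with $iq<j<(i+1)q$. For the interior indices the claim is immediate from the construction: each block $(x_\ell)_{\ell=iq}^{(i+1)q}$ is a minimal configuration of $h$ joining its two endpoints, hence by Mather's result recalled in the introduction it satisfies the Euler--Lagrange equation at every one of its interior nodes. Thus all the work sits at the node indices $j=iq$, and this is precisely where the hypothesis that $y$ is stationary for $H$ must be used.

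For a node $iq$ the main tool is the envelope formula for the conjunction. Writing the minimizing block of $h^{*q}(\xi,\eta)$ as $(\xi=z_0,z_1,\dots,z_{q-1},z_q=\eta)$, the interior points satisfy the first-order optimality conditions, so differentiating $h^{*q}(\xi,\eta)=\sum_{k=0}^{q-1}h(z_k,z_{k+1})$ and cancelling the interior contributions gives
\[
\partial_1 h^{*q}(\xi,\eta)=\partial_1 h(z_0,z_1),\qquad \partial_2 h^{*q}(\xi,\eta)=\partial_2 h(z_{q-1},z_q).
\]
Applying this to the two blocks adjacent to $iq$, and noting that $x_{iq+1}$ is the first interior point of the block from $iq$ to $(i+1)q$ while $x_{iq-1}$ is the last interior point of the block from $(i-1)q$ to $iq$, I obtain
\[
\partial_1 h(x_{iq},x_{iq+1})=\partial_1 h^{*q}(x_{iq},x_{(i+1)q}),\qquad \partial_2 h(x_{iq-1},x_{iq})=\partial_2 h^{*q}(x_{(i-1)q},x_{iq}).
\]

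Next I would convert the right-hand sides into derivatives of $H$. From $(h_1)$ one gets the diagonal translation invariance $h^{*q}(\xi+m,\eta+m)=h^{*q}(\xi,\eta)$ for every $m\in\Z$, so $\partial_1 h^{*q}$ and $\partial_2 h^{*q}$ are invariant under an integer diagonal shift. Using $x_{iq}=y_i+ip$ and shifting by $-ip$ and $-(i-1)p$ respectively, together with $H(\xi,\xi')=h^{*q}(\xi,\xi'+p)$, yields
\[
\partial_1 h^{*q}(x_{iq},x_{(i+1)q})=\partial_1 H(y_i,y_{i+1}),\qquad \partial_2 h^{*q}(x_{(i-1)q},x_{iq})=\partial_2 H(y_{i-1},y_i).
\]
Combining the two pairs of displayed identities, the stationarity equation for $h$ at $j=iq$ reduces exactly to $\partial_2 H(y_{i-1},y_i)+\partial_1 H(y_i,y_{i+1})=0$, which holds because $y$ is stationary for $H$. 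Together with the interior case this establishes stationarity of $x$ at every index.

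The hard part will be the rigorous justification of the envelope formula, i.e. the differentiability of the conjunction $h^{*q}$ and the identification of its partials with the first and last edges of the minimizing block. Under $(h_1)$--$(h_6)$ the conjunction inherits the twist structure of $h$, and using $(h_5)$ the minimizing block should be unique, which is what legitimizes the cancellation of the interior terms and makes Mather's one-sided derivatives into genuine derivatives. If uniqueness is delicate, the fallback is to run the whole argument with the left-sided derivatives guaranteed by Mather's theorem and to check that the two one-sided quantities meeting at each node $iq$ are compatible; once this differentiability input is secured, the remaining algebra is the routine bookkeeping carried out above.
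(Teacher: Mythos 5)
The paper does not prove this proposition; it is imported verbatim from Yu's paper (Proposition 5.6 of \cite{Yu2022}) and used as a black box, so there is no in-paper proof to compare against. Your argument is the standard and correct one: interior indices of each block are handled by minimality of the block, and at the node indices $j=iq$ the envelope identity $\partial_1 h^{*q}(\xi,\eta)=\partial_1 h(z_0,z_1)$, $\partial_2 h^{*q}(\xi,\eta)=\partial_2 h(z_{q-1},z_q)$ together with the diagonal periodicity from $(h_1)$ converts stationarity of $y$ for $H$ into stationarity of $x$ for $h$. The one genuine technical point is the one you already flag: under $(h_1)$--$(h_6)$ the conjunction $h^{*q}$ is only guaranteed one-sided partial derivatives (Mather), and the minimizing block need not be unique, so the envelope identity and the final cancellation must be run separately for the left- and right-sided derivatives; once that is done the bookkeeping closes as you describe, so the proposal is sound.
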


\subsection{Asymptotic behavior on $I$}

Let $X^0$ and $X^1$ be given by:
\begin{align*}
X^0&=\{x \in X \mid |x_i - u^0| \to 0 \ (i \to \infty), |x_i - u^0| \to 0 \ (i \to -\infty)\} \ and\\
X^1&=\{x \in X \mid |x_i - u^0| \to 0 \ (i \to -\infty), |x_i - u^0| \to 0 \ (i \to \infty)\}.
\end{align*}
By considering a local minimizer (precisely, a global minimizer in $X^0$ or $X^1$),
Yu \cite{Yu2022} proved the existence of heteroclinic configurations, which Bangert showed in \cite{Bangert1988},
as per the following proposition.
\begin{prop}[Theorem 3.4 and Proposition 3.5, \cite{Yu2022}]
There exists a stationary configuration $x $ in $X^0$ (resp. $X^1$) satisfying $I(x)=c_0$ (resp. $I(x)=c_1$),
where
\[c_0=\inf_{x \in X^0} I(x), \ c_1=\inf_{x \in X^1} I(x)\]
Moreover, $x$ is strictly monotone, i.e., $x_i<x_{i+1}$ (resp. $x_i>x_{i+1}$)  for all $i \in Z$.
\end{prop}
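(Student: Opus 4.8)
The plan is to prove the $X^0$ case by the direct method of the calculus of variations; the $X^1$ case is symmetric under the reflection $i \mapsto -i$. First I would check that $c_0 = \inf_{x \in X^0} I(x)$ is finite. For the upper bound, an explicit test configuration that equals $u^0$ for $i \ll 0$, climbs monotonically across $[u^0,u^1]$ on a bounded block of indices, and equals $u^1$ for $i \gg 0$ lies in $X^0$ and has finite action, so $c_0 < \infty$; for the lower bound, every $x \in X^0 \subset X$ satisfies $\sum_{i=-N}^{N-1} a_i(x) \ge -C|x_N - x_{-N}| \ge -C(u^1-u^0)$ by the second estimate of Lemma \ref{lemm:finite_bdd}, so $c_0 > -\infty$. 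Then take a minimizing sequence $x^n \in X^0$ with $I(x^n) \to c_0$. Because $I$ and $X^0$ are both invariant under the index shift $(\sigma x)_i = x_{i+1}$, I would \emph{normalize} each $x^n$ to anchor its transition at the origin: fixing the level $m = (u^0+u^1)/2$ and letting $j_n = \min\{i : x^n_i \ge m\}$, replace $x^n$ by its shift $x^n_{\,\cdot\,+j_n}$, so that after relabelling $x^n_{-1} < m \le x^n_0$. Since each coordinate lies in the compact interval $[u^0,u^1]$, the convergence \eqref{t_metric} makes $X$ sequentially compact, and a diagonal extraction yields a subsequence converging coordinatewise to some $x \in X$; the normalization passes to the limit as $x_{-1} \le m \le x_0$, so $x$ is neither the constant $u^0$ nor the constant $u^1$.

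The technical heart is \emph{lower semicontinuity with no loss of action at infinity}: I must show $I(x) \le \liminf_n I(x^n) = c_0$. For any $N$, coordinatewise convergence gives $\sum_{i=-N}^{N-1} a_i(x) = \lim_n \sum_{i=-N}^{N-1} a_i(x^n)$, so it suffices to bound the two tails $\sum_{i \ge N} a_i(x^n)$ and $\sum_{i < -N} a_i(x^n)$ from below by a quantity tending to $0$ as $N \to \infty$, uniformly in $n$. This is exactly where the two estimates of Lemma \ref{lemm:finite_bdd} enter: a tail of uniformly bounded action can make only a bounded total excursion away from $\{u^0,u^1\}$, since every step of such an excursion costs at least $\phi(\delta)>0$ by the estimate $\sum a_i \ge n\phi(\delta)$; hence beyond an index $N$ independent of $n$ the coordinates $x^n_i$ lie near $u^1$, and there $\sum_{i \ge N} a_i(x^n) \ge -C\,|u^1 - x^n_N|$ forces the tail to be small. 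Controlling this \emph{uniformly in} $n$ — equivalently, ruling out that the transition of $x^n$ escapes to $\pm\infty$, which is precisely what the normalization is designed to prevent — is the step I expect to be the main obstacle, as it is the classic difficulty in heteroclinic minimization. Granting it, $I(x) \le c_0 < \infty$.

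It remains to identify $x$. From $I(x) < \infty$ and Proposition \ref{prop:finite}, $x$ is asymptotic to $u^0$ or to $u^1$ at each end. A standard comparison argument combined with the lower semicontinuity above shows that $x$ is a minimal configuration: if some finite modification of $x$ within $[u^0,u^1]$ fixing two endpoints strictly lowered a block of the action, the same modification transplanted onto $x^n$ for large $n$ would push $I(x^n)$ below $c_0$, a contradiction. Since $x \ne u^0, u^1$, Lemma \ref{lemm:estimate_periodic2} then gives $x_i \notin \{u^0,u^1\}$ for every $i$. Strict monotonicity follows from the crossing property $(h_4)$: a minimal configuration with distinct asymptotic values cannot have $x_i \ge x_{i+1}$ at any $i$, since otherwise comparing $x$ with a suitable shift through the min/max competitor built from $x$ and that shift yields a strictly smaller action by $(h_5)$, contradicting minimality; hence $x_i < x_{i+1}$ for all $i$.

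Being increasing, bounded in $[u^0,u^1]$, and asymptotic to $\{u^0,u^1\}$ at each end, together with the anchoring $x_{-1} \le m \le x_0$, forces $x_i \to u^0$ as $i \to -\infty$ and $x_i \to u^1$ as $i \to +\infty$; thus $x \in X^0$, and combined with $I(x) \le c_0$ this gives $I(x) = c_0$. Finally, since $x$ is minimal it is locally minimal, so by Mather's theorem quoted above it is a \emph{stationary} configuration, $\partial_2 h(x_{i-1},x_i) + \partial_1 h(x_i,x_{i+1}) = 0$ for all $i$. This settles the $X^0$ case, and the $X^1$ case follows verbatim after the reflection $i \mapsto -i$, which exchanges the two asymptotic conditions and reverses the monotonicity.
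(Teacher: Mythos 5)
The paper does not prove this proposition at all --- it is imported verbatim from Yu (Theorem 3.4 and Proposition 3.5 of \cite{Yu2022}) and used as a black box --- so there is no in-paper argument to compare against. Your architecture (direct method, shift-normalization of the minimizing sequence, coordinatewise compactness of $X$, lower semicontinuity, then minimality $\Rightarrow$ stationarity via Mather, and monotonicity via the crossing/rearrangement argument from $(h_3)$--$(h_5)$) is the standard and correct skeleton for such heteroclinic minimization results. The monotonicity and identification steps at the end are essentially right, modulo one unaddressed preliminary: you treat $I(x)=\sum_{i\in\Z}a_i(x)$ as an unconditionally meaningful quantity on $X^0$, whereas the terms $a_i$ have no sign and the well-definedness of the bi-infinite sum is itself a lemma (the paper has to prove the analogue for $J$ in Lemma \ref{lemm:welldefi}).

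The genuine gap is the one you flag yourself and then ``grant'': the uniform-in-$n$ control of the action tails, equivalently no loss of action at infinity along the minimizing sequence. As stated, your mechanism does not close. Lemma \ref{lemm:finite_bdd} gives $\sum_{i\ge N}a_i(x^n)\ge -C\,|u^1-x^n_N|$, so to make the tail small you need the \emph{specific} coordinate $x^n_N$ to be close to $u^1$ for a fixed $N$ uniformly in $n$. The bounded-excursion count (total action bounded $\Rightarrow$ at most $O(1/\phi(\delta))$ indices $\delta$-far from $\{u^0,u^1\}$) only yields that \emph{some} index in a window is close to $u^1$, and it does not exclude that for $i\ge N$ the configuration sits near $u^0$ rather than $u^1$, nor that it makes arbitrarily late shallow dips of depth $\delta_n\to 0$ whose cost tends to $0$; the normalization anchors one crossing of the midlevel $m$ at the origin but does not by itself pin down which well the tail occupies or where later crossings sit. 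Closing this requires an additional idea --- e.g.\ first replacing each $x^n$ by a monotone competitor via cut-and-paste with $(h_3)$ (after which ``some index close to $u^1$'' upgrades to ``all later indices close to $u^1$''), or surgering the tails of $x^n$ beyond the good index $l_n$ onto constant configurations at $O(\delta)$ cost, or building $x$ as a limit of minimal segments with prescribed endpoints as Bangert does. Until one of these is carried out, the inequality $I(x)\le c_0$, and with it the whole existence claim, is not established.
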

Let:
\begin{align*}
&\calm^0(u^0,u^1)=\{x \in X \mid c_0=\inf_{x \in X^0} I(x)\} \ and\\
&\calm^1(u^0,u^1)=\{x \in X \mid c_1=\inf_{x \in X^1} I(x)\}.
\end{align*}

Set $c_\ast:=I(x^0) + I(x^1)$, where $x^i \in \calm^i$.
From the above and Lemma \ref{lemm:positive}, we immediately obtain the following corollary.
\begin{co}
\label{co:positive_het}
$c_{\ast}>0$
\end{co}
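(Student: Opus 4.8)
The plan is to prove strict positivity by gluing the two one-sided minimizers into a single configuration that is homoclinic to $u^0$ and then invoking the positivity already established for such configurations. First observe that one cannot argue \emph{componentwise}: a single heteroclinic from $u^0$ to $u^1$ need only satisfy $I(x^0)\ge -C|u^1-u^0|$ by the second estimate of Lemma \ref{lemm:finite_bdd} (applied to truncations, whose endpoints tend to $u^0$ and $u^1$), so $c_0$ and $c_1$ may individually fail to be positive. It is the sum $c_\ast=c_0+c_1$, which corresponds to a \emph{closed} excursion from a neighborhood of $u^0$ up to a neighborhood of $u^1$ and back, that we expect to control.

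Concretely, let $x^0\in\calm^0$ and $x^1\in\calm^1$ be the strictly monotone stationary heteroclinics supplied by the preceding proposition, so that $x^0_i\to u^0$ as $i\to-\infty$, $x^0_i\to u^1$ as $i\to+\infty$, and symmetrically for $x^1$. For large $N$ define the glued configuration $w^N$ by $w^N_i=x^0_{i+N}$ for $i\le 0$ and $w^N_i=x^1_{i-N}$ for $i>0$. Since $x^0_N\to u^1$ and $x^1_{1-N}\to u^1$, the single junction term $a_0(w^N)=h(x^0_N,x^1_{1-N})-c$ tends to $h(u^1,u^1)-c=0$, while the two tails converge to the full sums, so that $I(w^N)\to I(x^0)+I(x^1)=c_\ast$. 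Moreover each $w^N$ lies in $X$ and is homoclinic to $u^0$ with $w^N_0=x^0_N\neq u^0$, so Lemma \ref{lemm:positive} gives $I(w^N)>0$; passing to the limit yields $c_\ast\ge 0$.

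To upgrade this to strict positivity I would feed $w^N$ into the function $\phi$, which is positive at every positive argument. Fix $\delta\in\bigl(0,(u^1-u^0)/2\bigr)$. Truncate $w^N$ to $(w^N_i)_{i=-M}^{M}$; for $M,N$ large both endpoints lie within any prescribed distance of $u^0$, so replacing them by the common value $u^0$ produces, after relabeling, a configuration $\tilde x\in\hat X(2M)$ at a Lipschitz-small cost. Because $x^0$ increases monotonically from $u^0$ to $u^1$, the loop $\tilde x$ passes through the midpoint region, and hence $d(\tilde x)\ge\delta$; the definition of $\phi$ then forces the action of $\tilde x$ to be at least $\phi(\delta)$. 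Letting $M,N\to\infty$ and absorbing the vanishing junction, truncation, and closing errors yields $c_\ast\ge\phi(\delta)>0$.

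The main obstacle is purely quantitative bookkeeping: one must verify that the junction term, the truncated tails, and the endpoint-closing adjustment all vanish in the limit $M,N\to\infty$, uniformly enough that the fixed lower bound $\phi(\delta)$ survives. The only genuinely structural inputs beyond Lemma \ref{lemm:positive} are the positivity $\phi(\delta)>0$ for $\delta>0$ and the fact that the glued loop truly enters the middle band, i.e. $d(\tilde x)\ge\delta$; both are guaranteed by the strict monotonicity of $x^0$ and $x^1$.
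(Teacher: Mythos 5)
Your argument is correct in outline but takes a genuinely different route from the paper. The paper's proof is a two-line application of the Aubry crossing trick: since $x^0\in\calm^0$ and $x^1\in\calm^1$ are monotone in opposite directions they cross exactly once, so setting $x^{\pm}_i=\max/\min\{x^0_i,x^1_i\}$ and summing $(h_3)$ over the crossing gives $I(x^0)+I(x^1)\ge I(x^+)+I(x^-)$; the configuration $x^-$ is asymptotic to $u^0$ at both ends and $x^+$ to $u^1$, so Lemma \ref{lemm:positive} (and its $u^1$-analogue) gives strict positivity of each term on the right. You instead glue the two heteroclinics at a junction near $u^1$ to build homoclinic-type configurations $w^N$ with $I(w^N)\to c_\ast$, get $c_\ast\ge 0$ from Lemma \ref{lemm:positive}, and then truncate and close up $w^N$ into $\hat X(2M)$ to invoke $\phi(\delta)>0$. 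Both proofs ultimately rest on the same structural inputs ($(h_3)$, via Lemma \ref{lemm:positive} resp.\ via the positivity of $\phi$), but yours trades the paper's soft qualitative conclusion for the explicit quantitative bound $c_\ast\ge\phi(\delta)$, at the cost of the tail/junction bookkeeping that the max--min rearrangement avoids entirely.

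One step as written is false and needs a (trivial) repair: you fix an \emph{arbitrary} $\delta\in\bigl(0,(u^1-u^0)/2\bigr)$ and assert $d(\tilde x)\ge\delta$ because the monotone configuration ``passes through the midpoint region.'' There is no intermediate value theorem for discrete configurations: a strictly monotone sequence from $u^0$ to $u^1$ can jump over the band $[u^0+\delta,\,u^1-\delta]$ in a single step, in which case $d(\tilde x)<\delta$. What is true, and all you need, is that $d\ge\delta_0$ for the \emph{specific} value $\delta_0:=\min\{x^0_{i_0}-u^0,\,u^1-x^0_{i_0}\}>0$ attached to any fixed index $i_0$ with $u^0<x^0_{i_0}<u^1$ (such an index exists since $x^0$ is strictly monotone and nonconstant, and it survives the gluing and truncation for all large $N$ and $M$). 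With $\delta$ replaced by this $\delta_0$ the conclusion $c_\ast\ge\phi(\delta_0)>0$ goes through.
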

\begin{proof}
Choose  $x^0 \in \calm$ and $x^1 \in \calm$ arbitrarily.
From monotonicity, $x^0$ and $x^1$ intersect exactly once.
We define $x^+$ and $x^-$ in $X$ by
$x^+_i := \max\{x^0_i,x^1_i\}$ and $x^-_i := \min\{x^0_i,x^1_i\}$.
By $(h_3)$ and Lemma \ref{lemm:positive},
\[
c_{\ast}=I(x^0) + I(x^-) \ge I(x^+) + I(x^-) >0.
\]
This completes the proof.
\end{proof}

Next, we consider a minimal configuration under fixed ends.
For $0<a,b< (u^1-u^0)/2$, let:
\begin{align*}
Y^{\ast,0}(n,a,b) &= X(n) \cap \{x_0=u^0+a\} \cap  \{x_n=u^0+b\} \ and\\
Y^{\ast,1}(n,a,b) &= X(n) \cap \{x_0=u^1-a\} \cap  \{x_n=u^1-b\}
\end{align*}
and  $y^0(n,a,b)=(y^0_i) \in Y^{\ast,0}(n,a,b) $ be a finite configuration satisfying:
\[\sum_{i=0}^{n-1} {a_i}(y^0(n,a,b)) = \min_{x \in Y^{\ast,0}(n,a,b)} \sum_{i=0}^{n-1} {a_i}(x).
\]

The assertion of the next lemma may appear confusing, but it is useful for our proof in Section \ref{sec:pf}.
\begin{lemm}
\label{lemm:one_delta}
For any $m_0, m_1 \in \Z_{\ge 0} (m_0<m_1)$ and $\rho_0, \rho_1$, and $\delta$ with $\rho_0 +  \rho_1 + 2\delta <c_{\ast}/2C$,
there exists $n_0$ such that 
for all $n \ge n_0$,
there exists $l \in [m_0, n-m_1] \cap \Z$ satisfying
$|y^0_l(n,\rho_0, \rho_1) - u^0| \le \delta$.
A similar argument holds if $u^0$ and $y^0$ are replaced by $u^1$ and $y^1$.
\end{lemm}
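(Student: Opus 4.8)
The plan is to trap the minimal action between a cheap upper bound and a much larger lower bound whenever the conclusion fails. Write $y^0 := y^0(n,\rho_0,\rho_1)$ and recall $c=h(u^0,u^0)=h(u^1,u^1)$. First I would record the upper bound. Since $y^0$ minimises $\sum_{i=0}^{n-1}a_i$ over $Y^{\ast,0}(n,\rho_0,\rho_1)$, I compare it with the test configuration $t$ given by $t_0=u^0+\rho_0$, $t_i=u^0$ for $1\le i\le n-1$, and $t_n=u^0+\rho_1$. Every interior term vanishes and the two end terms obey $a_0(t)\le C\rho_0$, $a_{n-1}(t)\le C\rho_1$ by Lipschitz continuity, so
\[
\sum_{i=0}^{n-1}a_i(y^0)\le \sum_{i=0}^{n-1}a_i(t)\le C(\rho_0+\rho_1)<c_\ast/2,
\]
the last step being the hypothesis $\rho_0+\rho_1+2\delta<c_\ast/2C$.

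Now suppose for contradiction that $|y^0_l-u^0|>\delta$ for every $l\in[m_0,n-m_1]\cap\Z$; I would split into two cases. In Case A there is $l^\ast\in[m_0,n-m_1]$ with $|y^0_{l^\ast}-u^1|\le\delta$. We may assume $\delta<(u^1-u^0)/2$ (a smaller $\delta$ still satisfies the hypothesis and its conclusion implies the one for larger $\delta$), so that both endpoints satisfy $y^0_0,y^0_n<(u^0+u^1)/2<u^1-\delta$ and $l^\ast$ is interior: the configuration genuinely runs from the neighbourhood of $u^0$ up to that of $u^1$ and back. I would then extend the two halves to bi-infinite configurations, prepending $u^0$ and appending $u^1$ to $(y^0_0,\dots,y^0_{l^\ast})$ to get $z\in X^0$, and prepending $u^1$, appending $u^0$ to $(y^0_{l^\ast},\dots,y^0_n)$ to get $w\in X^1$. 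The four boundary junctions cost at most $C\rho_0,C\delta,C\delta,C\rho_1$, while $I(z)\ge c_0$ and $I(w)\ge c_1$ since $c_0,c_1$ are the infima over $X^0,X^1$. Subtracting the junction costs yields
\[
\sum_{i=0}^{n-1}a_i(y^0)\ge c_0+c_1-C(\rho_0+\rho_1+2\delta)=c_\ast-C(\rho_0+\rho_1+2\delta)>c_\ast/2,
\]
contradicting the upper bound for every $n$.

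In Case B no such $l^\ast$ exists, so together with the contradiction hypothesis the whole middle block satisfies $\min_j|y^0_l-u^j|>\delta$ for all $l\in[m_0,n-m_1]$. Here I would close the block up: $\tilde x:=(y^0_{m_0},\dots,y^0_{n-m_1},y^0_{m_0})$ lies in $\hat X(N+1)$ with $N:=n-m_0-m_1$, and all its entries stay $\delta$-away from $u^0$ and $u^1$, so the first inequality of Lemma~\ref{lemm:finite_bdd} gives $\sum_{i=0}^{N}a_i(\tilde x)\ge (N+1)\phi(\delta)$. Removing the single closing term (bounded by $2C(u^1-u^0)$) and estimating the two buffer blocks $[0,m_0]$ and $[n-m_1,n]$ from below by $-C(u^1-u^0)$ each, via the second inequality of Lemma~\ref{lemm:finite_bdd}, I obtain
\[
\sum_{i=0}^{n-1}a_i(y^0)\ge (N+1)\phi(\delta)-4C(u^1-u^0).
\]
Since $\phi(\delta)>0$, choosing $n_0$ so large that $(n_0-m_0-m_1+1)\phi(\delta)-4C(u^1-u^0)>C(\rho_0+\rho_1)$ forces this to exceed the upper bound for all $n\ge n_0$, again a contradiction. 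The statement for $u^1$ and $y^1$ follows by interchanging the roles of $u^0$ and $u^1$.

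I expect the main obstacle to be Case A: one must relate the action of a \emph{finite} excursion to the \emph{infinite} heteroclinic minima $c_0$ and $c_1$ with fully explicit control of the four junction terms, and check that in Case B the closing term and the buffer blocks contribute only an $n$-independent constant so that the linear $\phi(\delta)$-growth eventually dominates. The threshold $c_\ast/2C$ in the hypothesis is precisely what makes the upper bound $C(\rho_0+\rho_1)$ and the Case-A lower bound $c_\ast-C(\rho_0+\rho_1+2\delta)$ straddle $c_\ast/2$.
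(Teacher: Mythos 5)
Your proof is correct and follows essentially the same strategy as the paper's: the test-configuration upper bound $C(\rho_0+\rho_1)<c_\ast/2$, the linear $\phi(\delta)$ lower bound from Lemma~\ref{lemm:finite_bdd} when the configuration stays $\delta$-away from both $u^0$ and $u^1$, and the $c_\ast$ lower bound when it visits a $\delta$-neighbourhood of $u^1$. If anything your write-up is more complete than the paper's: the published argument only treats $m_0=m_1=0$ explicitly and compresses your Case~A into a one-line appeal to Corollary~\ref{co:positive_het}, whereas you handle general $m_0,m_1$ and make the splitting into two heteroclinic pieces, with explicit junction costs $C\rho_0, C\delta, C\delta, C\rho_1$, fully precise.
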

\begin{proof}
We first consider the case $m_0=m_1=0$.
Let $z=(z_i)_{i=0}^{n}$ be given by  $z_0=y^\ast_0$, $z_n=y^\ast_n$ and $z_{i} = u^0$ otherwise.
Clearly, for any $n \in \Z_{>0}$, 
\[\sum_{i=0}^{n} a_{i}(y^\ast_l(n,\rho_0, \rho_1)) \le
\sum_{i=0}^{n} a_{i}(z) \le C(\rho_1+\rho_2) < \frac{c_{\ast}}{2}.\]
On the other hand, Lemma \ref{lemm:finite_bdd} and the definition of $y^0$ imply that if 
$x \in X(n) \cap \{\min |x_i - u^j| \ge \delta\}$, then
\begin{align*}
\sum_{i=0}^{n} a_i(x) \ge n \phi(\delta) - C|\rho_1-\rho_0|.
\end{align*}
and $\sum_{i=0}^{n} a_i(x)>{c_{\ast}}/{2}$  for sufficiently large $n$, which is a contradiction.
The above remark implies that for sufficiently large $n$, there exists $l \in [0, n] \cap \Z$ 
such that $|y_l(n,\rho_0, \rho_1) - u^0| \le \delta$ or $|y_l(n,\rho_0, \rho_1) - u^1| \le \delta$.
That is, 
either of the following two conditions holds:
\begin{enumerate}
\item[(a)] There exists $l \in [0, n] \cap \Z$ such that $|x_l(n,\rho_0, \rho_1) - u^0| \le \delta$ and
$|x_i(n,\rho_0, \rho_1) - u^1|>\delta $ for all $i \in [0,n] \cap \Z$, or
\item[(b)] There exists $l \in [0, n] \cap \Z$ such that $|x_l(n,\rho_0, \rho_1) - u^1| \le \delta$. 
\end{enumerate}
To prove our claim for $m_0=m_1=0$, it suffices to show that the case of $(b)$  does not occur for sufficiently large $n$.
If $(b)$ holds, then by Corollary \ref{co:positive_het}, 
\begin{align*}
\sum_{i} a_i(x) > c_{\ast} -C(\rho_1+\rho_2) -2C\delta> \frac{c_{\ast}}{2},
\end{align*}
which is a contradiction. 
\if0
Next we fix $m_0,m_1 \in \Z_{>0}$ arbitrarily.
It also suffices to show that the case of $(b)$  does not occur.
By Lipschitz continuity, 
\[
\sum a_i(y^0(n,a,b)) + \sum a_i(y^0(n,a,b)) \le C(y^0_{m_0}-u^0)+ C(y^0_{n-m_1}-u^0)
\]
and if $x \in X(n) \cap \{\min |x_i - u^j| \ge \delta\}$, then
\begin{align*}
\sum_{i=m_0}^{n-m_1-1} a_i(x) \ge (n-m_1-m_0) \phi(\delta) - C|\rho_1-\rho_0|,
\end{align*}
which is contradiction for any $n$ satisfying $|n-m_1-m_0| \gg 1 $.
\fi
\end{proof}

In fact, $y^i(n,a,b)$ can be asymptotic to $u^i$ any number of times,
as per the following lemma.
\begin{lemm}
\label{lemm:finite_delta}
For any $k$, $\rho_0, \rho_1$ and $\delta$ with $\rho_0 +  \rho_1 + 2\delta <c_{\ast}/2C$,
there exists $n_0$ such that 
for all $n \ge n_0$,
there exist $l_1, \cdots, l_k \in [0, n] \cap \Z$ satisfying
$|y^0_{l_i}(n,\rho_0, \rho_1) - u^0| \le \delta$ for all $i =1, \cdots, k$.
A similar argument holds if $u^0$ and $y^0$ are replaced by $u^1$ and $y^1$.
\end{lemm}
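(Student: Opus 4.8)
The plan is to prove the assertion by contradiction through a linear-in-$n$ lower bound on the action, reusing the two mechanisms already isolated in the proof of Lemma~\ref{lemm:one_delta}. Throughout write $y=y^0(n,\rho_0,\rho_1)$ and $I_n=\sum_{i=0}^{n-1}a_i(y)$, and call an index $i$ \emph{good} if $|y_i-u^0|\le\delta$ and \emph{bad} otherwise. First I would record the two consequences of the hypothesis $\rho_0+\rho_1+2\delta<c_\ast/2C$. Comparing $y$ with the configuration that equals $u^0+\rho_0$, $u^0+\rho_1$ at the two endpoints and $u^0$ at every interior index gives $I_n\le C(\rho_0+\rho_1)<c_\ast/2$, exactly as in Lemma~\ref{lemm:one_delta}. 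Moreover, the very argument that rules out case (b) there — if some index lay within $\delta$ of $u^1$, the configuration would complete an excursion of cost at least $c_\ast-C(\rho_0+\rho_1)-2C\delta>c_\ast/2$, using Corollary~\ref{co:positive_het}, contradicting the upper bound — shows that $|y_i-u^1|>\delta$ for \emph{every} $i$. Hence every bad index automatically satisfies $\min_{j\in\{0,1\}}|y_i-u^j|>\delta$, so bad runs are precisely the configurations to which the lower estimate of Lemma~\ref{lemm:finite_bdd} applies.

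Next I would suppose, toward a contradiction, that fewer than $k$ indices are good. Then the at most $k-1$ good indices cut $\{0,\dots,n\}$ into at most $k$ maximal runs of bad indices, which together contain at least $n+2-k$ indices and hence at least $n+2-2k$ interior edges. On each bad run the closing-up estimate obtained in Lemma~\ref{lemm:one_delta} from Lemma~\ref{lemm:finite_bdd}, namely $\sum a_i\ge(\text{number of edges})\,\phi(\delta)-C|y_{\mathrm{end}}-y_{\mathrm{start}}|$, contributes at least $(\text{its edges})\,\phi(\delta)-C(u^1-u^0)$, while the remaining at most $2k-2$ edges incident to good indices are each bounded below by $-2C(u^1-u^0)$ via the Lipschitz bound on $h$ and $c=h(u^0,u^0)$. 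Summing gives $I_n\ge(n+2-2k)\,\phi(\delta)-O(k)\,C(u^1-u^0)$ with an absolute implied constant. Since $\phi(\delta)>0$, the right-hand side tends to $+\infty$, so it exceeds $c_\ast/2$ once $n\ge n_0$ for a suitable $n_0=n_0(k,\delta,C,u^1-u^0,c_\ast)$, contradicting $I_n<c_\ast/2$. Therefore at least $k$ indices are good for all $n\ge n_0$, which is the claim; the statement for $u^1$ and $y^1$ follows by the symmetric argument.

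The step I expect to be the main obstacle is the bookkeeping in this summation. One must be sure the $\phi(\delta)$-growth is collected over genuinely bad runs — which is exactly why the $u^1$-exclusion is needed, to guarantee distance at least $\delta$ from \emph{both} $u^0$ and $u^1$ and thus the applicability of Lemma~\ref{lemm:finite_bdd} — and that the finitely many transition edges adjacent to good indices, together with the per-run closing corrections, contribute only an $n$-independent error. The contradiction hypothesis is what keeps the number of runs at most $k$, so these corrections stay bounded and the linear term in $n$ dominates. A natural alternative is to iterate Lemma~\ref{lemm:one_delta}, splitting at a touching point and recursing on the two halves; this also works but requires tracking that the constraint $\rho_0+\rho_1+2\delta<c_\ast/2C$ propagates to the subpieces, which is cleaner to avoid by the global count above.
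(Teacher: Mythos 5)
Your proof is correct, and it rests on the same two pillars as the paper's own argument: the a priori upper bound $\sum_{i=0}^{n-1} a_i(y)\le C(\rho_0+\rho_1)<c_\ast/2$ obtained from the test configuration glued to $u^0$, and the linear lower bound of $\phi(\delta)$ per edge on segments staying $\delta$-away from both $u^0$ and $u^1$, with visits to a $\delta$-neighborhood of $u^1$ excluded via Corollary \ref{co:positive_het} exactly as in case (b) of Lemma \ref{lemm:one_delta}. Where you genuinely differ is the decomposition: the paper treats only $k=2$, by extracting a single bad subinterval of length $\lfloor n/2\rfloor$ and re-running the $k=1$ mechanism on it, and dismisses general $k$ with ``other cases are shown in the same way''; you instead perform one global count over all maximal bad runs, noting that fewer than $k$ good indices leave at least $n+2-2k$ bad interior edges, each contributing $\phi(\delta)$, against an $n$-independent correction of order $k\,C(u^1-u^0)$ coming from the per-run closing terms and the at most $2(k-1)$ edges incident to good indices. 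Your version buys two things: it handles all $k$ uniformly in a single step, and it sidesteps the point (left implicit in the paper) that applying Lemma \ref{lemm:one_delta} to a subinterval would require its endpoints to inherit the smallness constraint $\rho_0+\rho_1+2\delta<c_\ast/2C$, which the restriction of $y$ need not satisfy. The cost is slightly heavier bookkeeping, but every estimate you invoke is already established in Section \ref{sec:pre}, so the argument is complete.
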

\begin{proof}
We only discuss the case where $k=2$.
If we assume that the lemma is false,
then we set $j_1=\lfloor n/2 \rfloor$.
For some $j_0 \in  [0, n] $, there exists $x=(x_i)_{i=j_0}^{j_0+j_1}$
satisfying $|x_l(n,\rho_0, \rho_1) - u^0| > \delta$ for all $i \in [j_0,\cdots,j_0+j_1] \cap \Z$.
On the other hand, $(h_1)$ and Lemma \ref{lemm:one_delta} imply that for sufficiently large $j_1$, it holds that $\sum a_i(x) > c_{\ast}/2$,
which is a contradiction. 
Other cases are shown in the same way.
\end{proof}

\begin{lemm}
\label{lemm:finite_het}
For any $\epsilon >0$, there exist $n_0 \in \Z_{\ge 0}$ and $x \in \calm^0(u^0,u^1)$ such that $\sum_{i=0}^{n-1} a_i(x) \in (c_0 -\epsilon, c_0 + \epsilon)$ for all $n \ge n_0$.
\end{lemm}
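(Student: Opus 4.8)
The plan is to exploit the translation invariance of the action $I$ together with the convergence of the bi-infinite series $I(x)=\sum_{i\in\Z}a_i(x)$ at a minimizer. First I would fix any $x\in\calm^0(u^0,u^1)$; by the preceding Proposition (Theorem 3.4 and Proposition 3.5 of \cite{Yu2022}) such a minimizer exists, is strictly monotone, and satisfies $I(x)=c_0<\infty$ while being asymptotic to $u^0$ and $u^1$ at the two ends. Since $c=h(u^0,u^0)=h(u^1,u^1)$, the summands $a_i(x)=h(x_i,x_{i+1})-c$ tend to $0$ as $|i|\to\infty$, and because the total sum is finite the two one-sided tails
\[
L(k):=\sum_{j<k}a_j(x),\qquad R(m):=\sum_{j\ge m}a_j(x)
\]
are well defined with $L(k)\to 0$ as $k\to-\infty$ and $R(m)\to 0$ as $m\to+\infty$.

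The key observation is that $\calm^0(u^0,u^1)$ is invariant under the integer shift $(\tau^k x)_i:=x_{i+k}$: shifting preserves both the bounds $u^0\le x_i\le u^1$ and the asymptotic conditions defining $X^0$, and since $I$ sums over all of $\Z$ we have $I(\tau^k x)=I(x)=c_0$, so $\tau^k x\in\calm^0(u^0,u^1)$. Moreover, writing out the definition of $a_i$ gives $a_i(\tau^k x)=a_{i+k}(x)$, whence
\[
\sum_{i=0}^{n-1}a_i(\tau^k x)=\sum_{j=k}^{k+n-1}a_j(x)=c_0-L(k)-R(k+n),
\]
where the last equality uses $c_0=L(k)+\sum_{j=k}^{k+n-1}a_j(x)+R(k+n)$. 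This reduces the lemma to making $|L(k)|$ and $|R(k+n)|$ simultaneously small, uniformly in $n\ge n_0$.

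Given $\epsilon>0$, I would first choose $k\in\Z$ so negative that $|L(k)|<\epsilon/2$, and then pick $M$ so large that $|R(m)|<\epsilon/2$ for every $m\ge M$. Setting $n_0:=M-k\ (\ge 0)$ ensures $k+n\ge M$, hence $|R(k+n)|<\epsilon/2$, for all $n\ge n_0$. The displayed identity then yields
\[
\Bigl|\sum_{i=0}^{n-1}a_i(\tau^k x)-c_0\Bigr|\le |L(k)|+|R(k+n)|<\epsilon\qquad(\forall\, n\ge n_0),
\]
so the shifted configuration $\tau^k x\in\calm^0(u^0,u^1)$ together with $n_0$ proves the claim. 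Intuitively, translating so that the heteroclinic transition is pushed out to large positive indices makes the initial block $\{i<0\}$ sit near $u^0$ (negligible action) while the block $\{i\ge 0\}$ captures essentially the full transition.

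The only genuinely delicate point is the convergence of the bi-infinite series in the strong sense that \emph{both} one-sided tails vanish; the rest is bookkeeping. This is exactly why I would insist on taking a true minimizer with $I(x)=c_0<\infty$ attained on a heteroclinic configuration, so that $a_i(x)\to 0$ at both ends and the left and right partial sums converge separately, rather than an arbitrary element of $X^0$. If finiteness of $I$ were only available as a symmetric principal-value limit, the separation of tails would break down, so invoking the earlier existence proposition is the crucial input.
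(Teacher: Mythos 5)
Your overall strategy is the same as the paper's: normalize (shift) a minimizer $y\in\calm^0(u^0,u^1)$ so that $y_0$ is close to $u^0$, observe that $\sum_{i=0}^{n-1}a_i(y)-c_0$ equals minus the sum of the two one-sided tails, and make both tails small for all $n\ge n_0$. The paper phrases the normalization as ``choose $y\in\calm^0$ with $y_0-u^0<\epsilon/2C$ and $n$ large enough that $u^1-y_n<\epsilon/2C$'' rather than via an explicit shift operator $\tau^k$, but that difference is cosmetic.

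The gap is in your justification that the one-sided tails $L(k)$ and $R(m)$ are well defined and tend to $0$. You derive this from ``the total sum is finite'' together with ``$a_i(x)\to 0$'', and neither suffices: a bi-infinite series can have convergent symmetric partial sums and vanishing terms while both one-sided tails diverge (take $a_i=1/i$ for $i\ne 0$). You correctly flag this as the delicate point, but the input you invoke (that $I(x)=c_0$ is attained on a heteroclinic minimizer) does not by itself repair it. What actually controls the tails is the quantitative estimate of Lemma \ref{lemm:yu_lower}: for any segment, $\sum_{i=m}^{n-1}a_i(y)\ge -C|y_n-y_m|$, which together with the monotonicity of $y$ bounds the left tail below by $-C(y_0-u^0)$ and the right tail below by $-C(u^1-y_n)$; the matching upper bound on the tails follows from minimality of $y$ in $X^0$ (compare with the competitor equal to $u^0$ for $i<0$, to $y$ on $[0,n]$, and to $u^1$ for $i>n$, whose extra cost is at most $C((y_0-u^0)+(u^1-y_n))$ by Lipschitz continuity of $h$). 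This is exactly the estimate the paper cites; once you insert it, your bookkeeping with $L(k)$ and $R(k+n)$ goes through verbatim.
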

\begin{proof}
For sufficiently large  $n$,
there exists $y \in \calm^0$ such that:
\[
y_{0}-u^0 < \epsilon/2C, \text{and} \ u^1-y_n< \epsilon/2C.
\]
By Lemma \ref{lemm:yu_lower} and the monotonicity of $y$, 
\begin{align*}
\left| \sum_{i=0}^{n-1} a_i(y) -c_0 \right|
= \left| \sum_{i <0} a_i(y) +  \sum_{i >n} a_i(y) \right|
\le C( (y_0-u^0) + (u^1-y_n))
<  \epsilon
\end{align*}
as desired.
\end{proof}

Next, we assume a gap condition, i.e., 
\begin{align}
\label{gap}
\text{$(u^0,u^1) \backslash I_0$ and $(u^0,u^1) \backslash I_1$ are nonempty sets},
\end{align}
where $I_0=I_{0}^+(u^0,u^1)$ and $I_1=I_{0}^-(u^0,u^1)$.
We will check the properties of the heteroclinic configurations.
Under $\eqref{gap}$, the following lemma holds.
\begin{lemm}[Proposition 4.1, \cite{Yu2022}]
\label{lemm:gap_estimate}
For any $\epsilon>0$, there exist $\delta_i$ $(i=1,2,3,4)$ and positive constants  $e_0=e_0(\delta_1,\delta_2)$ and $e_1=e_1(\delta_3,\delta_4)$ satisfying:
\begin{align*}
\inf\{ I(x) \mid x \in X^0, x_0 =u_0+\delta_1  \text{ or }  x_0=u_1-\delta_2\} &\ge c_0+e_0 \ and\\
\inf\{ I(x) \mid x \in X^1, x_0 =u_1-\delta_3  \text{ or }  x_0=u_0+\delta_4\} &\ge c_1+e_1.
\end{align*}
\end{lemm}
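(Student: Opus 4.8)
The plan is to establish the $X^{0}$ estimate in full; the $X^{1}$ estimate then follows by the symmetric argument obtained from interchanging the roles of $u^{0}$ and $u^{1}$. For $v\in(u^{0},u^{1})$ write $m^{0}(v):=\inf\{I(x)\mid x\in X^{0},\ x_{0}=v\}$. Since imposing the constraint $x_{0}=v$ only shrinks the admissible class, $m^{0}(v)\ge c_{0}$ automatically, so the entire force of the lemma lies in the \emph{strict} inequality $m^{0}(v)>c_{0}$ at suitable values $v$, together with the possibility of choosing such $v$ within distance $\epsilon$ of each endpoint. Granting this at two values $v_{1}=u^{0}+\delta_{1}$ and $v_{2}=u^{1}-\delta_{2}$, the first displayed inequality follows at once with $e_{0}:=\min\{m^{0}(v_{1}),m^{0}(v_{2})\}-c_{0}>0$.

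The key implication I would prove is: if $v\notin I_{0}$ then $m^{0}(v)>c_{0}$. Arguing by contradiction, suppose $m^{0}(v)=c_{0}$ and pick a minimizing sequence $x^{n}\in X^{0}$ with $x^{n}_{0}=v$ and $I(x^{n})\to c_{0}$. As each $x^{n}$ takes values in the compact interval $[u^{0},u^{1}]$, a diagonal extraction yields a subsequence converging in the sense of \eqref{t_metric} to some $x^{\ast}\in X$ with $x^{\ast}_{0}=v$, a point strictly interior to $(u^{0},u^{1})$. By Proposition \ref{prop:finite} the limit $x^{\ast}$ is asymptotic to $u^{0}$ or to $u^{1}$ at each of $\pm\infty$. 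In the \emph{non-degenerate} case, where $x^{\ast}$ is asymptotic to $u^{0}$ at $-\infty$ and to $u^{1}$ at $+\infty$, the tails of $x^{n}$ carry vanishing energy (Lemma \ref{lemm:finite_het}), so $I(x^{\ast})=c_{0}$ and $x^{\ast}\in X^{0}$; thus $x^{\ast}\in\calm^{0}$ and $v=x^{\ast}_{0}\in I_{0}$, contradicting the choice of $v$.

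Excluding the remaining, degenerate asymptotics is the heart of the matter and the step I expect to be the main obstacle, since it is exactly the familiar loss of compactness: in passing to the pointwise limit a complete transition may escape to $\pm\infty$. Suppose, say, that $x^{\ast}$ is asymptotic to $u^{0}$ at \emph{both} ends (the cases $u^{1}$--$u^{1}$ and the reversed connection $u^{1}$--$u^{0}$ being handled identically, the last one losing two transitions). Then a tail of each $x^{n}$ performs an essentially complete ascent from a neighborhood of $u^{0}$ to a neighborhood of $u^{1}$ whose index range runs off to $+\infty$; by the lower bounds of Lemmas \ref{lemm:finite_bdd} and \ref{lemm:one_delta} together with the definition of $c_{0}$ this escaping excursion carries energy at least $c_{0}-o(1)$, while $x^{\ast}$ itself is a non-constant configuration asymptotic to $u^{0}$ and so has $I(x^{\ast})>0$ by Lemma \ref{lemm:positive}. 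Consequently $\lim_{n}I(x^{n})\ge I(x^{\ast})+c_{0}-o(1)>c_{0}$, contradicting $I(x^{n})\to c_{0}$. Quantitatively I would fix a small $\delta$, use Lemma \ref{lemm:finite_het} to trap all but an $\epsilon$ of the energy of $x^{\ast}$ in a window $[-N,N]$, and apply Lemma \ref{lemm:one_delta} to locate the last index near $u^{0}$ before the escaping excursion, thereby splitting $I(x^{n})$ cleanly into a part tending to $I(x^{\ast})$ and a genuinely positive escaping part; Corollary \ref{co:positive_het} guarantees the latter cannot be compensated. This closes the contradiction and proves $m^{0}(v)>c_{0}$ for every $v\notin I_{0}$.

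It remains to place barrier values near each endpoint. The gap condition \eqref{gap} furnishes at least one $w\in(u^{0},u^{1})\setminus I_{0}$. Because $\calm^{0}$ is invariant under the index shift $(Sx)_{i}=x_{i+1}$ and minimal heteroclinics cross at most once, the forbidden set $(u^{0},u^{1})\setminus I_{0}$ is itself shift-compatible, and the gap around $w$ is carried by the iterates of the induced monotone correspondence $x_{0}\mapsto x_{1}$ to gaps of $I_{0}$ accumulating at both $u^{0}$ and $u^{1}$; this is where the hypothesis of a single gap is upgraded to forbidden values arbitrarily close to the endpoints. Hence, given $\epsilon>0$, I may pick $\delta_{1},\delta_{2}\in(0,\epsilon)$ with $u^{0}+\delta_{1},\,u^{1}-\delta_{2}\notin I_{0}$, apply the key implication at $v_{1}=u^{0}+\delta_{1}$ and $v_{2}=u^{1}-\delta_{2}$, and set $e_{0}=\min\{m^{0}(v_{1}),m^{0}(v_{2})\}-c_{0}$. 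The identical construction applied to $I_{1}$ and $X^{1}$ produces $\delta_{3},\delta_{4}$ and $e_{1}$, completing the proof.
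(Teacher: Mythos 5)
The paper never proves this lemma: it is imported verbatim as Proposition 4.1 of \cite{Yu2022}, so there is no internal proof to compare against. Your argument is the standard one and, as far as I can tell, the route Yu's proof takes: reduce to showing that $v\notin I_{0}$ forces $\inf\{I(x)\mid x\in X^{0},\,x_{0}=v\}>c_{0}$, prove this by extracting a pointwise limit of a minimizing sequence and excluding loss of compactness (an escaping transition costs at least $c_{0}-C\delta$, seen by appending a constant tail at $u^{0}$ and using $c_{0}=\inf_{X^{0}}I$, while the surviving limit has strictly positive action by Lemma \ref{lemm:positive}), and then propagate the single gap guaranteed by \eqref{gap} to gaps accumulating at both endpoints via the shift. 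The strategy is sound; I flag three points where the sketch leans on facts that themselves need an argument. First, citing Lemma \ref{lemm:finite_het} for ``tails carry vanishing energy'' is a misattribution: that lemma concerns elements of $\calm^{0}$, and what you actually need is the second estimate of Lemma \ref{lemm:finite_bdd}, $\sum a_{i}(x)\ge -C|x_{n}-x_{0}|$, applied to tail segments whose endpoints approach $u^{0}$ or $u^{1}$; the same estimate is what lets you apply Proposition \ref{prop:finite} to the limit, whose action you only know to have bounded partial sums a priori. Second, the well-definedness and strict monotonicity of the correspondence $x_{0}\mapsto x_{1}$ on $I_{0}$ requires that $\calm^{0}$ be totally ordered; this holds because two transversally crossing elements of $\calm^{0}$ would, via $(h_3)$/$(h_5)$ and the min/max construction already used in Corollary \ref{co:positive_het}, yield an element of $X^{0}$ with action below $c_{0}$. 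With that supplied, the iterated images of a gap do accumulate at both $u^{0}$ and $u^{1}$ because gap endpoints lie in $I_{0}$ ($I_{0}$ accumulates at both ends and $I_{0}\cup\{u^{0},u^{1}\}$ is closed); this is precisely the content the paper asserts separately, without proof, in Remark \ref{rem:rho_small}. Third, you read the otherwise idle $\epsilon$ in the statement as requiring $\delta_{i}<\epsilon$; that is the interpretation the application in Step 1 of Section \ref{sec:pf} needs, so it is the right one, but it is also exactly what makes the endpoint-accumulation step non-optional.
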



\section{Statements and proofs of our main theorem}
\label{sec:pf}
\subsection{Variational settings and properties of action $J$}
\label{subsec:setting}
Let $(u^0,u^1)$ be a neighboring pair with $u^0,u^1 \in \calm^\per_{0}$ and set:
\begin{align}
\label{def:kp}
\begin{split}
K&=\left\{ k=(k_i)_{i \in \Z} \subset \Z \mid k_0=0,  k_i < k_{i+1} \right\}, \ and\\
P&=\left\{ \rho=(\rho_i)_{i \in \Z} \subset \R_{>0} \mid 0 <\rho_i <(u^1-u^0)/2, \  \sum_{i \in \Z} \rho_i <\infty  \right\}.
\end{split}
\end{align}

For $k \in K$ and $\rho \in P$, the set $\xkr$ is given by:
\[
\xkr =\left( \bigcap_{i \equiv 0,1}  Y^0(k_i,\rho_i) \right) \cap \left( \bigcap_{i \equiv -1,2}  Y^1(k_i,\rho_i) \right) 
\]
where
\[
Y^j(l,p) = \{x \in X \mid |x_l - u^j| \le p \} \ (j=0,1)
\]
 and $ a \equiv b$ means $a \equiv b \ (\mathrm{mod} \ 4)$.
 (See $\eqref{eq:x}$ for the definition of $X$.)
 
 \if0
\begin{figure}[H]
    \centering
    \includegraphics[width=8.5cm]{infinite_transition.png}
  \caption{An element of $\xkr$}
\end{figure}
\fi

To see the topological properties of $\xkr$, we first show the following lemma.
\begin{lemm}
\label{lemm:cpt}
The set $X$  is sequentially compact.
\end{lemm}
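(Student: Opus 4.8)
The plan is to observe that the convergence \eqref{t_metric} defining the topology on $\R^\Z$ is precisely coordinatewise (pointwise) convergence, so that $X=\{x \mid u^0 \le x_i \le u^1\}$ is a countable product of copies of the compact interval $[u^0,u^1]$. Sequential compactness then follows from the classical diagonal extraction argument, which here substitutes for Tychonoff's theorem because the index set $\Z$ is countable. So the whole proof reduces to a careful Bolzano--Weierstrass diagonal construction.

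Concretely, I would fix an arbitrary sequence $(x^n)_{n \in \N} \subset X$ and enumerate $\Z=\{j_1,j_2,j_3,\dots\}$. Since each coordinate sequence lies in the compact interval $[u^0,u^1]$, the Bolzano--Weierstrass theorem gives an infinite set $N_1 \subset \N$ along which the $j_1$-th coordinate converges to some $x_{j_1} \in [u^0,u^1]$. Proceeding inductively, given an infinite set $N_{m-1}$, I extract a further infinite subset $N_m \subset N_{m-1}$ along which the $j_m$-th coordinate converges to some $x_{j_m} \in [u^0,u^1]$. Letting $n_m$ denote the $m$-th smallest element of $N_m$ produces a strictly increasing sequence $(n_m)_m$ with the property that, for every fixed $M$, the tail $(n_m)_{m \ge M}$ is a subsequence of $N_M$ (because $N_m \subset N_M$ whenever $m \ge M$). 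Hence the diagonal subsequence $(x^{n_m})_m$ converges in each fixed coordinate $j_M$ to $x_{j_M}$.

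Finally I would set $x=(x_i)_{i \in \Z}$ with the coordinate limits obtained above. Since every limit lies in the closed interval $[u^0,u^1]$, we have $u^0 \le x_i \le u^1$ for all $i$, so $x \in X$; and by construction $\lim_{m \to \infty}|x^{n_m}_i - x_i|=0$ for every $i \in \Z$, which is exactly $x^{n_m} \to x$ in the sense of \eqref{t_metric}. This exhibits a convergent subsequence with limit in $X$, proving sequential compactness. The argument has no genuine obstacle beyond the standard diagonal bookkeeping; the only point requiring care is verifying that the diagonal subsequence is eventually contained in each $N_M$, so that coordinatewise convergence is secured simultaneously for all indices, and that the limit remains in $X$ by closedness of $[u^0,u^1]$.
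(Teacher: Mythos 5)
Your proof is correct, but it takes a genuinely different route from the paper. The paper invokes Tychonoff's theorem to get compactness of the product $X=\prod_{i\in\Z}[u^0,u^1]$ and then proves metrizability by exhibiting the explicit metric $d(x,y)=\sum_{i\in\Z}|x_i-y_i|/2^{|i|}$ and checking that $d$-convergence coincides with the coordinatewise convergence \eqref{t_metric}; sequential compactness then follows because compact metrizable spaces are sequentially compact. You instead bypass both Tychonoff and the metric entirely, running a direct Bolzano--Weierstrass diagonal extraction over a countable enumeration of $\Z$, and your bookkeeping (each tail of the diagonal sequence eventually lies in every $N_M$, and the coordinate limits stay in the closed interval so the limit belongs to $X$) is exactly the point that needs care and is handled correctly. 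Your argument is more elementary and self-contained, exploiting the countability of the index set where the paper reaches for a general theorem; the paper's argument buys slightly more, namely that $X$ is a compact metrizable space with a concrete metric inducing the topology \eqref{t_metric}, which is a cleaner package even though the explicit metric is not essential elsewhere in the paper. Either way the subsequent corollary (that $\xkr$, being closed in $X$, is sequentially compact) goes through unchanged.
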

\begin{proof}
By Tychonoff's theorem, $X$ is a compact set.
It suffices to check that $X$ is metrizable.
Let $d \colon X \times X  \to \R$ be given by
\begin{align*}
d(x,y) &= \sum_{i \in \Z} \frac{|x_i - y_i|}{2^{|i|}}
\end{align*}
Clearly, $d$ is a metric function.
We show that convergence on  $d$ and  $\eqref{t_metric}$ is equivalent.
Since for all $i \in \Z$
\[
\frac{|x_i - y_i|}{2^{|i|}} \le  d(x,y),
\]
it is sufficient to show that for each $x,y \in X$, the function $d(x,y)$ goes to $0$ if $\eqref{t_metric}$ holds.
Let $(x^n)$ be a convergence sequence to $y$.
There is a constant $M >0$ such that for all $j \in \Z$ 
\begin{align*}
d(x^{n},y) \le  c(j,n)+ \frac{M}{2^{|j|}}
\end{align*}
where
$c(j,n) =  \sum_{i \le |j|} {|x_i^{n} - y_i|}/{2^{|i|}}$.
Notice that for each $j \in \Z$, $c(j,n) \to 0$ as $n \to \infty$.
Thus, for any $\epsilon > 0$, we can take $i_0$ and $n_0$ such that
$ {M}/{2^{|i_0|}}< {\epsilon}/{2}$ and  $c(i_0,n_0)  < {\epsilon}/{2}$,
thus completing the proof.
\end{proof}
Clearly, $\xkr$ is a closed subset of $X$, so we find the following.
\begin{co}
The set $\xkr$ is sequentially compact.
\end{co}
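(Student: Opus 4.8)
The plan is to obtain the corollary from Lemma~\ref{lemm:cpt} by invoking the standard fact that a closed subset of a sequentially compact space is itself sequentially compact. Thus the only substantive point is to justify the remark made just before the statement, namely that $\xkr$ is a closed subset of $X$; once this is in hand the conclusion is immediate.

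First I would check that each building block $Y^j(l,p)=\{x \in X \mid |x_l - u^j| \le p\}$ is closed in $X$. For a fixed index $l$, the coordinate projection $\pi_l \colon X \to \R$, $x \mapsto x_l$, is continuous for the metric $d$ of Lemma~\ref{lemm:cpt}: in the proof of that lemma it was shown that convergence in $d$ is equivalent to the coordinatewise convergence \eqref{t_metric}, and under \eqref{t_metric} one has $x_l^n \to x_l$ for every $l$. Hence $Y^j(l,p)=\pi_l^{-1}\bigl([u^j-p,\,u^j+p]\bigr)$ is the preimage of a closed interval under a continuous map, and so is closed. Since $\xkr$ is defined as the intersection of the sets $Y^0(k_i,\rho_i)$ over $i \equiv 0,1$ and $Y^1(k_i,\rho_i)$ over $i \equiv -1,2$, and an arbitrary intersection of closed sets is closed, $\xkr$ is closed in $X$.

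To finish, I would take an arbitrary sequence $(x^n)$ in $\xkr$. Regarding it as a sequence in $X$ and applying Lemma~\ref{lemm:cpt}, it has a subsequence converging in $(X,d)$ to some $x \in X$; because $\xkr$ is closed, the limit $x$ belongs to $\xkr$. This exhibits a subsequence converging to a point of $\xkr$, which is exactly sequential compactness. I do not anticipate any genuine obstacle here: the argument is purely topological, and the single nontrivial ingredient—continuity of the coordinate projections, equivalently the equivalence of $d$-convergence and \eqref{t_metric}—has already been established in the proof of Lemma~\ref{lemm:cpt}.
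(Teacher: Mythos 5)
Your argument is correct and is essentially the paper's own: the paper simply notes that $\xkr$ is a closed subset of the sequentially compact space $X$ from Lemma \ref{lemm:cpt} and concludes immediately. You merely spell out the (omitted) justification that each $Y^j(l,p)$ is closed as the preimage of a closed interval under a coordinate projection, which is a welcome but not substantively different addition.
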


Now we define a renormalized action $J \colon \R^\Z \to \R$ on $\xkr$ by:
\begin{align}
\label{actionJ}
J(x) :=J_{k,\rho}(x) = \sum_{i \in \Z} A_i(x),
\end{align}
where
\begin{align}
\label{action}
A_i(x)=
\begin{cases}
      \{\sum_{j \in I_i}  h(x_j,x_{j+1})\} - |I_i|c & { i \equiv 0,2} \\
      \{ \sum_{j \in I_i} h(x_j,x_{j+1}) \} -c_i^{+}& {i \equiv 1}\\
      \{ \sum_{j \in I_i} h(x_j,x_{j+1} ) \} - c_i^{-}&  {i \equiv -1}
\end{cases},
\end{align}
$I_i = \{k_i,k_i+1,\dots,k_{i+1}-1\}$ and  $|I_i|=k_{i+1}-k_i$.
The notations $c^+$ and $c^-$ represent
\begin{align*}
c^+_i &= \min_{x \in Y^0(k_i,\rho_i) \cap Y^1(k_{i+1}, \rho_{i+1})}   \sum_{j \in I_i} h(x_j,x_{j+1}) \ and \\
c^-_i &= \min_{x \in Y^1(k_i,\rho_i) \cap Y^0(k_{i+1}, \rho_{i+1})}   \sum_{j \in I_i} h(x_j,x_{j+1}).
\end{align*}
Clearly, $A_i(x) \ge 0$ for $i \equiv \pm 1$.
Notice that $(h_1)$ implies that the values of $c^{\pm}_i$ depend on $\rho_i, \rho_{i+1}$ and $k_{i+1}-k_i$.

To check the basic properties of $J$ through the following discussion,
we first show that for an infinite transition orbit, say $x$, $J(x)$ can be finite unlike $I(x)$.
\begin{lemm}
\label{lemm:upper}
If $\rho \in P$, then there exist $y \in \xkr$ and a constant $M$ such that $J(y) \le M$ for all $k \in K$.
\end{lemm}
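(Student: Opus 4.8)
The plan is to build, for each $k \in K$, a specific configuration $y = y(k) \in \xkr$ by placing block minimizers on the transition intervals and constant configurations on the plateau intervals, and then to bound $J(y)$ by a constant depending only on $\rho$ and the Lipschitz constant $C$. Concretely, on each transition block $I_i$ with $i \equiv \pm 1$ let $\hat{x}^{(i)} = (\hat{x}^{(i)}_j)_{j=k_i}^{k_{i+1}}$ be a minimizer realizing $c^+_i$ (for $i \equiv 1$) or $c^-_i$ (for $i \equiv -1$); such a minimizer exists because $\sum_{j \in I_i} h(x_j,x_{j+1})$ depends only on the finitely many coordinates $x_{k_i}, \ldots, x_{k_{i+1}}$, which range over the compact set cut out by $[u^0,u^1]$ together with the endpoint constraints, and $h$ is continuous. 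Put $y_j := \hat{x}^{(i)}_j$ for $k_i \le j \le k_{i+1}$. On each plateau block $I_i$ ($i \equiv 0,2$) set the strict interior to the matching constant, $y_j := u^0$ when $i \equiv 0$ and $y_j := u^1$ when $i \equiv 2$ for $k_i < j < k_{i+1}$; the two endpoints $y_{k_i}, y_{k_{i+1}}$ are already fixed by the adjacent transition blocks. Since plateau and transition blocks alternate, every boundary $k_i$ sits at an end of some transition block, so each $y_j$ is assigned exactly once and the definition is consistent.

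The verification then splits into three short steps. First, $y \in \xkr$: all $y_j$ lie in $[u^0,u^1]$ (transition values come from minimizers, which lie in $X$, and plateau values are $u^0$ or $u^1$), and at each index $k_i$ the value is inherited from a minimizer endpoint obeying the appropriate $Y^0$ or $Y^1$ bound, which is exactly the membership condition of $\xkr$ at $k_i$. Second, every transition term vanishes: the defining property of $\hat{x}^{(i)}$ gives $\sum_{j \in I_i} h(y_j,y_{j+1}) = c^+_i$ (resp. $c^-_i$), whence $A_i(y) = 0$ for $i \equiv \pm 1$; this is precisely the role of subtracting $c^{\pm}_i$ in the renormalization. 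Third, each plateau term is controlled by its endpoint deviations: writing $a = y_{k_i}$, $b = y_{k_{i+1}}$ and using that the interior equals $u^0$, the $|I_i|-2$ interior pairs each contribute $h(u^0,u^0)=c$ and cancel against the normalization $-|I_i|c$, leaving
\[
A_i(y) = \bigl(h(a,u^0) - c\bigr) + \bigl(h(u^0,b) - c\bigr) \le C|a-u^0| + C|b-u^0| \le C(\rho_i + \rho_{i+1}),
\]
by Lipschitz continuity and the bounds $|a-u^0|\le\rho_i$, $|b-u^0|\le\rho_{i+1}$ inherited from the neighboring transitions (the degenerate case $|I_i|=1$ gives $A_i(y)=h(a,b)-c$ with the same estimate, and $i \equiv 2$ is identical with $u^1$ in place of $u^0$).

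Summing over all blocks and discarding the vanishing transition contributions gives
\[
J(y) = \sum_{i \equiv 0,2} A_i(y) \le C\sum_{i \in \Z}(\rho_i + \rho_{i+1}) = 2C\sum_{i \in \Z}\rho_i =: M,
\]
which is finite since $\rho \in P$ and, crucially, independent of $k$; this is the asserted uniform bound. I expect the only genuine obstacle to be the bookkeeping that makes the glued configuration well defined at the shared endpoints and that confirms the renormalizing constants $c^{\pm}_i$ annihilate the transition terms exactly; once that is settled, the plateau estimate is a one-line Lipschitz computation and the uniform summability follows at once from $\sum_i \rho_i < \infty$.
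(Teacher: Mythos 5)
Your construction is exactly the paper's: block minimizers realizing $c^{\pm}_i$ on the transition intervals, constants $u^0$ or $u^1$ on the interiors of the plateau intervals, so that $A_i(y)=0$ for $i\equiv\pm1$ and the plateau terms are bounded by Lipschitz continuity through the endpoint deviations $\rho_i$. The argument is correct and matches the paper's proof, with only a harmless difference in the final constant ($2C\sum_i\rho_i$ versus the paper's $C\sum_i\rho_i$).
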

\begin{proof}
For each $i \equiv  1$, choose some $z^{+i}=\{z^{+i}_j\}_{j \in I_i}$ satisfying $c^+_i = \sum_{j \in I_i} h(z^{+i}_j,z^{+i}_{j+1})$.
Define  $z^{-i}=\{z^{-i}_j\}_{j \in I_i}$ for each $i \equiv  -1$ in a similar way.
We construct a test sequence $y=(y_i)_{i \in \Z}$ as follows:
\begin{align}
\begin{split}
\label{test}
y_j=
\begin{cases}
      u^0& \text{if $j \in I_i \backslash \{k_i\}$ and $i \equiv 0$}\\
      z^{+i}_j&\text{if $j \in I_i \cup \{k_{i+1}\}$ and $i \equiv 1$}\\
      u^i & \text{if $j \in I_i \backslash \{k_i\}$ and $i \equiv 2$}\\
        z^{-i}_j& \text{if $j \in I_i \cup \{k_{i+1}\}$ and $i \equiv -1$}
\end{cases}.
\end{split}
\end{align}
Since $A_i(y)=0$ for $i \equiv \pm 1$:
\begin{align*}
J(y)
&= \sum_{i \in \Z} A_i(x)= \sum_{i \in 2\Z} A_i(x)\le C \sum_{i \in \Z} \rho_i.
\end{align*}
This completes the proof.
\end{proof}

The above lemma implies that $J$ overcomes the problem referred to in Proposition \ref{prop:finite}.
Next, we show that $J$ is bounded below.

\begin{lemm}
\label{lemm:lower}
If $\rho \in P$, then $J(x) > -\infty$ for all $x \in \xkr$.
\end{lemm}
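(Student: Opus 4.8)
The plan is to exploit the block structure of $J$ and treat the three cases of $A_i$ in \eqref{action} separately. As already observed right after \eqref{action}, the terms with $i \equiv \pm 1$ are nonnegative: for such $i$ every $x \in \xkr$ lies precisely in the set over which $c_i^{+}$ (resp.\ $c_i^{-}$) is defined as a minimum, so $\sum_{j \in I_i} h(x_j,x_{j+1}) \ge c_i^{\pm}$. Hence these terms only help and can be discarded when bounding $J$ from below. The only terms that may be negative are those with $i \equiv 0,2$, for which $A_i(x) = \sum_{j \in I_i} h(x_j,x_{j+1}) - |I_i|c = \sum_{j=k_i}^{k_{i+1}-1} a_j(x)$.

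First I would bound each block $A_i$ with $i \equiv 0,2$ from below by applying the second estimate of Lemma \ref{lemm:finite_bdd}. Shifting indices so that the finite segment $(x_{k_i},\dots,x_{k_{i+1}})$ is relabeled as a configuration in $X(|I_i|)$, that estimate gives directly $A_i(x) \ge -C\,|x_{k_{i+1}} - x_{k_i}|$. Next I would control $|x_{k_{i+1}} - x_{k_i}|$ using the membership constraints built into $\xkr$. For $i \equiv 0$ both endpoints sit near $u^0$: the definition of $\xkr$ forces $|x_{k_i}-u^0|\le \rho_i$ (since $i \equiv 0$) and $|x_{k_{i+1}}-u^0|\le \rho_{i+1}$ (since $i+1 \equiv 1$), so the triangle inequality yields $|x_{k_{i+1}}-x_{k_i}| \le \rho_i + \rho_{i+1}$. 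For $i \equiv 2$ the analogous bookkeeping near $u^1$ (both $k_i$ and $k_{i+1}$ land on residues $2$ and $3$, hence in $Y^1$) gives the identical bound. Thus $A_i(x) \ge -C(\rho_i+\rho_{i+1})$ for every $i \equiv 0,2$.

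Finally, summing over all $i$ and dropping the nonnegative $i \equiv \pm 1$ contributions,
\[
J(x) = \sum_{i \in \Z} A_i(x) \ge -\sum_{i \equiv 0,2} C(\rho_i+\rho_{i+1}) \ge -2C \sum_{i \in \Z} \rho_i > -\infty,
\]
where finiteness is guaranteed by $\rho \in P$, i.e.\ $\sum_{i \in \Z}\rho_i < \infty$. I do not expect a serious obstacle here; the one point demanding care is the index bookkeeping modulo $4$, namely verifying that for each $i \equiv 0,2$ both endpoints $k_i$ and $k_{i+1}$ of the block $I_i$ lie near the \emph{same} periodic configuration, so that the gap $|x_{k_{i+1}}-x_{k_i}|$ is genuinely majorized by $\rho_i+\rho_{i+1}$ rather than by a quantity of order $u^1-u^0$. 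Once this is checked, the summability of $\rho$ closes the argument immediately.
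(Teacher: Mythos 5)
Your proof is correct and follows essentially the same route as the paper: discard the nonnegative blocks $A_i$ with $i \equiv \pm 1$, bound each block with $i \equiv 0,2$ below by $-C|x_{k_{i+1}}-x_{k_i}|$ via the second estimate of Lemma \ref{lemm:finite_bdd}, and use the endpoint constraints in $\xkr$ together with $\sum_i \rho_i < \infty$ to conclude. Your explicit mod-$4$ bookkeeping of why both endpoints of each such block lie near the same $u^j$ is exactly the point the paper leaves implicit.
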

\begin{proof}
For $i \equiv 0, 2$, we see that  $A_i(x) = a_i(x)$.
By Lemma \ref{lemm:yu_lower} and $A_i(x) \ge 0$ for $i \equiv \pm 1$,
\begin{align*}
J(x) \ge -C \sum_{i \in \Z} \max \{\rho_i, \rho_{i+1}\} \ge -2C\sum_{i \in \Z} \rho_i > -\infty.
\end{align*}
By a similar argument, we obtain a constant $\gamma$ such that for any $n \in \N$,
\[
\sum_{|i| \le n} A_i(x) \ge \gamma,
\]
thus completing the proof.
\end{proof}

To ensure that $J$ has a minimizer in $\xkr$,
we present the following lemma.

\begin{lemm}
\label{lemm:welldefi}
The function $J$ is  well-defined on $\R \cup \{+\infty\}$, i.e., 
\[
\alpha:=\liminf_{n \to \infty}  \sum_{|i| \le n} A_i(x) = \limsup_{n \to \infty}  \sum_{|i| \le n} A_i(x)=:\beta.
\]
\end{lemm}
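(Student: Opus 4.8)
The plan is to reduce well-definedness to the absolute summability of the negative parts of the terms $A_i(x)$. Writing $A_i^{+}:=\max\{A_i(x),0\}$ and $A_i^{-}:=\max\{-A_i(x),0\}$, I would first observe that once $\sum_{i\in\Z}A_i^{-}<\infty$ is known, the symmetric partial sums decompose as $\sum_{|i|\le n}A_i(x)=\sum_{|i|\le n}A_i^{+}-\sum_{|i|\le n}A_i^{-}$. The first sum is nondecreasing in $n$, hence tends to a limit in $[0,+\infty]$, while the second is nondecreasing and bounded above by $\sum_{i\in\Z}A_i^{-}$, hence tends to a finite limit. Their difference therefore converges in $(-\infty,+\infty]$, with no $\infty-\infty$ ambiguity, which forces $\alpha=\beta$ and shows that $J$ takes values in $\R\cup\{+\infty\}$. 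So the entire statement hinges on bounding $\sum_{i}A_i^{-}$.

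To this end I would split the index set by residue modulo $4$. For the transition blocks $i\equiv\pm1$ we already have $A_i(x)\ge0$, so $A_i^{-}=0$ and these indices contribute nothing. It remains to control the dwelling blocks $i\equiv0,2$, for which $A_i(x)=\sum_{j\in I_i}a_j(x)$.

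The key geometric point is that both ends of a dwelling block approach the \emph{same} periodic configuration. Indeed, for $i\equiv0$ the definition of $\xkr$ forces $x\in Y^0(k_i,\rho_i)$ and, since $i+1\equiv1$, also $x\in Y^0(k_{i+1},\rho_{i+1})$, so that $|x_{k_i}-u^0|\le\rho_i$ and $|x_{k_{i+1}}-u^0|\le\rho_{i+1}$, whence $|x_{k_{i+1}}-x_{k_i}|\le\rho_i+\rho_{i+1}$. The same estimate holds for $i\equiv2$ with $u^1$ in place of $u^0$. Applying the second inequality of Lemma \ref{lemm:yu_lower} to the finite sub-configuration $(x_j)_{j=k_i}^{k_{i+1}}$ then gives
\[
A_i(x)=\sum_{j\in I_i}a_j(x)\ge -C\,|x_{k_{i+1}}-x_{k_i}|\ge -C(\rho_i+\rho_{i+1}),
\]
so that $A_i^{-}\le C(\rho_i+\rho_{i+1})$ on every dwelling block. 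Summing over $i$ and using $\rho\in P$ yields $\sum_{i\in\Z}A_i^{-}\le 2C\sum_{i\in\Z}\rho_i<\infty$, exactly as in the proof of Lemma \ref{lemm:lower}.

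I expect the only genuine obstacle to be this bookkeeping: one must check that each dwelling block has both endpoints near the \emph{same} $u^j$, so that their separation is governed by $\rho_i+\rho_{i+1}$ rather than by the full gap $u^1-u^0$. It is precisely this feature that makes the negative parts summable and rules out oscillation of the symmetric partial sums. With $\sum_{i}A_i^{-}<\infty$ established, the monotone-convergence argument of the first paragraph closes the proof.
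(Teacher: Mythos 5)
Your proof is correct, but it is organized quite differently from the paper's. The paper argues by contradiction: assuming $\liminf<\limsup$, it splits into the cases $\beta=+\infty$ and $\beta<+\infty$ and derives a contradiction from the uniform lower bound $\gamma$ on partial block sums (from Lemma \ref{lemm:lower}) together with the tail estimate $\sum_{|i|>n}\rho_i\to0$. You instead prove the stronger, structural fact that $\sum_{i}A_i^{-}\le 2C\sum_i\rho_i<\infty$, after which convergence of the symmetric partial sums in $(-\infty,+\infty]$ is immediate from monotone convergence of $\sum A_i^{+}$ and $\sum A_i^{-}$ separately. The key estimates are the same in both arguments --- nonnegativity of $A_i$ on transition blocks $i\equiv\pm1$, and the bound $\sum_{j\in I_i}a_j(x)\ge -C|x_{k_{i+1}}-x_{k_i}|\ge -C(\rho_i+\rho_{i+1})$ on dwelling blocks, which indeed hinges on the bookkeeping you flag (both endpoints of a block with $i\equiv0$ lie within $\rho_i+\rho_{i+1}$ of $u^0$, and likewise with $u^1$ for $i\equiv2$, by the definition of $\xkr$). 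What your route buys is a cleaner and slightly stronger conclusion: absolute summability of the negative parts gives well-definedness of $J$ independently of any particular exhaustion of $\Z$, not merely agreement of $\liminf$ and $\limsup$ along symmetric partial sums, and it avoids the case analysis (and the sign slips in the displayed inequalities) of the contradiction argument. The paper's route, on the other hand, stays closer to Yu's original Lemma 6.1 and reuses the constant $\gamma$ already produced in Lemma \ref{lemm:lower} rather than redoing the block estimate.
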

\begin{proof}
For the proof, we use a similar argument to Yu's proof of Lemma 6.1 and Proposition 2.9 in \cite{Yu2022}.
By contradiction, we assume $\alpha<\beta$.
First, we consider the case where $\beta = +\infty$.
For $\alpha< +\infty$, we take a constant $\tilde{\alpha}$ with $\tilde{\alpha} > \alpha+1-2\gamma $.
Then there are constants $n_0$ and $n_1$ such that $n_0 < n_1$ and:
\begin{align*}
\sum_{|i| \le n_0} A_i(x) \ge \tilde{\alpha} \ \text{and} \  \sum_{|i| \le n_1} A_i(x) \le \alpha+1.
\end{align*}
Then,
\[
2\gamma > \alpha+1-\tilde{\alpha}\ge
  \sum_{|i| \le n_1} A_i(x) - \sum_{|i| \le n_0} A_i(x) 
=\sum_{ i= -n_1}^{-n_0} A_i(x)  +  \sum_{i = n_0}^{n_1} A_i(x).
\]
Combining the first term and end terms implies:
\[
\sum_{ i= -n_1}^{-n_0} A_i(x) < \gamma \ \text{or} \ \sum_{ i= -n_1}^{-n_0} A_i(x)  <\gamma.
\]
This contradicts Lemma \ref{lemm:lower}.

Next, we assume $\beta<+\infty$.
Since $\alpha< \beta$, there are two sequences of positive integers $\{m_j \to \infty\}_{j \in \N}$ and $\{l_j \to \infty\}_{j \in \N}$
satisfying $m_j<m_{j+1}$, $l_j<l_{j+1}$ and $m_j+1 < l_j < m_{j+1} -1$ for all $j \in \Z_{>0}$, and:
\[
 \beta= \lim_{j \to \infty} \sum_{i \le |m_j|} A_i(x) >  \lim_{j \to \infty}  \sum_{i \le |l_j|} A_i(x) =\alpha.
\]
Then we can find $j \gg 0$ such that
\[
 \sum_{i \le |l_j|} A_i(x) - \sum_{i \le |m_j|} A_i(x) =  \sum_{i =-l_j}^{-m_j} A_i(x) + \sum_{i =m_j}^{l_j} A_i(x) < \frac{\alpha -\beta}{2}.
\]
Since $|l_j|$ and $|m_j|$ are finite for fixed $j$,
the above calculation does not depend on the order of the sums.
Thus, we obtain:
\begin{align*}
 \sum_{i =-l_j}^{-m_j} A_i(x) + \sum_{i =m_j}^{l_j} A_i(x)
&\ge   \sum_{i \in [-l_j, -m_j] \cap 2\Z} A_i(x) + \sum_{i \in [m_j, l_j] \cap 2\Z} A_i(x) \\
 &=  \sum_{i \in [-l_j, -m_j] \cap 2\Z} a_i(x) + \sum_{i \in [-l_j, -m_j] \cap 2\Z}a_i(x) 
\end{align*}
For sufficiently large $j$, we have
\[
\sum_{i \in [-l_j, -m_j] \cap 2\Z} a_i(x) \ge -C \sum_{i} |x_{-m_j}-x_{-l_j}| > \frac{\alpha-\beta}{4}
\]
and
\[
\sum_{i \in [-l_j, -m_j] \cap 2\Z} a_i(x) \ge -C \sum_{i} |x_{-m_j}-x_{-l_j}| > \frac{\alpha-\beta}{4}
\]
because  $\rho \in P$ implies
\[
\sum_{|i|>n} \rho_i \to 0 \ (n \to \infty).
\]
Therefore
\begin{align*}
 \sum_{i =-l_j}^{-m_j} A_i(x) + \sum_{i =m_j}^{l_j} A_i(x) > \frac{\alpha-\beta}{2},
 \end{align*}
which is a contradiction.
\end{proof}

\begin{prop}
\label{prop:min}
For all $k \in K$ and $\rho \in P$, there exists a minimizer of $J$ in $\xkr$.
\end{prop}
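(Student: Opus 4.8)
The plan is to apply the direct method of the calculus of variations. First I would record that the infimum $m := \inf_{x \in \xkr} J(x)$ is a finite real number: it is bounded below by Lemma~\ref{lemm:lower} (indeed $J(x) \ge -2C\sum_{i} \rho_i$ uniformly on $\xkr$) and bounded above by the test sequence constructed in Lemma~\ref{lemm:upper}, which simultaneously shows $\xkr \neq \emptyset$. I would then select a minimizing sequence $(x^n) \subset \xkr$ with $J(x^n) \to m$, and invoke the sequential compactness of $\xkr$ (the corollary to Lemma~\ref{lemm:cpt}) to extract a subsequence, still denoted $(x^n)$, converging to some $x^\ast \in \xkr$ in the sense of \eqref{t_metric}, that is, coordinatewise.

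The core of the argument is to establish lower semicontinuity of $J$ along this sequence, namely $J(x^\ast) \le \liminf_n J(x^n) = m$. Fix $N \in \N$. Each block $A_i$ is a \emph{finite} sum of terms $h(x_j,x_{j+1})$ minus a constant, so by the (local) Lipschitz continuity of $h$ it depends continuously on the finitely many coordinates it involves; hence the truncation $\sum_{|i| \le N} A_i$ is continuous for coordinatewise convergence, giving $\sum_{|i| \le N} A_i(x^n) \to \sum_{|i| \le N} A_i(x^\ast)$ as $n \to \infty$. To pass from the truncation to the full sum uniformly in $n$, I would control the tail using the sign structure of $J$: for $i \equiv \pm 1$ one has $A_i \ge 0$, while for $i \equiv 0,2$ Lemma~\ref{lemm:yu_lower} gives $A_i(x) \ge -C|x_{k_{i+1}} - x_{k_i}| \ge -C(\rho_i + \rho_{i+1})$, since both endpoints of the block then lie within $\rho_i$, respectively $\rho_{i+1}$, of the same $u^j$. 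Consequently $\sum_{|i| > N} A_i(x^n) \ge -\epsilon_N$ with $\epsilon_N := 2C\sum_{|i| > N} \rho_i$, a bound independent of $n$, and $\epsilon_N \to 0$ because $\rho \in P$.

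Putting these together, for every $N$ and every $n$,
\[
\sum_{|i| \le N} A_i(x^n) = J(x^n) - \sum_{|i| > N} A_i(x^n) \le J(x^n) + \epsilon_N.
\]
Letting $n \to \infty$ and using coordinatewise continuity of the truncation yields $\sum_{|i| \le N} A_i(x^\ast) \le m + \epsilon_N$, and then letting $N \to \infty$, together with the fact that $\lim_N \sum_{|i| \le N} A_i(x^\ast) = J(x^\ast)$ is well defined by Lemma~\ref{lemm:welldefi}, gives $J(x^\ast) \le m$. Since $J(x^\ast) \ge m$ by definition of the infimum, $x^\ast$ is the desired minimizer. The main obstacle I anticipate is precisely this uniform tail estimate: coordinatewise convergence alone gives no control over infinitely many blocks at once, so the argument must genuinely exploit the sign condition $A_i \ge 0$ for $i \equiv \pm 1$ together with the summability $\sum_i \rho_i < \infty$ in order to make the lower semicontinuity quantitative and uniform in $n$.
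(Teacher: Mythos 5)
Your proposal is correct and follows essentially the same route as the paper: the direct method on the sequentially compact set $\xkr$, continuity of the finite truncations $\sum_{|i|\le N}A_i$ under coordinatewise convergence, and a tail bound uniform in $n$ coming from $A_i\ge 0$ for $i\equiv\pm1$ together with the Lipschitz estimate $A_i(x)\ge -C(\rho_i+\rho_{i+1})$ for $i\equiv 0,2$ and the summability of $\rho$. The only (harmless) difference is that you obtain the finiteness of $J(x^\ast)$ and the control of its tail directly from the uniform bound $\sum_{|i|\le N}A_i(x^\ast)\le m+\epsilon_N$ combined with Lemma~\ref{lemm:welldefi}, whereas the paper verifies the second inequality of its display \eqref{eq:lower_conti} by a separate contradiction argument.
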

\begin{proof}
By Lemmas \ref{lemm:upper} and \ref{lemm:lower},
we can take a minimizing sequence $x=(x^n)_{n \in \N}$ of $J$ in $\xkr$.
Since $\xkr$ is  sequentially compact, there exists $\tilde{x} \in \xkr$ with $x_{n_k} \to \tilde{x} $.
It is enough to show that
for any $\epsilon>0$, there exists $j_0$ and $n_0 \in \N$
such that:
\begin{align}
\label{eq:lower_conti}
\sum_{|i|>j_0} A_i(x^n) >-\epsilon \ (\text{for all} \ n \ge n_0)
\text{\ and\ }
\sum_{|i|>j_0} A_i(\tilde{x}) <\epsilon,
\end{align}
because if the above inequalities hold, we obtain:
\begin{align*}
J(\tilde{x})
&=\sum_{|i| \le j_0} A_i(x)+ \sum_{|i| > i_0} A_i(x)\\
&\le \lim_{n \to \infty} \sum_{|i| \le j_0} A_i(x^n) + \epsilon
= \lim_{n \to \infty} (\sum_{i \in \Z} A_i(x^n) - \sum_{|i|>j_0} A_i(x^n))+ \epsilon \\
&\le  \lim_{n \to \infty} \sum_{i \in \Z} A_i(x^n) + 2\epsilon.
\end{align*}
Using an arbitrary value of $\epsilon$, we have $J(\tilde{x}) \le  \lim_{n \to \infty} \sum_{i \in \Z} A_i(x^n) $ and
$\tilde{x}$ is the infimum (or greatest lower bound) of $J$.

We now show the first inequality of \eqref{eq:lower_conti}.
Lemma \ref{lemm:finite_bdd} implies that for any $n \in \N$ and $j \in \N$:
\begin{align}
\label{eq:bdd_low}
\sum_{|i|>j} A_i(x^n) \ge -C \sum_{|i|>j} \max \{ \rho_{2i}, \rho_{2i+1}\} \ge -C  \sum_{|i|>j}  \rho_i.
\end{align}
Note that $A_i \ge 0$ for $i \equiv 1,2$.
Since $\sum_{i \in \Z}  \rho_i$ is finite, we have $\sum_{|i|>j}  \rho_i < \epsilon/C$ for sufficiently large $j$.
Hence, the first inequality holds.

To check the second inequality, it suffices to show that $\sum_{i \in Z} J_i(\tilde{x}) $  is finite.
If $\sum_{i \in Z} J_i(y) $ is infinite, then for any $M>0$, there is a $j_0$ such that:
\begin{align*}
M \le \sum_{|i| \le j_0} A_i(\tilde{x}) = \sum_{|i| \le j_0} A_i(\lim_{n \to \infty} x^n) = \lim_{n \to \infty} \sum_{|i| \le j_0} A_i(x^n),
\end{align*}
since a finite sum $\sum_{|i| \le j_0} A_i(x)$ is continuous.
On the other hand, for any $\delta>0$, there exists $n_0$ such that if $n \ge n_0$, then
\eqref{eq:bdd_low} gives:
\begin{align*}
\sum_{|i| \le j_0} A_i(x^n) 
&=  J(x^n) - \sum_{|i| > j_0} A_i(x^n)\\
&< \inf_{x \in \xkr} J(x) + \delta + C  \sum_{|i|>j_0}  \rho_i,
\end{align*}
so $\sum_{|i| \le j_0} A_i(x^n) $ is finite, which is a contradiction.

These results and the continuity of the finite sum of $A_i$ imply that, for any $\epsilon>0$,
\begin{align*}
J(\tilde{y})
&=\sum_{|i| \le i_0} A_i(y)+ \sum_{|i| > i_0} A_i(y)
\le \lim_{n \to \infty} \sum_{|i| \le i_0} A_i(y^n) + \epsilon
\le  \lim_{n \to \infty} \sum_{i \in \Z} A_i(y^n) + 2\epsilon.
\end{align*}
By using an arbitrary value of $\epsilon$, we have $J(\tilde{y}) \le  \lim_{n \to \infty} \sum_{i \in \Z} A_i(y^n) $.
\end{proof}


\subsection{Properties of a minimizer}
Let $x^\ast=(x^\ast_i)_{i \in Z}$ be a minimizer in Proposition \ref{prop:min}.
First, we show the properties of $A_i$ for $i \equiv 0,2$ when $\rho_i$'s are sufficiently small.

\begin{prop}
\label{prop:estimate_delta}
For any $\rho \in P$ and a positive sequence
$\delta = (\delta_i)_{i \in \Z}$,
if  $\delta$ satisfies
\[\rho_{2i} + \rho_{2i+1} +2 \max\{ \delta_{2i} ,\delta_{2i+1}\} <c_{\ast}/2C,\]
then
there exists a sequence $k=(k_i)_{i \in Z}$ that satisfies the following:
there is a sequence $l=(l_i)_{i \in \Z}$ satisfying
\begin{enumerate}
\item[$(l_1)$] $ l_{2i}, l_{2i+1} \in (k_{2i},k_{2i+1}) \cap \Z$ with $ l_{2i} < l_{2i+1} $ and
\item[$(l_2)$]  $|x^\ast_{l_i} - x^j|<\delta_i$,
where $j=0$ if $i \equiv 0,1$ and $j=1$ if $i \equiv -1,2$.
\end{enumerate}
\end{prop}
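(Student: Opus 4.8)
The plan is to exploit the fact that the minimizer $x^\ast$ decouples across the blocks $I_i$: since the interior values $(x^\ast_j)_{k_i<j<k_{i+1}}$ enter $J$ only through the single term $A_i$, the restriction of $x^\ast$ to a block $I_i$ with $i\equiv 0 \pmod 4$ minimizes $\sum_{j\in I_i} h(x_j,x_{j+1})$ among all configurations in $[u^0,u^1]$ with the two endpoints $x^\ast_{k_i},x^\ast_{k_{i+1}}$ held fixed. Writing $a=x^\ast_{k_i}-u^0$ and $b=x^\ast_{k_{i+1}}-u^0$, the constraints defining $\xkr$ give $0\le a\le \rho_i$ and $0\le b\le \rho_{i+1}$, so this restriction is a finite minimizer of the type $y^0(|I_i|,a,b)$ to which Lemmas \ref{lemm:one_delta} and \ref{lemm:finite_delta} apply. (Those proofs use only the minimizing property and a test-competitor bound, hence hold for every minimizer with the prescribed endpoints, not merely for a distinguished one.) The analogous statement holds on blocks with $i\equiv 2 \pmod 4$, with $u^0$ replaced by $u^1$. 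First I would record a uniform upper bound: taking the competitor that agrees with $x^\ast$ at the endpoints of $I_i$ and equals $u^0$ in between and using Lipschitz continuity gives $A_i(x^\ast)\le C(a+b)\le C(\rho_i+\rho_{i+1})$, which by the standing hypothesis $\rho_i+\rho_{i+1}+2\max\{\delta_i,\delta_{i+1}\}<c_{\ast}/2C$ is $<c_{\ast}/2$, independently of $|I_i|$ and of the particular minimizer.

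The decisive point is that the threshold length produced by Lemma \ref{lemm:finite_delta} depends only on $\delta$ and on upper bounds for the endpoint offsets. Inspecting the proof of Lemma \ref{lemm:one_delta}, the alternative ``stay $\delta$-away from both $u^0$ and $u^1$'' is excluded once $|I_i|\,\phi(\delta)-C|a-b|>c_{\ast}/2$, and the alternative ``reach the $\delta$-neighborhood of $u^1$'' is excluded by $a+b+2\delta<c_{\ast}/2C$. Since $a\le\rho_i$, $b\le\rho_{i+1}$ and $\delta:=\min\{\delta_i,\delta_{i+1}\}\le\max\{\delta_i,\delta_{i+1}\}$, both requirements are controlled by quantities depending only on $\rho$ and $\delta$, never on $x^\ast$. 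Hence there is a number $N$, computed solely from $\rho,\delta,c_{\ast},C,\phi$, such that whenever $|I_i|\ge N$ the conclusion of Lemma \ref{lemm:finite_delta} with $k=2$ holds for the restriction of any minimizer to $I_i$. I would therefore choose $k\in K$ so that every plateau block satisfies $k_{i+1}-k_i\ge N$ for $i\equiv 0,2 \pmod 4$ (the transition blocks being arbitrary); this choice is made before, and independently of, the minimizer, which is what removes the apparent circularity.

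Finally, for each even index $2i$ I would apply the two-times form of Lemma \ref{lemm:finite_delta} on the plateau block $(k_{2i},k_{2i+1})$ --- the $u^0$-version when $i$ is even (so $2i\equiv 0$, $2i+1\equiv 1$) and the $u^1$-version when $i$ is odd (so $2i\equiv 2$, $2i+1\equiv 3$) --- with target radius $\delta=\min\{\delta_{2i},\delta_{2i+1}\}$. This yields two indices, labelled $l_{2i}<l_{2i+1}$, at which $x^\ast$ lies within $\min\{\delta_{2i},\delta_{2i+1}\}$, hence within each of $\delta_{2i}$ and $\delta_{2i+1}$, of the relevant $u^j$, giving $(l_2)$; note that the residue bookkeeping guarantees the single block $(k_{2i},k_{2i+1})$ carries the same target $u^j$ demanded of both indices. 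To obtain the strict interiority $l_{2i},l_{2i+1}\in(k_{2i},k_{2i+1})$ of $(l_1)$, I would invoke the $[m_0,n-m_1]$ version of Lemma \ref{lemm:one_delta} with $m_0=m_1=1$ (enlarging $N$ accordingly) so the extracted indices avoid the endpoints. Assembling these pairs over all $i\in\Z$ produces the desired sequence $l=(l_i)$.

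I expect the main obstacle to be precisely the uniformity argument of the second paragraph: one must verify that the cutoff length $N$ can be fixed from the data $(\rho,\delta)$ alone, before the minimizer is known, for otherwise choosing $k$ in order to control $x^\ast$ would be circular. The remaining difficulties --- matching residues mod $4$, and upgrading the lemma's non-strict, possibly endpoint-valued indices to two strictly interior ones --- are routine once that point is secured.
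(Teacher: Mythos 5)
Your proposal is correct and follows essentially the same route as the paper, whose proof is simply the one-line assertion that the claim follows from $(h_1)$ and Lemmas \ref{lemm:one_delta} and \ref{lemm:finite_delta}; you supply exactly the details that assertion suppresses, namely that the restriction of $x^\ast$ to a plateau block is a constrained minimizer of type $y^0(|I_i|,a,b)$ with $a\le\rho_{2i}$, $b\le\rho_{2i+1}$, and that the threshold block length in those lemmas is uniform in $(a,b)$, so $k$ can be fixed before the minimizer is known. The remaining adjustments you flag (interior indices via $m_0=m_1=1$, shrinking the target radius to get the strict inequality in $(l_2)$) are minor and handled correctly.
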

\begin{proof}
It is immediately shown from $(h_1)$, Lemmas \ref{lemm:one_delta} and \ref{lemm:finite_delta}.
\end{proof}

To see that $x^\ast$ is an infinite transition orbit,
it suffices to show that $x^\ast$ is not on the boundary of $\xkr$.
We set  $\tilde{I}_0:=(u^0,u^1) \backslash I_0$ and  $\tilde{I}_1:=(u^0,u^1) \backslash I_1$.
We assume $\eqref{gap}$ (see Section \ref{sec:pre}), i.e., $\tilde{I}_i \neq \emptyset$ for $i=0,1$.

\begin{rem}
\label{rem:rho_small}
Notice that $\rho \in \tilde{I}_i$ can be chosen as arbitrarily small.
\end{rem}

We choose $\rho$ and $k$ in the following steps:
\begin{itemize}
\item[Step $1$] 
First we take $\rho \in P$
so that
\begin{itemize}
\item[(p1)] $u^0+\rho_i \in \tilde{I}_0$ for all $i \equiv 1,2$ and $u^1-\rho_i \in \tilde{I}_1$ for all $i \equiv -1,0$ and
\item[(p2)] For any $i \in \Z$, $\rho_{2i} + \rho_{2i+1} < c_{\ast}/2C$
\end{itemize}

\item[Step $2$] To determine $|k_{2i} - k_{2i+1} |$,  
we define a positive sequence $(e^i)_{i \in \Z}$ by:
\begin{align*}
e^i=
\begin{cases}
e_0(\rho_{2i+1},\rho_{2(i+1)}), & (i : even)\\
e_1(\rho_{2i+1},\rho_{2(i+1)}) & (i : odd).
\end{cases}
\end{align*}
Choose a positive sequence $(\delta_i)_{i \in \Z}$ so that:
\begin{itemize}
\item[(d)]  $2\max \{ \delta_{2i} , \delta_{2i+1}\}< c^\ast/2C - (\rho_{2i} + \rho_{2i+1}) $
\end{itemize}
Then, through Proposition \ref{prop:estimate_delta} for  $(\rho_i)$ and $(\delta_i)$, we can obtain $|k_{2i}-k_{2i+1}|$ and $l_i$ for each $i \in \Z$ such that $(l_1)$ and $(l_2)$ in Proposition \ref{prop:estimate_delta} hold.
 
\item[Step $3$] 
For $\delta$ in Step $2$,  choose $(\epsilon_i)_{i \in \Z}$ so that:
\begin{itemize}
\item[(e1)]  $\epsilon_i +2C( \delta_{2i +1}  + \delta_{2(i+1)})< e^i/2 $ and
\item[(e2)] $\epsilon_i /2C < \min \{\delta_{2i+1}, \delta_{2(i+1)}\}$.
\end{itemize}
For each $\epsilon_i$, 
Lemma \ref{lemm:finite_het} gives $n_i$ and $x^i \in  \calm^i(u^0,u^1)$ such that
$\sum_{i=0}^{n-1} a_i(x) \le  c_0 + \epsilon $ for all $n \ge n_i$.
For $(n_i)_{i \in \Z}$ and $(x^i)_{i \in \Z}$, we choose  $|k_{2i+1}-k_{2(i+1)}|$ satisfying
\begin{itemize}
  \item[(k1)] $|k_{2i+1}-k_{2(i+1)}| \ge n_i$ and
  \item[(k2)] $x^i \in  \calm_0 \cap Y^0(0, \rho_{2i}) \cap Y^1(|k_{2i-1}-k_{2i}|, \rho_{2i+1})$ when $i$ is even and  $x^i \in \calm_1 \cap Y^1(0, \rho_{2i}) \cap Y^0(|k_{2i-1}-k_{2i}|, \rho_{2i+1}) $ when $i$ is odd.
\end{itemize}
\item[Step $4$] By the above steps and $k_0=0$, the sequence $k=(k_i)_{i \in \Z}$ is determined.
\end{itemize}

Now we are ready to state our main theorem when the rotation number $\alpha$ is zero.
\begin{theo}
\label{theo:inf_transition}
Suppose $\eqref{gap}$. Then,
for $\epsilon >0$, there exist two sequences $k \in K$ and $\rho \in P(\sigma)$ with $\rho_i < \epsilon$ for all $i \in Z$
such that a stationary configuration $x^\ast$ in $\xkr$ with rotation number $\alpha=0$.
\end{theo}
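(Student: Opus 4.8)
The plan is to take for $x^\ast$ the minimizer of $J$ on $\xkr$ furnished by Proposition \ref{prop:min}, where $k \in K$ and $\rho \in P$ are the sequences produced by Steps $1$--$4$ (so in particular $\rho_i < \epsilon$ by Remark \ref{rem:rho_small}), and then to prove that this $x^\ast$ is a genuine stationary configuration of $h$. The rotation number is the easy part: since $x^\ast \in X$ we have $u^0 \le x^\ast_i \le u^1$ for every $i$, whence $x^\ast_i/i \to 0$ as $|i| \to \infty$, so $\alpha^+(x^\ast) = \alpha^-(x^\ast) = 0$ and $\alpha(x^\ast) = 0$. The real content is to show that $x^\ast$ is an \emph{interior}, i.e.\ unconstrained, local minimizer; once that is established, Mather's result quoted in Section \ref{section:intro} --- that every locally minimal configuration satisfies $\partial_2 h(x_{i-1},x_i) + \partial_1 h(x_i,x_{i+1}) = 0$ --- applies at each index and makes $x^\ast$ stationary.

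To organize this I would first exploit the block structure of $J$. Since each $A_i(x)$ depends only on the coordinates $x_j$ with $j \in I_i \cup \{k_{i+1}\}$, the blocks interact solely through the shared marker values $x^\ast_{k_i}$; hence, with those marker values frozen, the restriction of $x^\ast$ to each block $I_i$ must minimize $\sum_{j \in I_i} h(x_j,x_{j+1})$ among configurations valued in $[u^0,u^1]$ with the prescribed endpoints. By Lemma \ref{lemm:estimate_periodic2} each such block is then a minimal configuration, and as soon as one of its endpoints lies strictly inside $(u^0,u^1)$, all of its interior coordinates avoid $\{u^0,u^1\}$. Because every transition block ($i \equiv \pm 1$) carries the configuration from within $\rho_i<(u^1-u^0)/2$ of $u^0$ to within of $u^1$, it must cross the midpoint, so at least one coordinate of $x^\ast$ is strictly interior; propagating Lemma \ref{lemm:estimate_periodic2} across consecutive blocks then shows that no coordinate of $x^\ast$ equals $u^0$ or $u^1$, while the marker coordinates are pinned away from $\{u^0,u^1\}$ by Lemma \ref{lemm:estimate_periodic1}. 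This removes all the outer constraints $x_j \in \{u^0,u^1\}$.

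It remains to show that no inner constraint $|x^\ast_{k_i} - u^j| \le \rho_i$ is active, i.e.\ that $x^\ast_{k_i} \neq u^0 + \rho_i$ (resp.\ $u^1 - \rho_i$); this is the crux, and I would argue it by contradiction using the gap hypothesis \eqref{gap} together with the calibration of Steps $1$--$4$. By (p1) the forbidden marker value $u^0 + \rho_i$ lies in the gap $\tilde{I}_0$, so if it were attained, the transition block adjacent to $k_i$ would be a heteroclinic segment forced through a gap value; by Lemma \ref{lemm:gap_estimate} (with $e^i$ the constant $e_0$ or $e_1$ fixed in Step $2$) its renormalized cost would exceed the minimal heteroclinic value $c_0$ (resp.\ $c_1$) by at least $e^i$, the block having been made long enough in Step $3$, condition (k1), for Lemma \ref{lemm:finite_het} to identify its finite cost with the infinite heteroclinic value up to an error controlled by $\epsilon_i$. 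On the other hand, the test configuration constructed in Lemma \ref{lemm:upper} from the near-optimal heteroclinics $x^i$ of Step $3$ realizes each transition block within $\epsilon_i$ of that minimal cost, and by (e1) one has $\epsilon_i + 2C(\delta_{2i+1} + \delta_{2(i+1)}) < e^i/2 < e^i$. Comparing $J(x^\ast)$ against this test configuration block by block then yields a strict decrease, contradicting the minimality of $x^\ast$; hence every marker is strictly interior to its constraint. With both families of constraints inactive, $x^\ast$ is an interior local minimizer, and the conclusion follows.

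I expect the third paragraph to be the main obstacle, namely transferring the infinite-configuration estimate of Lemma \ref{lemm:gap_estimate} to a single finite transition block and reconciling the bookkeeping of the renormalizing constants $c$ and $c_i^\pm$ with the calibrated quantities $e^i$, $\delta_i$, $\epsilon_i$ of Steps $1$--$3$. By contrast, the outer-boundary argument of the second paragraph and the rotation-number computation are comparatively routine applications of the lemmas already in hand.
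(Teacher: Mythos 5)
Your proposal follows essentially the same route as the paper's proof: the same calibrated choice of $k$ and $\rho$ via Steps 1--4, Lemmas \ref{lemm:estimate_periodic1} and \ref{lemm:estimate_periodic2} to keep the coordinates of $x^\ast$ off $\{u^0,u^1\}$, and a contradiction argument for the inner constraints that pits the lower bound of Lemma \ref{lemm:gap_estimate} against a test configuration spliced from the near-optimal heteroclinics of Step 3, with the gluing error absorbed by condition (e1). The step you flag as the main obstacle is handled in the paper exactly as you anticipate: the offending block is padded with $u^0$ and $u^1$ outside $[k_1-l_1,\,k_2+l_2]$ to produce an element of $X^0$ to which Lemma \ref{lemm:gap_estimate} applies, at a Lipschitz cost of $C(\delta_1+\delta_2)$.
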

\begin{proof}
Fix $\epsilon >0$ arbitrarily.
Along with the above steps,
we can take $k=(k_i)$ and $\rho=(\rho_i) \in P$ so that  $\rho_i < \epsilon$ for all $i \in \Z$ by Remark \ref{rem:rho_small}.
Lemmas \ref{lemm:estimate_periodic1} and \ref{lemm:estimate_periodic2} implies that $x^\ast_i \notin \{u^0,u^1\}$ for all $i \in \Z$.
For a contradiction argument, we assume $x^\ast_{k_1} = u^0 + \rho_1$.

Hereafter, $x_i^\ast$ will be written simply as $x_i$ unless there is potential for confusion.
Let $y$ be:
\begin{align*}
y_i=
\begin{cases}
     x_i & i \in [k_1-l_1,k_2+l_2 ] \cap \Z \\
     u^1 & i > k_2+l_2\\
      u^0& i < k_1-l_1
\end{cases}.
\end{align*}
Since $y \in X^0$, Lemma \ref{lemm:gap_estimate} and $(h_1)$ yield:
\begin{align*}
c_0+e^0
&\le I(y)\\
& = \sum_{ i \in [k_1-l_1,k_2+l_2-1 ] \cap \Z} a_i(x) + \sum_ {i < k_1-l_1} a_i(x) +  \sum_{i > k_2+l_2} a_i(x ) \\
&\le \sum_{ i \in [k_1-l_1,k_2+l_2-1 ] \cap \Z} a_i(x)  + C(\delta_1 + \delta_2)
\end{align*}
By the above remark and $a_i=A_i$ for $i \equiv 0,2$,
\begin{align*}
&\sum_{i=k_1-l_1}^{k_1-1} (h(x_i,x_{i+1}) -c) + A_1(x) + \sum_{i =k_2}^{k_2+l_2-1} (h(x_i,x_{i+1})-c)\\
&\quad> c_0 + e^0 -C({\delta}_1 + \delta_2) + (|I_1|c-c^+_{1}).
\end{align*}
On the other hand,  $(h_1)$ implies that $x^0 \in \calm^0$ in Step $3$ satisfies:
\begin{align*}
\sum_{i=k_1-l_1}^{k_1-1} (h(x^0_i,x^0_{i+1}) -c) + A_1(x^0) + \sum_{i =k_2}^{k_2+l_2-1} (h(x^0_i,x^0_{i+1})-c)\
<c_0 +\epsilon_0 + (|I_1|c-c^+_{1})
\end{align*}
We define a test sequence $z=(z_i)_{i \in \Z}$ by:
\begin{align*}
z_i=
\begin{cases}
     x_{i-k_1}^0 & i \in [k_1-l_1,k_2+l_2 ] \cap \Z\\
     x_i & \text{otherwise}
\end{cases}.
\end{align*}
By Lipschitz continuity, the monotonicity of $x^0$ and (e2), we find:
\begin{align*}
|h(x_{k_1+l_1-1},x^0_{k_1-l_1}) - h(x_{k_1+l_1-1},x_{k_1+l_1}) | \le C|x^0_{k_1-l_1} - x_{k_1-l_1}| &< C\delta_1 \ and\\
|h(x^0_{k_2+l_2},x_{k_2+l_2+1}) - h(x_{k_2+l_2},x_{k_2+l_2+1}) | \le C|x^0_{k_2+l_2} - x_{k_2+l_2}| &< C\delta_2.
\end{align*}
Hence
\begin{align*}
J(z) -J(x^\ast)
&<c_0 +\epsilon_0 + C({\delta}_1 + \delta_2) -( c_0 + e^0 -C({\delta}_1 + \delta_2) )\\
&=
2(\delta_1 + \delta_2) + \epsilon_0-   e_0  < -\frac{e^0}{2} <0,
\end{align*}
which is a contradiction.
The same proof works for the remaining cases.
For example, if  $x^\ast_{k_i} = u^0 + \rho_i$ or $x^\ast_{k_i} = u^1 - \rho_i$ for all $i \in \Z$, 
a similar argument yields:
\begin{align*}
J(\hat{z}) -J(x^\ast)
&< \sum_{i \in \Z} 2(\delta_{2i+1} + \delta_{2(i+1)}) + \epsilon_i -   e_i  < - \sum_{i \in \Z} \frac{e^i}{2} <0,
\end{align*}
where $ \hat{z} = (\hat{z}_i)_{i \in \Z} $ is given by:
\begin{align*}
\hat{z}_i=
\begin{cases}
     x_{i-k_{2j+1}}^j & i \in [k_{2j+1}-l_{2j+1},k_{2(j+1)}+l_{2(j+1)} ] \cap \Z \text{\ and \ } j:even\\
     x_{i-k_{2j+1}}^j & i \in [k_{2j+1}-l_{2j+1},k_{2(j+1)}+l_{2(j+1)} ] \cap \Z \text{\ and \ } j:odd\\
     x_i & \text{otherwise}
\end{cases}.
\end{align*}
This completes the proof.
\end{proof}
Moreover, we immediately obtain the following corollary.
\begin{co}
For any $\alpha \in \Q$, if $\eqref{hetero_gap}$ is satisfied,
then there exists a stationary configuration $x^\ast$ of $J$ with infinite transitions and rotation number $\alpha$.
\end{co}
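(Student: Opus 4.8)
The plan is to reduce the general rational case $\alpha=p/q$ to the case $\alpha=0$ already settled in Theorem~\ref{theo:inf_transition}, using the conjunction machinery recalled above. First I would replace the generating function $h$ by $H(\xi,\xi')=h^{*q}(\xi,\xi'+p)$ and the neighboring pair $(x^-,x^+)$ of $(q,p)$-periodic minimal configurations by the induced neighboring pair $(x_0^-,x_0^+)$ of $\calm_0^\per$ for $H$. The point is that $H$ inherits the structural hypotheses $(h_1)$--$(h_6)$ from $h$ (this is exactly what makes the reduction in Section~5 of \cite{Yu2022} work), so the whole apparatus of Section~\ref{sec:pf}---the renormalized action $J$, its lower bound, well-definedness, and the existence of a minimizer in $\xkr$---applies verbatim to $H$ at rotation number $0$.

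Second, I would check that the gap hypothesis transfers: the condition \eqref{hetero_gap} for $(x^-,x^+)$ at rotation number $\alpha$ is precisely the statement that the heteroclinic end-point sets of the reduced problem omit a point of $(x_0^-,x_0^+)$, i.e.\ it is exactly \eqref{gap} for $H$. Granting this, Theorem~\ref{theo:inf_transition} applied to $H$ produces sequences $k\in K$, $\rho\in P$ with $\rho_i$ arbitrarily small and a stationary configuration $y^\ast\in \xkr$ of the $H$-action $J$ which is an infinite transition orbit passing alternately through neighborhoods of $x_0^-$ and $x_0^+$, with rotation number $0$.

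Third, I would lift $y^\ast$ to $x^\ast\in X_\alpha(x^-,x^+)$ by the recipe of Definition~5.5 of \cite{Yu2022}: set $x^\ast_{iq}=y^\ast_i+ip$ and fill in each block $(x^\ast_j)_{j=iq}^{(i+1)q}$ as the minimal configuration of $h$ realizing $H(y^\ast_i,y^\ast_{i+1})$. Since $y^\ast$ is stationary for $H$, Proposition~\ref{prop:alpha_configuration} guarantees that $x^\ast$ is a stationary configuration of $h$. The rotation number of $x^\ast$ is $p/q=\alpha$, which is immediate from $x^\ast_{iq}=y^\ast_i+ip$ together with the uniform boundedness of each length-$q$ block. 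Finally, the infinite-transition structure of $y^\ast$ carries over to $x^\ast$: when $y^\ast_i$ lies near $x_0^-$ (resp.\ $x_0^+$), minimality of the intervening block keeps the whole block near $x^-$ (resp.\ $x^+$), so $x^\ast$ still switches between neighborhoods of $x^-$ and $x^+$ infinitely often.

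The main obstacle I expect is the bookkeeping in the two verifications packaged into the reduction: that $H$ really satisfies $(h_1)$--$(h_6)$ and, above all, that \eqref{hetero_gap} is equivalent to \eqref{gap} for $H$. Both are essentially contained in Section~5 of \cite{Yu2022}, but the equivalence of the gap conditions requires identifying the heteroclinic sets for $H$ with the projected sets $I^\pm_\alpha(x^-,x^+)$, which rests on the fact that the lift of a minimal heteroclinic of $H$ is a minimal heteroclinic of $h$ and conversely. The accompanying claim that closeness of $y^\ast_i$ to $x_0^\pm$ propagates to the whole lifted block is the remaining quantitative point, and it follows from the compactness of minimal blocks together with the ordering properties coming from $(h_3)$--$(h_5)$.
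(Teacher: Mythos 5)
Your proposal follows essentially the same route as the paper: the paper's proof is a one-line reduction citing exactly Proposition \ref{prop:alpha_configuration} (the conjunction/lifting machinery from Section 5 of \cite{Yu2022}) together with Theorem \ref{theo:inf_transition} applied at rotation number $0$. Your write-up simply makes explicit the verifications (transfer of $(h_1)$--$(h_6)$ and of the gap condition to $H$, and propagation of the transition structure through the lifted blocks) that the paper leaves implicit in that citation.
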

\begin{proof}
This follows from Proposition \ref{prop:alpha_configuration} and Theorem \ref{theo:inf_transition}.
\end{proof}

\section{The Frenkel-Kontorova model}
\label{sec:example_h}
In this section, we introduce the Frenkel-Kontorova model.
In the previous section,  we cannot generally show:
\begin{align}
\label{per_min_ineq}
h(x,y) -c \ge 0.
\end{align}
Therefore the proof of Proposition \ref{prop:finite} is somewhat technical.
However, \eqref{per_min_ineq} holds if $h$ satisfies:
\begin{align}
\label{rev}
h(x,y) = h(y,x).
\end{align}
Using $\eqref{rev}$, we can easily show the following lemma, which implies $h(x,y) -c \ge 0$.
\begin{lemm}
If a continuous function $h \colon \R^2 \to \R$ satisfies ($h_1$), ($h_3$) and $\eqref{rev}$,
then all minimizers are $(1,0)$-periodic.
\end{lemm}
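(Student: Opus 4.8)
The plan is to reduce the entire statement to a single pointwise inequality, namely $h(\xi,\eta)\ge c$ together with its equality case, which I claim follows immediately from $(h_3)$ and the reversibility $\eqref{rev}$. First I would fix $\xi\neq\eta$ and assume without loss of generality that $\xi<\eta$. Applying $(h_3)$ with $\underline{\xi}=\underline{\eta}=\xi$ and $\bar{\xi}=\bar{\eta}=\eta$ gives $h(\xi,\xi)+h(\eta,\eta)<h(\xi,\eta)+h(\eta,\xi)$, and $\eqref{rev}$ collapses the right-hand side to $2h(\xi,\eta)$. Since $c=\min_{z}h(z,z)$ bounds both $h(\xi,\xi)$ and $h(\eta,\eta)$ from below, this yields the strict inequality $h(\xi,\eta)>\tfrac{1}{2}(h(\xi,\xi)+h(\eta,\eta))\ge c$ for every $\xi\neq\eta$, while for $\xi=\eta$ one has $h(\xi,\xi)\ge c$ by the very definition of $c$. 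In particular $a_i(x)=h(x_i,x_{i+1})-c\ge 0$ for every configuration $x$, and $a_i(x)=0$ holds \emph{precisely} when $x_i=x_{i+1}$ and $h(x_i,x_i)=c$.

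Next I would observe that the constant configuration $x_i\equiv u^{\ast}$, where $u^{\ast}$ minimizes $z\mapsto h(z,z)$, has $I(x)=\sum_i a_i(x)=0$, so the infimum of the action over the admissible class is exactly $0$. Consequently any minimizer $x$ satisfies $\sum_i a_i(x)=0$; since every summand is nonnegative by the previous step, this forces $a_i(x)=0$ for all $i$, hence $x_i=x_{i+1}$ for every $i$ together with $h(x_i,x_i)=c$. Thus a minimizer is the constant configuration sitting at a minimum of $z\mapsto h(z,z)$, which is exactly the assertion that it is $(1,0)$-periodic in the sense of Definition \ref{def:periodic} (that is, $x_{i+1}=x_i+0$).

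The only genuine subtlety is the \emph{equality analysis}: it is the strictness in $(h_3)$ that upgrades $h(\xi,\eta)\ge c$ to a strict inequality off the diagonal, and this strictness is precisely what guarantees that a minimizer cannot move between consecutive sites without paying a strictly positive cost. I would therefore keep careful track of when $a_i=0$ can hold rather than merely establishing $a_i\ge 0$. I expect no obstacle beyond this bookkeeping; in particular, the inequality $h(x,y)-c\ge 0$ advertised in the surrounding remark is not a separate fact but exactly the by-product produced in the first paragraph, so the lemma and that inequality are two readings of one short computation.
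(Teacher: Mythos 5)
Your proof is correct and rests on exactly the same computation as the paper's: applying $(h_3)$ with the degenerate corners $\underline{\xi}=\underline{\eta}=\xi$, $\bar{\xi}=\bar{\eta}=\eta$ and using $h(\xi,\eta)=h(\eta,\xi)$ to get $2h(\xi,\eta)>h(\xi,\xi)+h(\eta,\eta)\ge 2c$; the paper merely phrases this as a contradiction rather than a direct inequality. Your second paragraph, passing from the pointwise inequality to minimizers of the action $I$ over configurations, is a harmless elaboration beyond what the paper's one-line proof records, and the equality analysis you flag is indeed the only content of the lemma.
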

\begin{proof}
From $(h_1)$, we can choice $x^\ast$ satisfying $h(x^\ast,x^\ast)=\min_{x \in \R} h(x,x)$.
By contradiction, there is $(x,y) \in \R^2$ such that $x \neq y$ and $h(x,y) < h(x^\ast,x^\ast)$. Then
\[
h(x,y) + h(y,x) < h(x^\ast,x^\ast) + h(x^\ast,x^\ast) \le  h(x,x) + h(y,y),
\]
but it contradicts $(h_3)$.
\end{proof}
We will give an example satisfying \eqref{rev}.
Consider
\begin{align}
\label{fkmodel}
h(x,y)=\frac{1}{2} \left\{ C(x-y)^2 + V(x) + V(y) \right\},
\end{align}
where $C$ is a positive constant and $V(x)=V(x+1)$ for all $x \in \R $.
It is called the Frenkel-Kontorova model \cite{AubrDaer1983, FrenKont1939}.
Since  $\partial h_1h_2 \le C < 0$,
Remark \ref{rem:delta_bangert} implies that \eqref{fkmodel} satisfies $(h_1)$-$(h_5)$.

\begin{rem}
If $h$ is of class $C^1$, we do not require $(h_6)$.
\end{rem}
If $h$ satisfies \eqref{rev},
the obtained orbits in Theorem \ref{theo:inf_transition} are monotone in the following sense:
\begin{prop}
For any $x^\ast$ in Theorem \ref{theo:inf_transition},
$(x_i^\ast)_{i={k_{2j+1}}}^{2(j+1)}$ is strictly  monotone increasing when $j$ is odd, and
it strictly monotone decreasing when $j$ is even.
\end{prop}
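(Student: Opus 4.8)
The plan is to localize the claim to a single transition block and then prove monotonicity of a fixed-endpoint minimizer, using the symmetry \eqref{rev} to make the comparison argument go through. First I would identify each segment $(x^\ast_i)_{i=k_{2j+1}}^{k_{2(j+1)}}$ with the restriction of $x^\ast$ to a transition interval $I_i$ with $i$ odd; by construction one endpoint is pinned to a $\rho$-neighborhood of $u^0$ and the other to a $\rho$-neighborhood of $u^1$. Theorem \ref{theo:inf_transition} gives $x^\ast_i\notin\{u^0,u^1\}$ for all $i$, so the interior of the block lies in the open interval $(u^0,u^1)$, and Lemma \ref{lemm:estimate_periodic2} shows that the restriction of $x^\ast$ to this finite block is a minimal configuration with its own fixed endpoints. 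Hence the proposition reduces to the statement that a minimal configuration contained in $[u^0,u^1]$, whose endpoints approach $u^1$ and $u^0$ respectively, is strictly monotone; the direction of monotonicity is then dictated by which periodic orbit each endpoint is pinned to, which is exactly what produces the alternation with the parity of $j$.

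For the weak monotonicity I would reuse the crossing (exchange) argument that already established the strict monotonicity of the elements of $\calm^0$ and $\calm^1$ in Section \ref{sec:pre}. If the block folded, one compares it with a monotone minimizer $g$ sharing the same pair of endpoints and forms the pointwise maximum $w$ and minimum $z$; these are again admissible competitors with the same endpoints, and $(h_3)$ (equivalently the stronger $(h_5)$) yields $h(w)+h(z)<h(x)+h(g)$ at any genuine crossing, contradicting minimality of the block. The symmetry \eqref{rev} enters twice here: it forces $h(x,y)-c\ge 0$ through the lemma of this section, so that the block behaves like a genuine heteroclinic minimizer asymptotic to $u^0$ and $u^1$, and it makes the reversal of a configuration action-preserving, which is what guarantees that monotone minimizers exist for both the ascending and the descending endpoint data against which the block is compared.

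Finally I would upgrade to strict monotonicity. For the Frenkel--Kontorova form \eqref{fkmodel} the stationarity equation reads $C(x_{i+1}-2x_i+x_{i-1})=V'(x_i)$, and a flat step $x_i=x_{i+1}$ inside an otherwise weakly monotone block that genuinely changes level across its endpoints can be removed by a perturbation that strictly lowers the action, because the positive-definite term $C(x-y)^2$ (equivalently the strict convexity of $(h_6)$) penalizes equal consecutive values once a neighbour is strictly smaller. Ruling out all such flat steps gives strict monotonicity on every block. I expect the main obstacle to be precisely the weak-monotonicity step: because $x^\ast$ is only a local (min--max type) critical point and is \emph{not} globally minimal, the shift/crossing comparison cannot be applied to $x^\ast$ as a whole but only to the individual transition block, and the care needed to keep the exchanged competitors admissible on that block---matching the pinned endpoints and remaining in $[u^0,u^1]$---is exactly where the symmetry \eqref{rev} is indispensable.
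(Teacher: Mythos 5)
Your reduction to a single transition block, viewed as a fixed-endpoint minimal segment in $[u^0,u^1]$, is sound and matches the setup the paper works in. The gap is in your weak-monotonicity step. You compare the block with ``a monotone minimizer $g$ sharing the same pair of endpoints,'' but such a $g$ is never produced anywhere in the paper, and producing it is essentially the content of the proposition itself, so the argument is circular as stated. Moreover, even if $g$ were available, the exchange argument with $w=\max(x,g)$, $z=\min(x,g)$ only shows that $x$ and $g$ do not cross, i.e.\ one lies on one side of the other; a fold $x_m>x_{m+1}$ can sit entirely below (or above) a monotone competitor with the same endpoints without ever crossing it, so no contradiction is reached. (One could try to upgrade this to uniqueness of the fixed-endpoint minimizer via the strict exchange inequality at the two shared endpoints, but you do not do this, and it still presupposes $g$.) Your final strictness step is also off target: the proposition is stated for any $h$ satisfying \eqref{rev}, not only for the Frenkel--Kontorova form, so the explicit stationarity equation $C(x_{i+1}-2x_i+x_{i-1})=V'(x_i)$ and the appeal to $(h_6)$ are not available in the generality required.

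The paper's mechanism is different and avoids the need for an external monotone competitor: it compares the block with its \emph{own reversal}. If $x_m>x_{m+1}$ while the block overall ascends, then $x_{m-s}<x_{m+s+1}$ for some $s$, so the block and its reflection about the midpoint cross. One then forms two hybrids: $y^1$, which replaces the first half by a minimal segment joining $x_0$ to $x_{m+1}$, and $y^2$, which replaces the second half by a minimal segment joining $x_m$ to $x_{2m+1}$. Using \eqref{rev} to identify the reversed second half of $x$ with a competitor for the first-half problem (and vice versa), and applying $(h_3)$ at the fold to get the strict inequality $2h(x_m,x_{m+1})=h(x_m,x_{m+1})+h(x_{m+1},x_m)>h(x_m,x_m)+h(x_{m+1},x_{m+1})$, one obtains
\[
\sum_i h(y^1_i,y^1_{i+1})+\sum_i h(y^2_i,y^2_{i+1})<2\sum_i h(x_i,x_{i+1}),
\]
so at least one hybrid strictly beats the block, contradicting its minimality; strictness comes for free from $(h_3)$, with no separate ``remove flat steps'' stage. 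You should replace your comparison-with-$g$ step by this reflection argument (or else supply an independent, non-circular construction of a monotone fixed-endpoint minimizer together with a uniqueness argument).
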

\begin{proof}
We only consider the case where  $j$ is odd.
By a contradiction argument,
suppose that $x_m > x_{m+1}$ for some $m$.
By the definition of $\xkr$,
 $x_{(m-s)} < x_{m+s+1}$ for some $s$.
Now we assume $s=m$ for simplicity.
 Set $y^1=(y_i^1)_{i=0}^{2m+1}$ and $y^2=(y_i^2)_{i=0}^{2m+1}$ by:
 \begin{align*}
 y_i^1
=
\begin{cases}
      z_i^1& (i=0, \cdots, m) \\
      x_{i}& (i=m+1, \cdots, 2m+1)
\end{cases}
\text{\ , and \ }
 y_i^2
 =
\begin{cases}
      x_{i}& (i=0, \cdots, m)\\
      z_i^2 & (i=m+1, \cdots, 2m+1)
\end{cases},
 \end{align*}
 where $z^1=(z^1)_{i=0}^{m}$ is a finite minimal configuration in $\xkr$ with $z^1_0=x_0$ and $z^1_m=x_{m+1}$
 and $z^2=(z^2_i)_{i=m+1}^{2m+1}$ with $z^2_{m+1}=x_m$ and $z^2_{2m+1}=x_{2m+1}$ is given similarly.
Applying $\eqref{rev}$ and $(h_3)$, we have:
\begin{align*}
2h(x_{m},x_{m+1}) =h(x_{m},x_{m+1}) + h(x_{m+1},x_{m}) > h(x_{m},x_{m}) + h(x_{m+1},x_{m+1}),
\end{align*}
and:
\begin{align*}
&2 \sum_{i=0}^{2m} h(x_i,x_{i+1}) - \sum_{i=0}^{2m} h(y^1_{i},y^1_{i+1}) -  \sum_{i=0}^{2m} h(y^2_{i},y^2_{i+1})\\
&> \sum_{i=0}^{m-1} h(x_i,x_{i+1}) +   \sum_{i=m+1}^{2m} h(x_i,x_{i+1}) - \sum_{i=0}^{m-1} h(y^1_{i},y^1_{i+1}) -  \sum_{i=m+1}^{2m} h(y^2_{i},y^2_{i+1})\\
&= \sum_{i=0}^{m-1} h(x_i,x_{i+1}) +   \sum_{i=0}^{m-1} h(x_{2m+1-i},x_{2m+1-{(i+1)}})\\
&\quad\quad\qquad\qquad\qquad- \sum_{i=0}^{m-1} h(y^1_{i},y^1_{i+1}) -  \sum_{i=0}^{m-1} h(y^2_{2m+1-i},y^2_{2m+1-{(i+1)}})\\
&>0.
\end{align*}
Hence, it holds that  $\sum_{i=0}^{2m} h(x_i,x_{i+1}) >  \sum_{i=0}^{2m} h(y^j_{i},y^j_{i+1})$ for $i=1$ or $2$.
The same reasoning applies to the remaining cases.
This completes the proof.
\end{proof}

\if0
\subsection{Billiard maps}
Next, we consider a billiard map that does not satisfy the twist condition.
Let $f \colon \R \to \R$ be a  positive smooth function satisfying $f(x)=f(x+1)$ for all $x \in \R$.
Set an area $D=D(f)$ by
\[
D = \{ (x,y) \in \R^2 \mid -f(x) \le y \le f(x)\}.
\]
Let $s$ be an arc-length parameter, i.e., $s$ is given by
\[
s=\int_{0}^{x} \sqrt{1 + f'(\tau)^2} d \tau =:g(x)
\]
and $x$ is represented by $x=g^{-1}(s)$. Moreover, we set
\[
\tilde{f}(s) = f(g^{-1}(s)).
\]
For the above settings, we can see a variational structure of billiard maps.
Consider
\[
h(s,s') = d_{\mathrm{E}}(a_{+}(s),a_{-}(s')),
\]
where 
$a_{\pm}(s):=(g^-(s), \pm \tilde{f}(s))$
and
$d_{\mathrm{E}}$ is Euclidean metric on $\R^2$.
We check that $h$ satisfies $(h_1)$-$(h_5)$.
\fi

\section{Further discussion}
\label{sec:general}
We first see that Theorem \ref{theo:inf_transition}  implies the existence of uncountable many infinite transition orbits.
We take $k \in K$ and $\rho \in P$ in Theorem \ref{theo:inf_transition} so that for all $i \in \Z$, $k_{i+1}-k_{i} < k_{i+2}-k_{i+1} $.
For $j \in K$, set:
\begin{align*}
X_j=\left( \bigcap_{i \equiv 0,1}  Y^0(k_{j_i},\rho_i) \right) \cap \left( \bigcap_{i \equiv -1,2}  Y^1(k_{j_i},\rho_i) \right)
\end{align*}
Let  $x^\ast(j)$ be a minimizer of $J$ on $X_j$, i.e., 
\[
J(x^\ast(j))=\inf_{x \in X_j} J(x).
\]
Let $j^0=(j^0_i)_{i \in \Z}$ be a sequence of $j^0_i=i$.
The previous section deals with the case of $j=j^0$.
It is easily seen that 
if $l \neq m \in K$, then $x^\ast(l)$ and $x^\ast(m)$ are different and
we immediately get the following theorem.
\begin{theo}
Let $\# \chi $ be the number of infinite transition orbits in Theorem  \ref{theo:inf_transition}.
Then $\# \chi = \# \R$.
\end{theo}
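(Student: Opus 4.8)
The plan is to establish the two inequalities $\#\chi \le \#\R$ and $\#\chi \ge \#\R$ and then invoke Cantor--Bernstein. The upper bound is immediate and requires no dynamics: every infinite transition orbit is a point of $\R^\Z$, and since $\#(\R^\Z) = (2^{\aleph_0})^{\aleph_0} = 2^{\aleph_0} = \#\R$, we get $\#\chi \le \#\R$. All of the content is in producing $\#\R$ many \emph{distinct} orbits, which is exactly what the family $\{x^\ast(j)\}_{j \in K}$ is designed to do.

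For the lower bound I would first record that $\#K = \#\R$: an element $k \in K$ is determined by its gap sequence $(k_{i+1}-k_i)_{i \in \Z} \in (\Z_{\ge 1})^\Z$, so $\#K = \aleph_0^{\aleph_0} = 2^{\aleph_0} = \#\R$. Next I would observe that, for the fixed base data $(k,\rho)$ with strictly increasing gaps $k_{i+1}-k_i < k_{i+2}-k_{i+1}$, each $j \in K$ yields the set $X_j$, which has exactly the same structure as $\xkr$ but with the constrained positions $k_{j_i}$ in place of $k_i$. Hence the four-step choice of parameters and Theorem \ref{theo:inf_transition} apply verbatim to $X_j$, producing a stationary configuration $x^\ast(j) \in X_j$ which is an infinite transition orbit, i.e. $x^\ast(j) \in \chi$. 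It then remains to prove that the assignment $j \mapsto x^\ast(j)$ is injective, after which $\#\chi \ge \#K = \#\R$ follows at once.

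Injectivity is the heart of the matter, and it is where the strictly increasing gap condition is used. The mechanism is that $x^\ast(j)$ genuinely realizes the prescribed transition pattern: by Proposition \ref{prop:estimate_delta} and the finiteness of $J(x^\ast(j))$, the minimizer lies within $\delta$ of $u^0$ at the positions $k_{j_i}$ with $i \equiv 0,1$ and within $\delta$ of $u^1$ at those with $i \equiv 2,3$, while the confinement supplied by Proposition \ref{prop:finite} forces each transition (crossing of the level $(u^0+u^1)/2$) to sit in a bounded window, the $t$-th positive one being trapped in $(k_{j_{2t-1}}, k_{j_{2t}})$, with the symmetric statement to the left. Since the base gaps are strictly increasing, the dwelling lengths $k_{j_{i+1}}-k_{j_i}$ are partial sums of a strictly increasing positive sequence and therefore injectively encode $j$; reading these lengths off the sequence of transition windows of $x^\ast(j)$ recovers $j$. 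Concretely, if $l \ne m$, I would take $i_0$ of smallest absolute value with $l_{i_0} \ne m_{i_0}$ (note $i_0 \ne 0$ since $l_0 = m_0 = 0$), use that $k$ is strictly increasing to get $k_{l_{i_0}} \ne k_{m_{i_0}}$, and track the crossing controlled by the window straddling these positions to conclude that $x^\ast(l)$ and $x^\ast(m)$ sit in different wells at a common index, hence differ.

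I expect the injectivity step to be the only real obstacle: one must argue that the minimizer follows the prescribed transition scheme closely enough — transitions confined to bounded windows separated by long dwelling intervals — that distinct subsequences $(k_{j_i})$ cannot collapse to the same configuration. This is precisely where $k_{i+1}-k_i < k_{i+2}-k_{i+1}$ enters, since it rules out the shift-type coincidences that constant gaps would permit. Granting injectivity, combining $\#\chi \ge \#K = \#\R$ with the trivial upper bound $\#\chi \le \#\R$ gives $\#\chi = \#\R$, completing the proof.
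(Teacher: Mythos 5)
Your proposal is correct and follows essentially the same route as the paper: exhibit $\#\R$-many indices $j\in K$ and use the injectivity of $j\mapsto x^\ast(j)$ established in the surrounding discussion (the paper realizes the cardinality bound by encoding decimal digit sequences of reals into the gaps $j_{i+1}-j_i-1$, whereas you compute $\#K=\aleph_0^{\aleph_0}=\#\R$ directly, and you additionally flesh out the injectivity step that the paper dismisses as "easily seen"). One small caution: in your injectivity sketch, Proposition \ref{prop:finite} concerns configurations with $I(x)<\infty$ and so does not literally apply to infinite transition orbits; the confinement you need is already supplied by membership in $X_j$, i.e., by the constraints $x_{k_{j_i}}\in Y^0$ or $Y^1$ together with the strictly increasing gaps of the base sequence $k$.
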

\begin{proof}
For any real number $r \in \R_{>0}$, we can choose a corresponding bi-infinite sequence ${(a_i)}_{i \in \Z} \subset \Z_{\ge 0}$.
(For example,  when $r=12.34$, $a_{-1}=1, a_0=2, a_1=3, a_2=4$ and $a_i =0$ otherwise.)
The proof is straightforward by setting $a_i := j_{i+1} - j_i -1$ for $i \in \Z$.
\end{proof}

 To obtain a more general representation of the variational structure for transition orbits,
set $A = \{A_0,A_1\}$ and:
\begin{align*}
\xkr(A) = \left( \bigcap_{i \in A_0}  Y^0(k_{i},\rho_i) \right) \cap \left( \bigcap_{i \in A_1}  Y^1(k_{i},\rho_i) \right),
\end{align*}
where $A_0$ and $A_1$ are sets so that  $A_0 \cap A_1 = \emptyset$.
For $A$, we set:
\[
\cals:=\cals(A)=\{i \in \Z \mid i \in A_j  \text{\ and \ } i+1 \in A_{|j-1|} \, (j=0 \text{\, or \,}1) \},
\]
and
\[
\tilde{P} := \tilde{P}(\cals) = \left\{ \rho= (\rho_i)_{i \in \Z} \mid 0<\rho_i<(u^1-u^0)/2, \ \sum_{i \in \cals} \rho_i <\infty \right\}.
\]
Clearly, it always holds that $\sum_{i \in \cals} \rho_i <\infty$ if $\# \cals < \infty$.
When $A_0 \cup A_1 = \Z$, for $k \in K$ and $\rho \in\tilde{P}$,
let $\tilde{J} \colon \R^\Z \to \R$ be given by:
\begin{align}
\label{re:actionJ}
\tilde{J}(x) :=\tilde{J}_{k,\rho,A}(x) = \sum_{i \in \Z} B_i(x),
\end{align}
where
\begin{align}
\label{action}
B_i(x)=
\begin{cases}
      \{\sum_{j \in I_i}  h(x_j,x_{j+1})\} - |I_i|c & { i , i+1\in A_0}  \text{\ or  \ } { i , i+1\in A_1}  \\
      \{ \sum_{j \in I_i} h(x_j,x_{j+1}) \} -c_i^{+}& {i \in A_1 \text{\ and \ } i+1 \in A_0}\\
      \{ \sum_{j \in I_i} h(x_j,x_{j+1} ) \} - c_i^{-}&  {i \in A_0 \text{\ and \ } i+1 \in A_1}
\end{cases}.
\end{align}
For example, Section \ref{sec:pf} dealt with the case of $A_0 = \{i \in \Z \mid i \equiv 0,1 \ (\text{mod} \ 4)\}$
and $A_1=\{i \in \Z \mid i \equiv -1,2 \ (\text{mod} \ 4)\}$, so $\cals=\Z$.

 Let $x^\ast(k,\rho,A)$ be a minimizer on $\xkr(A) $, i.e.,
 $x^\ast(k,\rho,A)$ satisfies:
\begin{align*}
\tilj(x^\ast(k,\rho,A))= \inf_{x \in \xkr(A) } \tilj(x).
\end{align*}
It is easily seen that there exists a minimizer  $x^\ast(k,\rho,A)$
by a similar discussion of Proposition \ref{prop:finite} and \ref{prop:min}.
In a similar way of proving in Theorem \ref{theo:inf_transition},
we obtain the following.
\begin{theo}
Assume that $\eqref{gap}$ and there is no $i \in \Z$ such that 
$i-1 \in A_{|j-1|}$, $i \in A_{j}$, and $i+1 \in A_{|j-1|}$ for $j=0$ or $1$.
Then,
for any $\epsilon>0$,
there exist two sequences $k \in K$ and $\rho \in \tilde{P}$ with $\rho_i < \epsilon$ for all $i \in \Z$
such that $x^\ast(k,\rho,A)$ is a stationary configuration
and it has $\# \cals$ transitions.
\end{theo}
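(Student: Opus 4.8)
The plan is to follow the proof of Theorem~\ref{theo:inf_transition} almost verbatim, replacing the periodic pattern $i\equiv 0,1,2,-1\ (\mathrm{mod}\ 4)$ by the general partition $A=\{A_0,A_1\}$ and replacing the summability $\sum_{i\in\Z}\rho_i<\infty$ by the weaker requirement $\rho\in\tilde P$, that is $\sum_{i\in\cals}\rho_i<\infty$. First I would record that $\tilj$ attains its infimum on $\xkr(A)$: the arguments of Lemmas~\ref{lemm:upper}, \ref{lemm:lower} and \ref{lemm:welldefi} and of Proposition~\ref{prop:min} carry over, since $B_i(x)\ge 0$ for $i\in\cals$ by the definition of $c^\pm_i$, while for $i\notin\cals$ the term $B_i(x)=\sum_{j\in I_i}a_j(x)$ is a block sum to which Lemma~\ref{lemm:finite_bdd} applies. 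The only genuinely new point is finiteness of the action on the test sequence analogous to \eqref{test}: on a non-transition block one may take the value identically $u^0$ or $u^1$, so that block contributes $0$ except for the endpoint it shares with an adjacent transition block, whose cost is at most $C\rho$; summing over transitions gives a bound $\le C'\sum_{i\in\cals}\rho_i<\infty$, which is exactly where $\tilde P$ enters.

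Next I would reproduce Steps~$1$--$4$ that precede Theorem~\ref{theo:inf_transition}. Using Remark~\ref{rem:rho_small}, choose $\rho\in\tilde P$ with $\rho_i<\epsilon$ so that the analogues of (p1)--(p2) hold at every transition, namely $u^0+\rho_i\in\tilde I_0$ and $u^1-\rho_i\in\tilde I_1$ at the relevant indices and $\rho_i+\rho_{i+1}<c_\ast/2C$. Then pick $\delta=(\delta_i)$ satisfying (d), apply Proposition~\ref{prop:estimate_delta} to fix the lengths $|I_i|$ of the non-transition blocks together with interior indices $l_i$ at which $x^\ast$ is $\delta_i$-close to the appropriate $u^j$, and finally choose $(\epsilon_i)$ obeying (e1)--(e2) and, via Lemma~\ref{lemm:finite_het}, heteroclinic pieces $x^i\in\calm^i(u^0,u^1)$ fixing the remaining lengths through (k1)--(k2). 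Here the hypothesis that no $i$ satisfies $i-1\in A_{|j-1|}$, $i\in A_j$, $i+1\in A_{|j-1|}$ is used essentially: it forces every maximal run of indices inside $A_0$ or $A_1$ to have length at least two, so that on each side of every transition there is a non-transition block on which $x^\ast$ genuinely flattens near $u^0$ or $u^1$, providing the room required for the gluing below.

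With $k$ and $\rho$ fixed, let $x^\ast=x^\ast(k,\rho,A)$ be the minimizer. Lemmas~\ref{lemm:estimate_periodic1} and \ref{lemm:estimate_periodic2} give $x^\ast_i\notin\{u^0,u^1\}$ for every $i$, so $x^\ast$ is a stationary configuration once we know it does not lie on $\partial\,\xkr(A)$. This is the heart of the proof and proceeds exactly as in Theorem~\ref{theo:inf_transition}: supposing $x^\ast$ saturates a constraint, say $x^\ast_{k_i}=u^0+\rho_i$ for some $i$ with $i-1\in\cals$ (the case $u^1-\rho_i$ being symmetric), I would cut $x^\ast$ on the block $[k_i-l_{i-1},\,k_{i+1}+l_{i+1}]$, use the gap estimate of Lemma~\ref{lemm:gap_estimate} (which requires \eqref{gap}) to bound the excised piece below by $c_0+e^{i-1}$, and replace it by a shifted copy of the heteroclinic $x^{i-1}$ from Step~$3$. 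Lipschitz continuity of $h$ together with (e1)--(e2) then shows that the resulting test configuration $\hat z$ satisfies $\tilj(\hat z)<\tilj(x^\ast)$, a contradiction. Carrying this out simultaneously over all saturated constraints yields $\tilj(\hat z)-\tilj(x^\ast)<-\sum_{i\in\cals}e^i/2<0$, exactly as in the final display of the proof of Theorem~\ref{theo:inf_transition}.

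The main obstacle, and the only genuinely new difficulty beyond Theorem~\ref{theo:inf_transition}, is controlling this cut-and-paste globally when $\cals$ is infinite and the pattern $A$ is aperiodic. I expect to handle it through $\rho\in\tilde P$ together with (e1)--(e2): each transition contributes a fixed gain $e^i/2$, while the replacement costs are dominated by $C(\delta_i+\delta_{i+1})$ and the $\epsilon_i$, which by (e1) are themselves dominated by $e^i$, so that $\sum_{i\in\cals}(2(\delta_i+\delta_{i+1})+\epsilon_i-e^i)$ is a convergent negative series and the global modification strictly lowers $\tilj$. Finally, Proposition~\ref{prop:estimate_delta} and the interior property show that $x^\ast$ comes within $\delta_i$ of $u^0$ on the $A_0$-blocks and within $\delta_i$ of $u^1$ on the $A_1$-blocks without degenerating, so the number of passages between a neighborhood of $u^0$ and a neighborhood of $u^1$ equals the number of switches of the constraint pattern, namely $\#\cals$; the non-degeneracy hypothesis is what rules out spurious transitions at isolated indices and makes this count exact.
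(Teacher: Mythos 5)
Your overall strategy --- minimize $\tilj$ on $\xkr(A)$ and rerun the cut-and-paste argument of Theorem \ref{theo:inf_transition} at every saturated constraint --- matches what the paper does at the constraints adjacent to a transition block, but it leaves a genuine gap at the constraints that are not. The hypothesis of the theorem only forbids runs of length one inside $A_0$ or $A_1$; it allows runs of length three or more (the paper's model example is $A_1=\{1,\dots,n\}$), so there are indices $k_i$ for which both adjacent blocks $I_{i-1}$ and $I_i$ are non-transition blocks. If the minimizer saturates such a constraint, say $x^\ast_{k_i}=u^0+\rho_i$, your surgery is unavailable: there is no adjacent transition whose gap gain $e^i$ (via Lemma \ref{lemm:gap_estimate}) can absorb the gluing cost, and without ruling out these active constraints you cannot conclude that $x^\ast$ satisfies the stationarity equation at $k_i$. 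The paper resolves exactly this point with Lemma \ref{lemm:finite_delta}: for a suitable choice of the interior $k_i$'s, the minimizer of the problem constrained only at the two ends of each run already passes within $\rho_i$ of the relevant $u^j$ at those times, hence coincides with the minimizer of the fully constrained problem (this is the paper's assertion that the minimizers on $\xkr(\{A_0,A_1\})$ and $\xkr(\{A_0,\{1,n\}\})$ agree); the interior constraints are therefore inactive by construction rather than by a variational comparison. Your proof needs this reduction, or a substitute local argument, to be complete.

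Two smaller points. First, the paper treats $\#\cals=1$ separately by a monotonicity argument in the spirit of Yu's Proposition 3.5 combined with Proposition \ref{prop:finite}, and observes that this case requires no gap condition; your route through Lemma \ref{lemm:gap_estimate} still proves the theorem as stated (which assumes \eqref{gap}) but loses that refinement. Second, when you assert that Lemmas \ref{lemm:lower} and \ref{lemm:welldefi} ``carry over'', note that $\tilde P$ only controls $\sum_{i\in\cals}\rho_i$; to bound the non-transition part of $\tilj$ from below you must apply Lemma \ref{lemm:finite_bdd} to each maximal run as a single block, so that only the $\rho$'s at the ends of runs (indexed by $\cals$ and its shift) enter, rather than block by block, which would require summability of $\rho_i$ over all of $\Z$.
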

\begin{proof}
First, we consider $\# \cals=1$.
Only this case does not require $\eqref{gap}$ (but, of course, a neighboring pair $(u^0,u^1)$ is needed).
Set $A_0 =\Z_{\le 0}$, $A_1=\Z_{>0}$,
 $k \in K$ is given by $k_i=i$ \, $(i \in A_0) $  and $k_{i+1} = k_{i}+1$  \, $(i \in A_1)$, and
 $\rho \in \tilde{P}$ is a constant sequence, i.e., $\rho_i = \epsilon_0 < \epsilon$ for all $i \in \Z$.
We define $k_1$ later.
Clearly, $I(x)-\tilj(x)$ is a finite value when  $\# \cals < \infty$, and
 Proposition \ref{prop:finite} implies that $|x^\ast_i - u^0| \to 0$ as $i \to -\infty$ and   $|x^\ast_i - u^1| \to 0$ as $i \to \infty$
 for all $k, \rho$.
 By using $(h_1)$ we can assume that $u^0<x^\ast_0< u^0 + \rho_0$.
 
 Hereafter, we abbreviate $x^\ast$ as $x$.
 We first show that $x$ is strictly monotone for $i \in A_0$, i.e., $x_{i-1} < x_{i}$ for all $i \in \Z$.
 The following proof of monotonicity is similar to Proposition 3.5 of \cite{Yu2022}.
 Assume $x_j=x_{j+1}$ for some $j \in \Z$. Lemma \ref{lemm:estimate_periodic1} implies $x_j \in (u^0,u^1)$. Then $h(x_j,x_j)-c>0$.
Set $\bar{x} = (\cdots, x_{j-1}, x_{j+1}, \cdots)$ satisfying $x_0 = \bar{x}_0$ and $\bar{x} \in \xkr(A)$, then $\tilj(x)>\tilj(\bar{x})$.
Next, we assume that $(x_j -x_{j-1})(x_j -x_{j+1})>0$ for some $j \in \Z$. By using $(h_3)$,
\begin{align*}
&\tilj(x) - \tilj(\bar{x})\\
&> \tilj(x) - (\tilj(\bar{x})  + h(x_j,x_j)-c) \\
&> h(x_{j-1},x_j) + h(x_{j},x_{j+1}) -(h(x_{j+1},x_{j+1}) +h(x_{j},x_j) ) >0,
\end{align*}
which is a contradiction.

Now we check that there is $k_1 \in \Z_{>0}$ satisfying $x_{k_1}> u^1 - \rho_1$. 
We assume that for any $k_1 \in \Z$, $x_{k_1}= u^1 - \rho_1$.
The monotonicity of $x$ implies that
if $x_{k_1}= u^1 - \rho_1$, then
\[
\min_{j \in \{ 0,1\}} |u^j-x_i| \ge \min\{x^0-u^0,\rho_1\} (=:\delta)>0
\] for $i=0, \cdots, k_1$.
Applying Lemma \ref{lemm:yu_lower} and Lipschitz continuity, we obtain:
\begin{align*}
\sum_{i=0}^{k_1-1} a_i(x) \ge k_1 \phi(\delta) - C|x_{k_1}-x_0|.
\end{align*}
The left side goes to infinity as $k_1 \to \infty$, which is a contradiction.
Hence, there is $k_i \in \Z_{>0}$ such that  $x_{k_1}> u^1 - \rho_1$.
Moreover, by Lemma \ref{lemm:estimate_periodic1} and \ref{lemm:estimate_periodic2}, it holds that $x_i \in (u^0,u^1)$ for all $i \in \Z$.

Secondly, we consider the case of $\# \cals \ge 2$.
For example, set $A_0 = \Z \backslash A_1$ and $A_1 = \{1,2,\cdots, n\}$.
Lemma \ref{lemm:finite_delta} means that
for any $\rho \in \tilde{P}$, there exists $k \in K$ such that a minimizer of $\tilj_{k,\rho,\{A_0,A_1\}}$ on $\xkr(\{  A_0,A_1\})$ and $\xkr(\{ A_0, \{1,n\}\})$ are the same.
The case of $A_0 = \Z \backslash A_1$ and  $A_1 = \{1, 2\}$ is shown in a similar way to the proof of Theorem \ref{theo:inf_transition}.
In the same manner, we can show the remaining cases.
\end{proof}

\if0
\section{Other patterns}
We can show the existence ... in a similar way
by replacing $\Z$ with $\Z>0$ and considering
\[
J(x) =  \sum_{i \in Z_{\le 0}} a_i(x) +  \sum_{\Z_{>0}} A_i(x) 
\]
We however fix our proof slightly because

Moreover, the generalized argument implies
\begin{theo}
For any $\alpha \in \Q$
\end{theo}

We can replace $P$ with $P^+$ or $P^-$, where
\begin{align*}
P^+&=\left\{ \rho=(\rho_i)_{i \in \Z} \subset \R_{>0} \mid 0 <\rho_i <1, \ \sum_{i \in \Z_{>0}} \rho_i <\infty   \right\} and\\
P^-&=\left\{ \rho=(\rho_i)_{i \in \Z} \subset \R_{>0} \mid 0 <\rho_i <1, \ \sum_{i \in \Z_{<0}} \rho_i <\infty   \right\}.
\end{align*}
We need to give the assumption of limit.
e.g., when considering $P^+$

\newpage

\section{Examples}
\label{sec:ex}
\subsection{The Frenkel-Kontorova model}
Consider
\[
h(x,y)=\frac{1}{2} \left\{ C(x-y)^2 + V(x) + V(y) \right\}
\]
where $V(x)=V(x+1)$ for all $x \in \R $.
It is called the Frenkel-Kontorova model.
Clearly, $\partial h_1h_2 \le \delta < 0$ and
\begin{align}
\label{rev}
h(x,y) = h(y,x).
\end{align}
Using $\eqref{rev}$, we can easily show
\begin{lemm}
If a continuous function $h \colon \R^2 \to \R^2$ satisfies ($h_1$)-($h_4$) and $\eqref{rev}$,
then all minimizers are $(1,0)$-periodic.
\end{lemm}
\begin{proof}
From $(h_1)$, we can choice $x^\ast$ satisfying $h(x^\ast,x^\ast)=\min_{x \in \R} h(x,x)$.
By contradiction, there is $(x,y) \in \R^2$ such that $x \neq y$ and $h(x,y) < h(x^\ast,x^\ast)$. Then
\[
h(x,y) + h(y,x) < h(x^\ast,x^\ast) + h(x^\ast,x^\ast) \le  h(x,x) + h(y,y),
\]
but it contradicts $(h_3)$.
\end{proof}
The above lemma implies $h(x,y) -c \ge 0$.

\subsection{Billiyard maps}
In this paragraph, we introduce an example not satisfying the twist condition.
Set a domain $D=D(f_1,f_2)$ by
\[
D = \{ (x,y) \in \R^2 \mid f_1(x) \le y \le f_2(x)\}
\]
where
$f_1$ and $f_2 $ are continuous function on $\R^2$.
Consider
\[
h = .
\]
We check that $h$ satisfies $(h_1)$-$(h_5)$.

\newpage

\appendix
\section{Another proof of heteroclinic and homoclinic orbits}
The advantage of using this method is that there is no need to study asymptotic behavior when $|i| \to \infty$.

In this section, we give another existence proof of finite transition orbits.
In \cite{Yu2022}, he considered $X^0$ and $X^1$ which restrict convergence of elements when $|i| \to \infty$ to get heteroclinic configurations.
We will replace  $X^0$ and $X^1$ with  $Y^0$ and $Y^1$ given by
\begin{align*}
Y^0\\
Y^1
\end{align*}

we get infinite transition orbits and they are different from our orbits in Theorem.
\fi


\bibliographystyle{amsplain}
\bibliography{kajihara_reference}

\end{document}